\documentclass[12pt]{article}

\usepackage{amsmath}
\usepackage{amsfonts}
\usepackage{amsthm}
\usepackage{graphicx}
\usepackage[utf8]{inputenc}
\usepackage{color}
\usepackage{multirow}
\usepackage{multicol}
\usepackage{booktabs}
\usepackage{blindtext}
\usepackage{mathrsfs}
\usepackage{amssymb}
\usepackage{tikz}
\usepackage{bm}
\usepackage{float}
\usepackage{graphics}

\usetikzlibrary{calc}
\usepackage{xcolor}
\usepackage[
    colorlinks=true,
    linkcolor=blue,      
    citecolor=green,     
    urlcolor=blue     
]{hyperref}
\newcommand{\ma}{\textit{Mathematica$^{\small{\circledR}}$}}

\date{}

\newtheorem{prop}{Proposition}
\newtheorem{teo}{Theorem}
\newtheorem{cor}{Corollary}
\newtheorem{lema}{Lemma}
\theoremstyle{definition}

\theoremstyle{remark}

\newtheorem{rem}{Remark}
\newenvironment{pf}{\begin{proof}}{\end{proof}}

\begin{document}
\title{{Discrete orthogonal polynomials related to Hahn difference operator}}
\author{J.F. Ma\~{n}as--Ma\~{n}as$^{a,b}$, J.J. Moreno--Balc\'{a}zar$^{a,b}$,\\  M.N. Rebocho$^{c,\ast}$, C. Rodr\'{\i}guez--Perales$^{a,b}$}
\maketitle
{\scriptsize
\noindent $^a$Departamento de Matem\'{a}ticas, Universidad de Almer\'{\i}a, Spain.\\
$^b$Centro de Desarrollo y Transferencia de Investigación Matemática a la Empresa (CDTIME), Spain.\\
$^c$Departamento de Matem\'atica and CMA-UBI, Universidade da Beira Interior, Covilh\~a, Portugal.\\
Email: jmm939@ual.es, balcazar@ual.es,  mneves@ubi.pt$^{*}$, crp170@ual.es \\
*Corresponding author.
}

\begin{abstract}
 This work studies general sequences of discrete orthogonal polynomials on linear lattices, including the so-called discrete Laguerre--Hahn orthogonal polynomials.  We deduce the structure relations involving the orthogonal polynomials and their associated, as well as the explicit form of the fourth--order linear  difference equation satisfied by these Laguerre--Hahn orthogonal polynomials. Furthermore, the particular cases corresponding to semiclassical and classical families of orthogonal polynomials, which lead to second--order difference equations, are also analyzed. Several examples illustrating the computation of the explicit coefficients of these difference equations for various families of orthogonal polynomials are also presented.
\end{abstract}

 \noindent \textbf{Keywords:} Linear lattices; Laguerre--Hahn orthogonal polynomials;    Structure relations; Hahn difference operator.\\

\noindent \textbf{MSC 2020:} 33C45; 33C47; 33D45.

\allowdisplaybreaks[4]

\section{Introduction} \label{sec:1}

 The orthogonal polynomials of a discrete variable have been extensively used in various areas of science, owing to the important role they play in problems arising in Mathematical Physics. For a broader discussion of orthogonal polynomials and their applications, we refer to the books \cite{niki-sus-uv,nik-MIR}; further insights can also be found in \cite{ismail-book,koek}.

In the present paper we focus on the sequences of orthogonal polynomials related to the Hahn difference operator \cite{hahn-q},
\begin{equation}
\left(\mathscr{D}_{q, \omega}f\right)(x)=\begin{cases}\displaystyle\frac{f(qx+\omega)-f(x)}{(q-1)x+\omega}\,, \hspace{0.3cm} \textrm{ if } x \neq \omega_0,\\
f'(\omega_0)\,, \hspace{1.3cm} \textrm{ if } x=\omega_0:=\frac{\omega}{1-q}\,,
\label{eq:hahn-op}
\end{cases}
\end{equation}
where $0<q<1$, $\omega>0$ and the prime stands for the usual derivative.
Throughout the text, for  simplification matters, and when no confusion arises, we shall  simply  denote $\left(\mathscr{D}_{q, \omega}f\right)(x)$ by  $\mathscr{D}_{q, \omega}f(x).$

  A basic and fundamental defining property of $\mathscr{D}_{q, \omega}$ is that it maps a polynomial of degree $n$ to a polynomial of degree $n-1.$ Among such operators, the Hahn  difference operator is the most general one in the class of linear lattices (cf. \cite{magnus-LHsnul,magnus-snul-sc}). Indeed, it includes the forward difference operator $\Delta$ when $q\to 1^{-}$ and $\omega=1$; the Jackson $q$--difference  operator, henceforth denoted by $\mathscr{D}_q$, when $\omega\to0^+$; and the
operator  $\mathscr{D}_{1,\omega}$ \cite{mar-mejri} (therein also called {\it{Hahn operator}}).
 Furthermore, the usual derivative $\frac{d}{dx}$ is recovered when $q \to 1^-$ and $\omega\to0^+$.

 The main purpose of the present work is to study difference--differential relations for orthogonal polynomials related to the Hahn difference operator. Our aim is to provide an unified treatment of the so--called “difference--differentiation” formulas, commonly known as structure relations, and to derive the fourth--order linear difference equations satisfied by these polynomials. Several examples from different settings are presented to illustrate the latter result.

 Structure relations play a fundamental role in the theory of orthogonal polynomials, providing a powerful tool for deriving the difference--differential equations that arise in a wide range of problems in Mathematical Physics. For instance, they  appear in many problems within the theory of polynomial solutions of hypergeometric type difference--differential equations
\cite{niki-sus-uv,nik-MIR}. Higher order difference--differential equations  appear when studying  perturbations of discrete classical orthogonal polynomials, such as in  \cite{foupou-2004}--\cite{foupou-2001}.  For continuous orthogonality on the real line, we refer the reader to the survey \cite{everit} and also \cite{buendia}, which provide an overview of orthogonal polynomial sequences satisfying differential equations of the form
\begin{equation*}
\sum_{j=0}^{N} A_j(x)\frac{d^j y}{dx^j}=0,
\label{eq:eqdifgeral-int}
\end{equation*}
where the coefficients $A_j$ are polynomials. A fundamental result due to Hahn \cite{hahn-ing} states that the order of such an equation can always be reduced to one of the two minimal cases, $N=2$ or $N=4$. The case $N=2$  corresponds to the semiclassical framework, including the classical families, whereas the minimal order for the non--semiclassical Laguerre--Hahn class is $N=4$. Concerning the Laguerre--Hahn class, a recent historical review is given in \cite{2026-Rebocho}.

In the present work, our starting point lies  in the framework of Padé approximation theory. It begins with Stieltjes functions defined via the formal  asymptotic expansion around infinity as
\begin{equation*}
	S(x)=\displaystyle\sum_{n=0}^{+\infty} u_n x^{-n-1}\,,
\label{eq:expan-S}
\end{equation*}
with a suitable sequence of moments $\{u_n\}_{n \geq 0}$, and the corresponding sequences of orthogonal polynomials, say $\{P_n\}_{n \geq 0}$, as well as the sequence of associated polynomials   $\{P^{(1)}_n\}_{n \geq 0}.$
We shall focus on the Stieltjes functions satisfying difference equations with the following form
\begin{equation}
A(\overline{x}) (\mathscr{D}_{q,\omega} S)(x) = B(x)  S(x) S(\overline{x}) + C_1(x) S(x) + C_2(x) S(\overline{x}) + D(x) \,,\label{eq:defiRic-S}
\end{equation}
where  $A, B, C_1, C_2, D$ are polynomials, with $A\not\equiv 0$, and $\overline{x}$ is defined by $\overline{x}=q x + \omega$.

The study of orthogonal polynomials related to difference equations of the type (\ref{eq:defiRic-S}) is well known in the literature, mainly within the Laguerre--Hahn theory (see \cite{fou-paco,ghressi-LH}).
The case  $B \equiv 0$ in (\ref{eq:defiRic-S})  includes the $(q,\omega)$--classical orthogonal polynomials \cite{hahn-q,smaili}, 
 see also \cite{ghressi-2009,kheriji,kheriji-mar,mejri} and
 \cite[Sec. 4.2]{ormerod-etal}, for the operator  $\mathscr{D}_{q}$.

 Our first main result concerns the derivation of the structure relations satisfied by the orthogonal polynomials related to $S$, denoted by $\{P_n\}_{n\geq 0}$, together with their associated polynomials $\{P_n^{(1)}\}_{n\geq 0},$
\begin{equation}
\begin{aligned}
A(\overline{x})(\mathscr{D}_{q,\omega}P_{n+1})(x)&=L_{n}(x)P_{n+1}(x)-C_1(x)P_{n+1}(\overline{x})+\Theta_{n}(x)P_{n}(x)-B(x)P^{(1)}_{n}(\overline{x})\,,\\
A(\overline{x})(\mathscr{D}_{q,\omega}P^{(1)}_{n})(x)&=L_{n}(x)P^{(1)}_{n}(x)+C_2(x)P^{(1)}_{n}(\overline{x})
+\Theta_{n}(x)P^{(1)}_{n-1}(x)+D(x)P_{n+1}(\overline{x})\,,\label{eq:est-intro}
\end{aligned}
\end{equation}
where $L_n, \Theta_n$ are polynomials whose degrees are determined and independent of $n$, and whose coefficients may depend on $n.$

The case $B \equiv 0$ in the above equations  should be emphasized, as those kind of relations are indeed  the generalization to the discrete setting of the structure relations characterizing the semiclassical orthogonal polynomials on the real line \cite{magnus-jcam,maroni}. Similar structure relations can be found in \cite{smaili} for the discrete classical orthogonal polynomials.

The second main result of the paper is  the fourth--order difference equation satisfied by  $\{P_n\}_{n \geq 0}$, which we present here in a comprehensive and self contained approach. The  deduction of such an equation, given in Subsection \ref{sec:5.2},  relies on the structure relations above, their successive derivatives and algebraic elimination techniques, following a similar approach from  \cite{dini} and  \cite{khal-zelia} for the continuous case.
 The fourth--order difference equation has been  obtained  for many  families of perturbed classical discrete orthogonal polynomials, see, for instance,  \cite{foupou-2004}--\cite{foupou-2001}.  In the present paper, we consider the fourth--order difference equation in its full generality. In addition to a compact representation in terms of a third--order determinant, we provide explicit formulas for computing its coefficients.

The remainder of the paper is organized as follows. In Section~\ref{sec:2} we give the main background to be used in the sequel: the Hahn  difference calculus and basic topics on orthogonal polynomials. Section~\ref{sec:3} contains the first--order difference equation for the Stieltjes transforms of  semiclassical weights.

The structure relations (\ref{eq:est-intro}) are deduced in Section \ref{sec:4}. Fundamental relations for the polynomials $L_n, \Theta_n$, based on the compatibility of the structure relations written in the matrix form combined with the recurrence relations,  are deduced in Subsection \ref{sec:4.1}.
 Section~\ref{sec:5} is devoted to the deduction  of the fourth--order difference equation for the sequences of orthogonal polynomials related to Stieltjes functions satisfying (\ref{eq:defiRic-S}). We highlight the Theorem \ref{teo3},  which is a key result to prove the fourth--order difference equation. This result provides a difference--differential equation involving the orthogonal polynomials $\{P_n\}_{n \geq 0}$ related to (\ref{eq:defiRic-S}) and their associated polynomials.
The particular cases $B \equiv 0$ therein, for semiclassical and classical orthogonal polynomials,  are discussed, and the hypergeometric--type equation is   recovered in  Corollary \ref{cor-2ndorder}. The fourth--order difference equation is deduced in Theorem \ref{teo4},  while the explicit expressions of its coefficients are given in Appendix~\ref{Appendix}.  Section~\ref{s-examples} shows  examples in the three different settings related to the derivative, forward and $q$--difference operators, with the coefficients of the corresponding difference equation explicitly provided.

\section{Notations and background:  Hahn difference operator and orthogonal polynomials}\label{sec:2}

\subsection{Hahn  difference calculus}

In this subsection we will briefly recall some of the basic properties of the Hahn  difference operator and the $(q,\omega)$--integral calculus. Further details can be found, e.g., in \cite{anaby,Filipuketal,hahn-q}.

As mentioned previously, throughout the text we shall use the notation $\overline{x}=qx+\omega$, thus, when convenient, the Hahn difference operator
 can be read as
 \begin{equation*}
(\mathscr{D}_{q,\omega}f)(x)=\frac{f(\overline{x})-f(x)}{\overline{x}-x}\,.
\end{equation*}

The chain rule applies, thus (cf. \cite[Eq. 2]{Filipuketal})
\begin{equation*}
\left(\mathscr{D}_{q,\omega}f(\overline{x})\right)(x)=q \left(\mathscr{D}_{q,\omega}f\right)(\overline{x})\,.
\label{chain rule}
\end{equation*}
For our purposes, let us define
$$\overline{x}^{\;-1}:=\frac{x-\omega}{q}, \quad  q \neq 0. $$

Basic properties of Hahn difference  operators relating $\mathscr{D}_{q,\omega}$ with $\mathscr{D}_{\frac{1}{q},\frac{-\omega}{q}}$  are listed in the following lemma. The proofs will be omitted, as they follow straightforwardly from (\ref{eq:hahn-op}). We also denote $(\mathscr{D}^n_{q,\omega}f)(x)=(\mathscr{D}_{q,\omega}(\mathscr{D}^{n-1}_{q,\omega}f)(x))(x)$.

\begin{lema}
Let $q \neq 0. $
The following properties hold:
\begin{align}
\left(\mathscr{D}_{q,\omega}f\right)(\overline{x}^{\;-1})&=\left(\mathscr{D}_{\frac{1}{q},\frac{-\omega}{q}}f\right)(x),\notag\\
\left(\mathscr{D}^2_{q,\omega}f\right)(\overline{x}^{\;-1})&=\left(\mathscr{D}_{\frac{1}{q},\frac{-\omega}{q}}\mathscr{D}_{q,\omega}f\right)(x),\label{eq:D2-xbarinv}\\
\left(\mathscr{D}_{\frac{1}{q},\frac{-\omega}{q}}\mathscr{D}_{q,\omega}f\right) (x)&= q\, \left(\mathscr{D}_{q,\omega}\mathscr{D}_{\frac{1}{q},\frac{-\omega}{q}}f\right) (x). \label{eq:propcomuta}
\end{align}
\end{lema}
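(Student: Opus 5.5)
The plan is a direct computation from the definition (\ref{eq:hahn-op}), working at generic points $x\neq\omega_0$ and recovering the fixed point by continuity (or by the derivative convention) at the end. The key observation is that the lattice map $x\mapsto \overline{x}=qx+\omega$ and its inverse $x\mapsto \overline{x}^{\;-1}=(x-\omega)/q$ are themselves Hahn-type maps. Indeed, the map attached to $\mathscr{D}_{\frac{1}{q},\frac{-\omega}{q}}$ sends $x$ to $\frac{1}{q}x-\frac{\omega}{q}=\overline{x}^{\;-1}$. Both maps have the same fixed point $\omega_0=\omega/(1-q)$, so the exceptional points coincide.

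For the first identity, evaluate the difference quotient of $\mathscr{D}_{q,\omega}$ at $y=\overline{x}^{\;-1}$. The shifted point is $\overline{y}=x$, so
$$(\mathscr{D}_{q,\omega}f)(\overline{x}^{\;-1})=\frac{f(x)-f(\overline{x}^{\;-1})}{x-\overline{x}^{\;-1}}=\frac{f(\overline{x}^{\;-1})-f(x)}{\overline{x}^{\;-1}-x}.$$
This is exactly $(\mathscr{D}_{\frac1q,\frac{-\omega}{q}}f)(x)$. At $x=\omega_0$ we have $\overline{x}^{\;-1}=\omega_0$, and both sides equal $f'(\omega_0)$.

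The second identity follows by applying the first to the function $g=\mathscr{D}_{q,\omega}f$, since $\mathscr{D}^2_{q,\omega}f=\mathscr{D}_{q,\omega}g$.

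For (\ref{eq:propcomuta}), expand both sides explicitly. Write $h=\overline{x}-x=(q-1)x+\omega$, and note that $\overline{x}^{\;-1}-x=-h/q$. By the first identity, the left side is $(\mathscr{D}_{q,\omega}g)(\overline{x}^{\;-1})$, which expands as
$$\frac{g(x)-g(\overline{x}^{\;-1})}{h/q}.$$
Substituting $g(x)=\frac{f(\overline{x})-f(x)}{h}$ and $g(\overline{x}^{\;-1})=\frac{f(x)-f(\overline{x}^{\;-1})}{h/q}$ gives an explicit combination of $f(\overline{x})$, $f(x)$ and $f(\overline{x}^{\;-1})$ with coefficients in powers of $q/h$. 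On the right side, set $k=\mathscr{D}_{\frac1q,\frac{-\omega}{q}}f$. Then
$$q(\mathscr{D}_{q,\omega}k)(x)=q\,\frac{k(\overline{x})-k(x)}{h}.$$
Here $k(\overline{x})$ uses the inverse map at $\overline{x}$, namely $(\overline{x})^{-1}=x$. Hence $k(\overline{x})=\frac{f(x)-f(\overline{x})}{x-\overline{x}}=\frac{f(\overline{x})-f(x)}{h}$, and $k(x)=\frac{f(\overline{x}^{\;-1})-f(x)}{-h/q}$. Collecting the coefficients of the three values of $f$ on each side should give the same expression, namely
$$\frac{q\,[f(\overline{x})-f(x)]-q^2[f(x)-f(\overline{x}^{\;-1})]}{h^2}.$$
The fixed point $x=\omega_0$ is then handled by continuity, or by the corresponding second-derivative limit.

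The only real obstacle is bookkeeping. One must track which increment ($h$ versus $-h/q$) appears in each quotient, and make sure the factor $q$ in (\ref{eq:propcomuta}) comes out correctly rather than $q^{-1}$. I would check that last factor on the test function $f(x)=x^2$ before finalizing.
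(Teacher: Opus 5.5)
Your proposal is correct and is exactly the direct computation from the definition (\ref{eq:hahn-op}) that the paper has in mind; the paper omits the proof as straightforward. This includes your observation that $\mathscr{D}_{\frac{1}{q},\frac{-\omega}{q}}$ is the difference quotient along the inverse map $x\mapsto\overline{x}^{\;-1}$, which has the same fixed point $\omega_0$. Your two expansions of (\ref{eq:propcomuta}) do coincide, giving $\bigl(q[f(\overline{x})-f(x)]-q^2[f(x)-f(\overline{x}^{\;-1})]\bigr)/h^2$ for $x\neq\omega_0$ (where also $\overline{x},\overline{x}^{\;-1}\neq\omega_0$), so the factor is indeed $q$ and the hedging is unnecessary.
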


We shall make use of the following {\it{invariance}} type property,
\begin{equation}
f(\overline{x})=f(x)+\varsigma(x)\mathscr{D}_{q,\omega}f(x)\,, \quad \varsigma(x)=(q-1)x+\omega.\label{eq:inv}
\end{equation}
Consequently, as $\varsigma(\overline{x})=q\,\varsigma(x)$, there follows
\begin{equation*}
f(\overline{\overline{x}})=f(x)+(1+q)\,\varsigma(x)\mathscr{D}_{q,\omega}f(x)+q \,\varsigma^2(x)\mathscr{D}^2_{q,\omega}f(x)\,,
\end{equation*}
where $\overline{\overline{x}}$ is defined recursively by $\overline{\overline{x}}=q \overline{x}+\omega.$

\begin{prop}[\cite{anaby}]
 Let $f,g: I\rightarrow\mathbb{R}$ be $(q,\omega)$--differentiable, then the following product and quotient rules hold:
\begin{enumerate}
\item Product Rule:
\begin{equation*}
 \mathscr{D}_{q, \omega}(f g)(x)=g(x) \mathscr{D}_{q, \omega} f(x)+f(\overline{x})\mathscr{D}_{q, \omega} g(x).
\end{equation*}
\item Quotient Rule:
\begin{equation*}
\mathscr{D}_{q, \omega}\left(\frac{f}{g}\right)(x)=\frac{g(x)\mathscr{D}_{q, \omega}f(x)-f(x)\mathscr{D}_{q, \omega}g(x)}{g(x)g({\overline{x}})},
\end{equation*}
assuming $g(x)g(\overline{x})\neq 0.$
\end{enumerate}
\end{prop}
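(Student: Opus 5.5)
Both rules follow from the definition of $\mathscr{D}_{q,\omega}$ in its form $(\mathscr{D}_{q,\omega}f)(x)=\frac{f(\overline{x})-f(x)}{\overline{x}-x}$, valid for $x\neq\omega_0$. The idea is to add and subtract a mixed term in the numerator. The exceptional point $x=\omega_0$, where $\overline{x}=x$, is handled separately: there the operator is the ordinary derivative, and the classical Leibniz and quotient rules apply. Note that $f(\overline{\omega_0})=f(\omega_0)$, so the formulas reduce to the usual ones at that point.

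For the product rule, with $x\neq\omega_0$, write
\[
f(\overline{x})g(\overline{x})-f(x)g(x)=f(\overline{x})\bigl(g(\overline{x})-g(x)\bigr)+g(x)\bigl(f(\overline{x})-f(x)\bigr).
\]
Dividing by $\overline{x}-x$ gives $\mathscr{D}_{q,\omega}(fg)(x)=f(\overline{x})\mathscr{D}_{q,\omega}g(x)+g(x)\mathscr{D}_{q,\omega}f(x)$, which is the stated formula.

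For the quotient rule, put everything over a common denominator:
\[
\frac{f(\overline{x})}{g(\overline{x})}-\frac{f(x)}{g(x)}=\frac{f(\overline{x})g(x)-f(x)g(\overline{x})}{g(x)g(\overline{x})}.
\]
Then add and subtract $f(x)g(x)$ in the numerator:
\[
f(\overline{x})g(x)-f(x)g(\overline{x})=g(x)\bigl(f(\overline{x})-f(x)\bigr)-f(x)\bigl(g(\overline{x})-g(x)\bigr).
\]
Dividing by $\overline{x}-x$ yields the claim, provided $g(x)g(\overline{x})\neq0$.

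Alternatively, the quotient rule can be derived from the product rule applied to $f=(f/g)\cdot g$ and then solved for $\mathscr{D}_{q,\omega}(f/g)$. This needs $g(x)\neq0$, and the identity $(f/g)(\overline{x})=f(\overline{x})/g(\overline{x})$ produces the factor $g(\overline{x})$ in the denominator.

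There is no real obstacle here. The only care needed is at $x=\omega_0$: there one uses the $(q,\omega)$-differentiability hypothesis, which means ordinary differentiability at $\omega_0$, and the fact that $\overline{\omega_0}=\omega_0$, so both sides agree with the classical rules.
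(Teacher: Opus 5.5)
Your argument is correct and is the standard one: the paper gives no proof of its own but cites \cite{anaby}, where both rules follow in the same way, by splitting the numerator of the difference quotient for $x\neq\omega_0$ and using the classical Leibniz and quotient rules at the fixed point $\overline{\omega_0}=\omega_0$. One small caveat on your alternative route: applying the product rule to $(f/g)\cdot g$ in the order you describe gives the numerator $g(\overline{x})\mathscr{D}_{q,\omega}f(x)-f(\overline{x})\mathscr{D}_{q,\omega}g(x)$, which matches the stated one only after an extra use of $f(\overline{x})=f(x)+\varsigma(x)\mathscr{D}_{q,\omega}f(x)$ (the cross terms cancel), or if you instead apply the product rule with $g$ as the shifted factor.
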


The inverse of the operator $\mathscr{D}_{q, \omega}$ gives rise to the $(q,\omega)$--integral, defined as follows.
For a function  $f:I\rightarrow\mathbb{R}$
in some interval  on $\mathbb{R}$, and $a,b\in I$, the $(q,\omega)$--integral of $f$ from $a$ to $b$ is given by
\begin{equation}
\label{q omega integral definition-1}
\int_{a}^bf(t)d_{q,\omega}t:=\int_{\omega_0}^b f(t)d_{q,\omega}t-\int_{\omega_0}^a f(t)d_{q,\omega}t,
\end{equation}
where
\begin{equation}
\label{q omega integral definition}
\int_{\omega_0}^xf(t)d_{q,\omega}t:=\big(x(1-q)-\omega\big)\sum_{k=0}^\infty q^k f\left(xq^k+\omega[k]_q\right), \quad x\in I,
\end{equation}
and $[k]_q:=\frac{1-q^k}{1-q},$
provided that the series converges at $x=a$ and $x=b$. In this case, $f$ is called $(q,\omega)$--integrable on $[a,b]$.
The sum on the right--hand side of~(\ref{q omega integral definition}) is known as  Jackson--N\"{o}rlund sum (see, e.g., \cite[Sec. 2]{Filipuketal}).

The $(q,\omega)$--integration by parts is given in the following statement (see \cite{anaby}).

\begin{lema} If $f,g: I\rightarrow\mathbb{R}$ are continuous at $\omega_0$, then
\begin{equation}
\label{eq:ppp}
\int_a^b f(x) \mathscr{D}_{q, \omega}g(x) d_{q, \omega} x=\left[f(x) g(x)\right]_a ^b-\int_a^b \mathscr{D}_{q, \omega} f(x) g(\overline{x}) d_{q, \omega}x, \quad a,b\in I,
\end{equation}
where we denote $[f(x)]_a^b=f(b)-f(a).$
\end{lema}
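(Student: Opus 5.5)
The plan is to prove (\ref{eq:ppp}) by $(q,\omega)$--integrating the product rule. Put $h=fg$. Reading the product rule with the roles of the factors chosen appropriately gives
\[
\mathscr{D}_{q,\omega}(fg)(x)=f(x)\,\mathscr{D}_{q,\omega}g(x)+g(\overline{x})\,\mathscr{D}_{q,\omega}f(x),
\]
which is checked directly from the definition: $f(\overline{x})g(\overline{x})-f(x)g(x)=f(x)\bigl(g(\overline{x})-g(x)\bigr)+g(\overline{x})\bigl(f(\overline{x})-f(x)\bigr)$, then divide by $\overline{x}-x$. Integrating both sides from $a$ to $b$ and using linearity of the $(q,\omega)$--integral, the claim reduces to a fundamental theorem of calculus for the Hahn operator,
\[
\int_a^b \mathscr{D}_{q,\omega}h(x)\,d_{q,\omega}x=h(b)-h(a).
\]

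By definition (\ref{q omega integral definition-1}), it suffices to show $\int_{\omega_0}^x \mathscr{D}_{q,\omega}h(t)\,d_{q,\omega}t=h(x)-h(\omega_0)$ for $x\in I$; the constants $h(\omega_0)$ then cancel in the difference. To prove this, I will write $x_k=xq^k+\omega[k]_q$. These points satisfy $x_{k+1}=qx_k+\omega=\overline{x_k}$, and
\[
x_k-x_{k+1}=q^k\bigl(x(1-q)-\omega\bigr).
\]
Therefore the weight $\bigl(x(1-q)-\omega\bigr)q^k$ in (\ref{q omega integral definition}) equals $-(\overline{x_k}-x_k)$ exactly. It follows that each term of the Jackson--N\"orlund sum for $\mathscr{D}_{q,\omega}h$ is $h(x_k)-h(x_{k+1})$. (At $x=\omega_0$ all terms vanish, which is consistent.)

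The partial sums telescope to $h(x)-h(x_{N+1})$. Since $0<q<1$, we have $x_{N+1}=\omega_0+q^{N+1}(x-\omega_0)\to\omega_0$. This is the only delicate step: passing to the limit requires continuity of $h=fg$ at $\omega_0$, which follows from the hypothesis that $f,g$ are continuous at $\omega_0$. Convergence of the series is then automatic, because the partial sums converge. One can likewise note that the two integrals on the right of (\ref{eq:ppp}) exist whenever the left one does, since their sum is a convergent telescoping series.

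Combining the pieces gives
\[
\int_a^b f\,\mathscr{D}_{q,\omega}g\,d_{q,\omega}x+\int_a^b g(\overline{x})\,\mathscr{D}_{q,\omega}f\,d_{q,\omega}x=[fg]_a^b,
\]
which rearranges to (\ref{eq:ppp}).
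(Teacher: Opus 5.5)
Your proof is correct: integrating the product rule $\mathscr{D}_{q,\omega}(fg)=f\,\mathscr{D}_{q,\omega}g+g(\overline{x})\,\mathscr{D}_{q,\omega}f$, then evaluating $\int_a^b\mathscr{D}_{q,\omega}(fg)\,d_{q,\omega}x$ by telescoping the Jackson--N\"orlund sum (using $x_{k+1}=\overline{x_k}$ and $x_{N+1}\to\omega_0$, with continuity of $fg$ at $\omega_0$), is the standard argument. The paper gives no proof of this lemma and simply quotes it from \cite{anaby}, where it is derived the same way, from the fundamental theorem of Hahn calculus together with the product rule.
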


\subsection{Orthogonal polynomials}

Let $\{u_n\}_{n \geq 0}$ be a sequence  of moments (where we take, without loss of generality,  $u_0=1$),  such that
    the Hankel determinants $$H_n:=\det[u_{j+k-2}]_{j, k=1, \dots, n}\,, \quad H_0:=1\,, $$ satisfy
     \begin{equation*}H_n \neq 0,\;\; n \geq 0\,.\label{eq:hankel}\end{equation*}
In this context, we can define the Stieltjes function as a formal power series
\begin{equation}
	S(x)=\displaystyle\sum_{n=0}^{+\infty}u_n x^{-n-1}\,.
\label{eq:expan-S}
\end{equation}
 The sequence of monic orthogonal polynomials related to $S$,
$$
P_n(x)=x^n+\cdots,\qquad n\geq0,
$$
is the sequence of diagonal Pad\'e denominators of the
rational approximants to (\ref{eq:expan-S}). We denote the corresponding
approximants by $P_n^{(1)}/P_{n+1}$, $n\geq0$, which are uniquely
determined by
\begin{equation}
P_{n+1}(x)S(x)-P_n^{(1)}(x)
=\mathcal{O}(x^{-n-2}),
\qquad n\geq0,\qquad x\to\infty.
\label{eq:Pade}
\end{equation}
The numerator polynomials $P_n^{(1)}$, which satisfy
$\deg P_n^{(1)}=n$, are commonly referred to as the associated
polynomials (see, e.g., \cite{szego,VA-lecnotes}). Throughout this work,
we also take the associated polynomials to be monic, that is,
$$
P_n^{(1)}(x)=x^n+\cdots,\qquad n\geq0.
$$

Equivalently, given the moments $\{u_n\}_{n \geq 0}$ under the above stated conditions, a linear functional $L$ can be defined on the space of polynomials by
$$u_n=L[x^n]\,, \quad n \geq 0\,.$$ Then, any solution of the Padé approximation (\ref{eq:Pade}) satisfies the orthogonality condition
\begin{equation}L[P_n(x)P_m(x)]=h_n \delta_{n,m}\,, \quad n, m = 1, 2,  \label{eq:ort}\end{equation}
that is, $\{P_n\}_{n \geq 0}$ is a sequence of orthogonal polynomials with respect to the linear functional~$L$ (see \cite{chihara,szego}).

Linear functionals related to discrete orthogonal polynomials are usually specified by a sum with a certain weight over a discrete set of lattice points. Tables with the most common weights in the literature can be collected by looking at the monographs \cite{ismail-book,koek}. In the case under consideration, whenever there exists  integral representations in terms of some weight, say $\varrho$, supported on some lattice of points $I$, the linear functional $L$ is defined via the $(q,\omega)$--integral given by (\ref{q omega integral definition-1}) and (\ref{q omega integral definition}),
\begin{equation*}
L[x^n]=\int_Ix^n \varrho(x)d_{q,\omega}x\,.
\end{equation*}
In such a case, the orthogonality condition (\ref{eq:ort}) is
$$\int_I P_n(x)P_m(x)\varrho(x)d_{q,\omega}x=h_n \delta_{n,m}\,, \quad n, m = 1, 2, \dots\, ,
$$
and $S$ given by~(\ref{eq:expan-S}) is the Stiletjes transform of $\varrho$,
\begin{equation}S(x)=\displaystyle\int_{I}\frac{\varrho(y)}{x-y}d_{q,\omega}y\,.\label{eq:S-transform}
\end{equation}

Sequences of monic orthogonal polynomials (SMOP), denoted as $\{P_n\}_{n \geq 0}$, satisfy  a three--term recurrence
relation given by \cite{szego}
\begin{equation}
P_{n+1}(x)=(x-\beta_n)P_n(x)-\gamma_n P_{n-1}(x)\,,\quad
n=0,1,2,...\,, \label{eq:ttrr-Pn}
\end{equation} with $P_{-1}(x)=0, \;
P_0(x)=1,$ and $\gamma_n\neq 0, \; n \geq 1, \; \gamma_0=u_0=1$.

The sequence  $\{P_n^{(1)}\}_{n\geq 0}$
satisfies the three--term recurrence relation
\begin{equation}
P^{(1)}_{n+1}(x)=(x-\beta_{n+1})P^{(1)}_{n}(x)-\gamma_{n+1}P^{(1)}_{n-1}(x)\,,
\quad n=0,1,2,...\,, \label{eq:ttrr-Pn1}
\end{equation} with  $P^{(1)}_{-1}(x)=0, \;P^{(1)}_0(x)=1$.

 We remark that the sequence of associated polynomials  $\{P_n^{(1)}\}_{n\geq 0}$ is, itself, a sequence of orthogonal polynomials (the result follows, e.g.,  by Favard's Theorem \cite[Th. 4.4]{chihara}). The  Stieltjes function of $\{P_n^{(1)}\}_{n\geq 0}$, henceforth denoted by $S_1,$ and the Stieltjes function $S$ of $\{P_n\}_{n \geq 0}$ are related through the following formula \cite{stieltjes}:
 \begin{equation*}
S(x)=\frac{1}{x-\beta_0-\gamma_1 S_1(x)}\,.
\end{equation*}

Furthermore, if $S$ satisfies an equation of type (\ref{eq:defiRic-S}), then standard computations using the previous relation yield that $S_1$ also satisfies an equation of type~(\ref{eq:defiRic-S}). In particular, if we denote by $A^{(1)}$, $B^{(1)}$, $C_1^{(1)}$, $C_2^{(1)}$, $D^{(1)}$ the coefficients of the equation for $S_1$, then, the following relations with the coefficients for $S$ hold
\begin{align}
A^{(1)}(x)&=A(x),\quad B^{(1)}(x)=\gamma_1 D(x),\label{A1rel} \\ C^{(1)}_1(x)&=-C_2(x)-(\overline{x}-\beta_0)D(x),\quad  C^{(1)}_2(x)=-C_1(x)-(x-\beta_0)D(x),\\
D^{(1)}(x)&=\frac{1}{\gamma_1}\big(A(\overline{x})+B(x)+(\overline{x}-\beta_0)C_1(x)+(x-\beta_0)C_2(x)+(x-\beta_0)(\overline{x}-\beta_0)D(x)\big).\label{D1rel}
\end{align}

For future purposes, we introduce the following  matrix $\mathcal{P}_n$:
\begin{equation}
\mathcal{P}_n=\left[
\begin{matrix}
  P_{n+1} & P_n^{(1)} \\
  P_n & P_{n-1}^{(1)} \\
\end{matrix}
\right], \quad n\geq 0\,.\label{eq:calP}
\end{equation}
Gathering (\ref{eq:ttrr-Pn}) and (\ref{eq:ttrr-Pn1}), $\mathcal{P}_n$ satisfies the following relation
\begin{equation}\mathcal{P}_n=\mathcal{M}_n\mathcal{P}_{n-1}\,,\;\;\;
\mathcal{M}_n=\left[
\begin{array}{cc}
  x-\beta_n & -\gamma_n \\
  1 & 0 \\
\end{array}
\right]\,,\quad n\geq 1\,. \label{eq:rrcalP}\end{equation}
 Taking determinants to (\ref{eq:rrcalP}) and iterating  we obtain
\begin{equation}
P_n^{(1)}(x)P_n(x)-P_{n+1}(x)P_{n-1}^{(1)}(x)=\prod_{k=1}^{n}\gamma_k\,, \quad n\geq 1\,.\label{eq:liou}
\end{equation}

\section{Semiclassical discrete weights and Stieltjes functions}
\label{sec:3}

In this section  we will  focus on the $(q,\omega)$--semiclassical weights  (cf. \cite{smaili}), defined via a Pearson equation
\begin{equation*}
\mathscr{D}_{q,\omega}(A \varrho)(x)=\Psi(x)\varrho(x),
\end{equation*}
or, equivalently, in the log--derivative format,
\begin{equation}
A(\overline{x})(\mathscr{D}_{q,\omega}\varrho)(x)=C(x)\varrho(x)\,, \quad C(x)=\Psi(x)-(\mathscr{D}_{q,\omega}A)(x),\label{eq:log}
\end{equation}
 where $A(\overline{x})$ and $C(x)$ are irreducible polynomials. An alternative form is \cite[Eq. (16)]{Filipuketal} (recalling the notation $\overline{x}=qx+\omega$)
 $$\mathscr{D}_{q,\omega}\varrho(x)=-u(\overline{x})\varrho(\overline{x})\,,$$ which, by virtue of (\ref{eq:inv}),  is equivalent to (\ref{eq:log}) under the correspondence $A(\overline{x})=1+\varsigma(x) u(\overline{x}), $ $ C(x)=-u(\overline{x}).$

We will   use the following definition: given a weight $\varrho$ with moments $({\varrho}_n)_{n \geq 0},$ and  a polynomial $P(x)=\sum_{k=0}^{\deg(P)}p_kx^k$, the polynomial $\pi(P;x):=\pi(P,\varrho;x)$ is  given by
\begin{equation}
\pi(P;x)=\sum_{k=0}^{\deg(P)-1}\left(\sum_{j=k+1}^{\deg(P)} p_j \varrho_{j-k-1}\right)x^k\,. \label{eq:pol-aux}
\end{equation}

\begin{lema}\label{lemma:ric-sc}
Let $\varrho$ be a semiclassical weight satisfying the Pearson equation \begin{equation}
\mathscr{D}_{q,\omega}(A \varrho)(x)=\Psi{(x)}\varrho{(x)}\,,\label{eq:Pearson}
\end{equation}
with $\lim_{x \to a, b}A(x) \varrho(x)=0$ at the endpoints of the support of $\varrho$.
Then, the Stieltjes transform of the weight $\varrho$, defined by (\ref{eq:S-transform}),
satisfies the difference equation
\begin{equation}
A(\overline{x}) (\mathscr{D}_{q,\omega} S)(x) = C(x) S(x) +  D(x)\,,\label{eq:ric-S-sc}
\end{equation}
where
\begin{equation*}
C(x)=\Psi(x)-(\mathscr{D}_{q,\omega}A)(x)\,, \quad D(x)=(\mathscr{D}_{q,\omega}\pi(A;x))(x)-\pi(\Psi;x)\,, 
\end{equation*} and the polynomial $\pi (\cdot; x)$ is defined in accordance with (\ref{eq:pol-aux}) taking the moments of the  weight~$\varrho$.
\end{lema}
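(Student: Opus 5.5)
The plan is to compute $\mathscr{D}_{q,\omega}$ of $S$ from its integral representation, multiply by $A(\overline{x})$, and use the Pearson equation (\ref{eq:Pearson}) together with $(q,\omega)$--integration by parts (\ref{eq:ppp}). The terms that cannot be absorbed into multiples of $S$ will be shown to be polynomials, namely the $\pi(\cdot;x)$ expressions of (\ref{eq:pol-aux}).

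First, I will record the identity that generates $\pi$. For a polynomial $P$,
\[
\int_I \frac{P(x)-P(y)}{x-y}\varrho(y)\,d_{q,\omega}y=\pi(P;x),
\]
because $\frac{x^j-y^j}{x-y}=\sum_{k=0}^{j-1}x^k y^{j-1-k}$ and $\int_I y^m\varrho(y)\,d_{q,\omega}y=\varrho_m$. Consequently
\[
P(x)S(x)=\int_I\frac{P(y)\varrho(y)}{x-y}\,d_{q,\omega}y+\pi(P;x).
\]

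I will apply this with $P=\Psi$. The Pearson equation gives
\[
\int_I \frac{\Psi(y)\varrho(y)}{x-y}\,d_{q,\omega}y=\int_I \frac{\mathscr{D}_{q,\omega}(A\varrho)(y)}{x-y}\,d_{q,\omega}y .
\]
I then integrate by parts with $g=A\varrho$ and $f(y)=1/(x-y)$. The boundary terms vanish by the hypothesis $A\varrho\to0$ at $a,b$. Using the quotient rule,
\[
\mathscr{D}_{q,\omega,y}\frac{1}{x-y}=\frac{1}{(x-y)(x-\overline{y})},
\]
so this integral equals $-\int_I \frac{A(\overline{y})\varrho(\overline{y})}{(x-y)(x-\overline{y})}\,d_{q,\omega}y$.

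The shifted point $\overline{y}$ must be converted back to the lattice variable. Here I will use the invariance of the Jackson--N\"orlund sum under the shift $y\mapsto\overline{y}$, which is equivalent to reindexing the sum in (\ref{q omega integral definition}). The substitution $t=\overline{y}$ gives $d_{q,\omega}y=q^{-1}d_{q,\omega}t$ and $x-y=(\overline{x}-t)/q$, hence
\[
\frac{1}{(x-y)(x-t)}\,d_{q,\omega}y=\frac{1}{(\overline{x}-t)(x-t)}\,d_{q,\omega}t ,
\]
and the integral becomes $-\int_I\frac{A(t)\varrho(t)}{(x-t)(\overline{x}-t)}\,d_{q,\omega}t$, up to boundary contributions killed by the endpoint hypothesis. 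Partial fractions give
\[
\frac{1}{(x-t)(\overline{x}-t)}=\frac{1}{\overline{x}-x}\left(\frac{1}{x-t}-\frac{1}{\overline{x}-t}\right).
\]
Using the displayed identity with $P=A$ at the points $x$ and $\overline{x}$, the integral equals
\[
-\mathscr{D}_{q,\omega}\bigl(A S-\pi(A;\cdot)\bigr)(x).
\]

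By the product rule, $\mathscr{D}_{q,\omega}(AS)(x)=S(x)\,\mathscr{D}_{q,\omega}A(x)+A(\overline{x})\,\mathscr{D}_{q,\omega}S(x)$. Collecting terms gives
\[
\Psi S-\pi(\Psi;x)=-S\,\mathscr{D}_{q,\omega}A-A(\overline{x})\,\mathscr{D}_{q,\omega}S+\mathscr{D}_{q,\omega}\pi(A;x).
\]
This is (\ref{eq:ric-S-sc}) up to sign conventions, with $C=\Psi-\mathscr{D}_{q,\omega}A$ and $D=\mathscr{D}_{q,\omega}\pi(A;x)-\pi(\Psi;x)$.

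As written, this computation carries the wrong sign on some terms. I will therefore recheck the orientation of the integration-by-parts step, in particular the sign of $\mathscr{D}_{q,\omega,y}\frac1{x-y}$. I will also check whether the lattice convention forces working with $\overline{y}^{\,-1}$ and $\mathscr{D}_{\frac1q,\frac{-\omega}{q}}$, using the earlier lemma on these operators, so that the outcome matches the stated $C$ and $D$.

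The main obstacle is the change of variables $t=\overline{y}$ inside the $(q,\omega)$--integral over $I$. I need to justify that the shifted lattice sum coincides with the original one apart from endpoint terms, and that those endpoint terms vanish by $\lim A\varrho=0$. I will first try to verify this directly from the series (\ref{q omega integral definition}) by shifting the index $k\mapsto k+1$.
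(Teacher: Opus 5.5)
Your approach is sound, but the proposal does not close as written. The sign problem is not a matter of convention, and it is not where you plan to look. The kernel derivative $\mathscr{D}^{[y]}_{q,\omega}\frac{1}{x-y}=\frac{1}{(x-y)(x-\overline{y})}$ is correct, and so is your use of (\ref{eq:ppp}). The slip is in the last step. With $F(z)=A(z)S(z)-\pi(A;z)=\int_I\frac{A(t)\varrho(t)}{z-t}\,d_{q,\omega}t$, your partial fractions give
\[
-\int_I\frac{A(t)\varrho(t)}{(x-t)(\overline{x}-t)}\,d_{q,\omega}t=-\frac{F(x)-F(\overline{x})}{\overline{x}-x}=+(\mathscr{D}_{q,\omega}F)(x),
\]
not $-(\mathscr{D}_{q,\omega}F)(x)$. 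Hence $\Psi(x)S(x)-\pi(\Psi;x)=\mathscr{D}_{q,\omega}(AS)(x)-\mathscr{D}_{q,\omega}\pi(A;x)$. The product rule then gives (\ref{eq:ric-S-sc}) with exactly the stated $C$ and $D$. The identity you actually wrote would force $A(\overline{x})\mathscr{D}_{q,\omega}S=-(\Psi+\mathscr{D}_{q,\omega}A)S+\mathscr{D}_{q,\omega}\pi(A;x)+\pi(\Psi;x)$, which is false. No detour through $\mathscr{D}_{\frac{1}{q},\frac{-\omega}{q}}$ is needed.

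With that fixed, your proof is a valid alternative to the paper's, which runs the same chain of identities in the opposite direction. The paper applies $\mathscr{D}^{[x]}_{q,\omega}$ to the reproducing identity for $AS$ and rewrites the kernel as $\mathscr{D}^{[x]}_{q,\omega}\frac{1}{x-y}=-\mathscr{D}^{[y]}_{q,\omega}\frac{1}{x-\overline{y}^{\,-1}}$. It then integrates by parts against this pre-shifted kernel, so (\ref{eq:ppp}) lands directly on $\frac{1}{x-y}\mathscr{D}_{q,\omega}(A\varrho)(y)$. The Pearson equation then applies with a single boundary term, and no change of variables is made.

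You integrate by parts against the unshifted kernel, which pushes the shift onto $A\varrho$, and you must undo it by reindexing. Your partial-fraction step is then just the computation $\mathscr{D}^{[x]}_{q,\omega}\frac{1}{x-t}=-\frac{1}{(x-t)(\overline{x}-t)}$. The reindexing you worried about is harmless. The nodes in (\ref{q omega integral definition}) are the iterates $bq^k+\omega[k]_q$ of $b$ under $y\mapsto\overline{y}$, so one gets
\[
\int_a^b h(\overline{y})\,d_{q,\omega}y=\frac{1}{q}\left(\int_a^b h(t)\,d_{q,\omega}t+\big[\varsigma(t)h(t)\big]_a^b\right),\qquad h(t)=\frac{q\,A(t)\varrho(t)}{(x-t)(\overline{x}-t)}.
\]
The extra term is therefore a multiple of $A\varrho$ at the endpoints. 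It vanishes under the same hypothesis the paper uses for its own boundary term, and it is absent at $a=\omega_0$, where $\varsigma(\omega_0)=0$. The paper's version is more economical; yours shows where the backward shift $\overline{y}^{\,-1}$ in its kernel comes from.
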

\begin{pf}
Following \cite[pg. 220]{magnus-jcam}, let us firstly note the following reproducing property for $S$,
\begin{equation}
A(x)S(x)=\displaystyle \int_{I}\frac{A(y)}{x-y}\varrho(y)d_{q,\omega}y+\pi(A; x)\,,\label{eq:rep-prop}
\end{equation}
where $\pi(A; x)$ is the polynomial defined by $\pi(A; x)=\int_{I}\frac{(A(x)-A(y))}{x-y}\varrho(y)d_{q,\omega}y\,,$
or, equivalently, in terms of  the moments $\varrho_n$  of  $\varrho$,  as
\begin{equation*}
\pi(A;x)=\sum_{k=0}^{\deg(A)-1}\left(\sum_{j=k+1}^{\deg(A)}a_j {\varrho}_{j-k-1}\right)x^k\,,
\end{equation*}
where $A(x)=\displaystyle \sum_{k=0}^{\deg(A)}a_k x^k$.

Let us now adopt, whenever convenient, the notation $\mathscr{D}^{[\cdot]}_{q,\omega}$ to denote the derivative with respect to the variable $\cdot$ in a given expression.

By deriving (\ref{eq:rep-prop}) we get
\begin{equation}\mathscr{D}^{[x]}_{q,\omega} \left(A(x)S(x)\right)=\int_{I}\mathscr{D}^{[x]}_{q,\omega}\left(\frac{1}{x-y}\right)A(y)\varrho(y)d_{q,\omega}y+\mathscr{D}^{[x]}_{q,\omega}\pi(A;x)\,.\label{eq:deriv1}
\end{equation}

A straightforward computation shows that $$\mathscr{D}^{[x]}_{q,\omega}\left(\frac{1}{x-y}\right)=-\mathscr{D}^{[y]}_{q,\omega}\left(\frac{1}{x-\overline{y}^{\;-1}}\right)\,,$$
then we have
\begin{equation}
\mathscr{D}^{[x]}_{q,\omega} \left(A(x) S(x)\right)
=-\int_{I}\mathscr{D}^{[y]}_{q,\omega}\left(\frac{1}{x-\overline{y}^{\;-1}}\right)A(y)\varrho(y)d_{q,\omega}y
+\mathscr{D}^{[x]}_{q,\omega}\pi(A;x)\,.\label{eq:deriv1}
\end{equation}

Let us now look at the first term in the right--hand side of the previous equation.  Using  integration by parts (\ref{eq:ppp}) and the Pearson equation  (\ref{eq:Pearson})  we obtain
\begin{align}
-\int_{I} \mathscr{D}^{[y]}_{q,\omega}\left(\frac{1}{x-\overline{y}^{\;-1}}\right)A(y)\varrho(y)d_{q,\omega}y=
\int_{I} \frac{1}{x-y}\mathscr{D}^{[y]}_{q,\omega}\left(A(y)\varrho(y)\right)d_{q,\omega}y
=\int_{I} \frac{1}{x-y}\Psi(y)\varrho(y)d_{q,\omega}y. \label{eq:useppp}
\end{align}

Also, by virtue of the reproducing property, we have
\begin{equation}
\int_{I} \frac{1}{x-y}\Psi(y)\varrho(y)d_{q,\omega}y=\Psi(x)S(x)-\pi(\Psi;x)\,. \label{eq:inv1}
\end{equation}
Therefore, from (\ref{eq:deriv1}), (\ref{eq:useppp}) and (\ref{eq:inv1}), we have, now dropping the upper script $[x]$,
$$\mathscr{D}_{q,\omega} \left(A(x)S(x)\right) = \Psi(x) S(x) + \mathscr{D}_{q,\omega}\pi(A;x) -\pi(\Psi ;x)\,,$$
hence the required equations follow.
\end{pf}

\section{Structure relations for discrete orthogonal polynomials}\label{sec:4}

In this section we deduce difference--differential relations, usually called  structure relations, satisfied by the sequences of orthogonal polynomials related to equations of  type (\ref{eq:defiRic-S}).
This is a key result for the forthcoming sections. The technique used in the theorem that follows can be traced back to the work of Laguerre \cite{lag}, it has been revisited many times in the literature  (see, e.g. \cite{magnus-jcam,ormerod-etal}).

\begin{teo} \label{teo2}
Let $S$ be a Stieltjes function satisfying the equation (\ref{eq:defiRic-S}), i.e.,
\begin{equation*}
A(\overline{x}) (\mathscr{D}_{q,\omega} S)(x) = B(x)  S(x) S(\overline{x}) + C_1(x) S(x) + C_2(x) S(\overline{x}) + D(x) \,, 
\end{equation*}
where  $A, B, C_1, C_2, D$ are polynomials,  $A\not\equiv 0$.
Let $\{P_n\}_{n\geq 0}$ be the
corresponding SMOP, and let $\{P_{n}^{(1)}\}_{n\geq 0}$, be the monic sequence of associated polynomials. Then, the following difference--differential relations hold, for all $n \geq 0$:
\begin{align}
A(\overline{x})(\mathscr{D}_{q,\omega}P_{n+1})(x)&=L_{n}(x)P_{n+1}(x)-C_1(x)P_{n+1}(\overline{x})+\Theta_{n}(x)P_{n}(x)-B(x)P^{(1)}_{n}(\overline{x}),\label{eq:est-Pn}\\
A(\overline{x})(\mathscr{D}_{q,\omega}P^{(1)}_{n})(x)&=L_{n}(x)P^{(1)}_{n}(x)+C_2(x)P^{(1)}_{n}(\overline{x})+\Theta_{n}(x)P^{(1)}_{n-1}(x)+D(x)P_{n+1}(\overline{x}),\label{eq:est-Pn(1)}
\end{align}
where $ \Theta_n$ and $L_n$ are polynomials with degrees such that
\begin{align}
    \deg(\Theta_n)
    &\leq \max\{\deg(A)-2, \deg(B)-2, \deg(C_1)-1, \deg(C_2)-1\},
    \label{bound deg Thetan}\\
    \deg(L_n)
    &\leq \min\Bigl\{
        \max\{\deg(A)-1, \deg(C_1), \deg(\Theta_n)-1, \deg(B)-1\}, \notag\\
    &\quad\quad\quad\,\,\max\{\deg(A)-1, \deg(C_2), \deg(\Theta_n)-1, \deg(D)+1\}
    \Bigr\},
    \label{bound deg Ln}
\end{align}
satisfying the  initial conditions
\begin{equation}
\label{eq:ic}
\begin{aligned}
L_0(x)&=-C_2(x)-(\overline{x}-\beta_0)D(x)\,, \\
\Theta_0(x)&=A(\overline{x})+B(x)+(x-\beta_0)C_2(x)+(\overline{x}-\beta_0)C_1(x)+(x-\beta_0)(\overline{x}-\beta_0)D(x).
\end{aligned}
\end{equation}
\end{teo}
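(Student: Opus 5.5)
We follow Laguerre's method, working with the remainder functions $\epsilon_{n}(x):=P_{n+1}(x)S(x)-P_n^{(1)}(x)=\mathcal{O}(x^{-n-2})$ given by (\ref{eq:Pade}). The plan is to substitute $S=(P_n^{(1)}+\epsilon_n)/P_{n+1}$ into (\ref{eq:defiRic-S}), separate the polynomial part from the part that is small at infinity, and read off (\ref{eq:est-Pn}) and (\ref{eq:est-Pn(1)}) by Cramer's rule. The Liouville identity (\ref{eq:liou}) makes that Cramer step possible.

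\textbf{Step 1 (the key polynomial).} Multiply (\ref{eq:defiRic-S}) by $P_{n+1}(x)P_{n+1}(\overline{x})$. On the left, the quotient rule gives
\[
P_{n+1}(x)P_{n+1}(\overline{x})\,\mathscr{D}_{q,\omega}S
=P_{n+1}\mathscr{D}_{q,\omega}(P_{n+1}S)-(P_{n+1}S)\,\mathscr{D}_{q,\omega}P_{n+1}.
\]
Write $P_{n+1}S=P_n^{(1)}+\epsilon_n$ everywhere. Collect all terms that involve only polynomials; call the resulting polynomial
\[
\Omega_n(x)=A(\overline{x})\bigl(P_{n+1}\mathscr{D}_{q,\omega}P_n^{(1)}-P_n^{(1)}\mathscr{D}_{q,\omega}P_{n+1}\bigr)
-B\,P_n^{(1)}(x)P_n^{(1)}(\overline{x})-C_1P_n^{(1)}(x)P_{n+1}(\overline{x})-C_2P_{n+1}(x)P_n^{(1)}(\overline{x})-D\,P_{n+1}(x)P_{n+1}(\overline{x}).
\]
The identity then reads $\Omega_n=$ (a sum of terms each containing $\epsilon_n(x)$ or $\epsilon_n(\overline{x})$ multiplied by polynomials). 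Since $\epsilon_n=\mathcal{O}(x^{-n-2})$ and $\mathscr{D}_{q,\omega}\epsilon_n=\mathcal{O}(x^{-n-3})$, each such term has degree at most $n+\deg(\text{coefficient})-2$. The coefficients come from $A$, $B$, $C_i$, $D$ times $P_{n+1}$ or $P_n^{(1)}$. Hence $\Omega_n$ is a polynomial of degree at most $n+\max\{\deg A-2,\deg B-2,\deg C_1-1,\deg C_2-1\}$. Under a similar bookkeeping I define $\Theta_n$ by $\Omega_n=\pm\gamma_1\cdots\gamma_n\,\Theta_n\cdot(\ldots)$; more precisely, I aim to write the structure relations as a linear system and let $\Theta_n$, $L_n$ appear as its solution.

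\textbf{Step 2 (linear system and Cramer).} Define polynomials $\mathcal{X}_1,\mathcal{X}_2$ by
\[
\mathcal{X}_1:=A(\overline{x})\mathscr{D}_{q,\omega}P_{n+1}+C_1P_{n+1}(\overline{x})+BP_n^{(1)}(\overline{x}),\qquad
\mathcal{X}_2:=A(\overline{x})\mathscr{D}_{q,\omega}P_{n}^{(1)}-C_2P_n^{(1)}(\overline{x})-DP_{n+1}(\overline{x}).
\]
The claim is then that $(\mathcal{X}_1,\mathcal{X}_2)^T=\mathcal{P}_n^T(L_n,\Theta_n)^T$ for polynomials $L_n,\Theta_n$; that is, $\mathcal{X}_1=L_nP_{n+1}+\Theta_nP_n$ and $\mathcal{X}_2=L_nP_n^{(1)}+\Theta_nP_{n-1}^{(1)}$. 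By (\ref{eq:liou}), $\det\mathcal{P}_n=-\prod_{k\le n}\gamma_k$ is a nonzero constant, so Cramer's rule defines $L_n$ and $\Theta_n$ as polynomials:
\[
\Theta_n\propto P_{n+1}\mathcal{X}_2-P_n^{(1)}\mathcal{X}_1,\qquad L_n\propto P_{n-1}^{(1)}\mathcal{X}_1-P_n\mathcal{X}_2 .
\]
This proves both relations simultaneously with polynomial coefficients. The real content is the degree bounds.

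\textbf{Step 3 (degrees).} The combination $P_{n+1}\mathcal{X}_2-P_n^{(1)}\mathcal{X}_1$ is exactly $\Omega_n$ from Step 1, using $P_n^{(1)}(\overline{x})=P_{n+1}(\overline{x})S(\overline{x})-\epsilon_n(\overline{x})$. So the asymptotic estimate of Step 1 gives (\ref{bound deg Thetan}). For $L_n$, I would solve one structure relation for $L_nP_{n+1}$, replace $P_n^{(1)}(\overline{x})$ by $P_{n+1}(\overline{x})S(\overline{x})-\epsilon_n(\overline{x})$, and compare leading degrees. This bounds $\deg L_n$ by the first maximum in (\ref{bound deg Ln}), since $\deg(A\,\mathscr{D}P_{n+1})-(n+1)=\deg A-1$, $\deg C_1$, $\deg\Theta_n-1$, and $\deg(BP_n^{(1)})-(n+1)=\deg B-1$. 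The second relation gives the second maximum, where the $D$-term contributes $\deg D+1$. Taking the minimum yields (\ref{bound deg Ln}).

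\textbf{Step 4 (initial conditions).} Set $n=0$ with $P_1=x-\beta_0$, $P_0=1$, $P_0^{(1)}=1$, $P_{-1}^{(1)}=0$. The second relation gives $0=L_0+C_2+D(\overline{x}-\beta_0)$, which is $L_0$ in (\ref{eq:ic}). The first relation gives $A(\overline{x})=L_0(x-\beta_0)-C_1(\overline{x}-\beta_0)+\Theta_0-B$, which solves to $\Theta_0$ in (\ref{eq:ic}).

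The main obstacle is the asymptotic bookkeeping in Steps 1 and 3. One must check that the $S(x)S(\overline{x})$ term produces only $\epsilon$-terms of the stated order, in particular that $\epsilon_n(\overline{x})=\mathcal{O}(x^{-n-2})$ and that its $\mathscr{D}_{q,\omega}$ loses one power. One must also check that the cross terms $\epsilon_n(x)\epsilon_n(\overline{x})$ are negligible. I would handle this by expanding $\epsilon_n(\overline{x})$ via (\ref{eq:inv}).
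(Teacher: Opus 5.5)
Your argument is essentially the paper's proof. Your $\Omega_n$ is exactly the paper's polynomial $\check{\Theta}_n$, with $\Theta_n=-\Omega_n/\prod_{k=1}^{n}\gamma_k$, and you obtain the $L_n$ bound (by comparing degrees) and the $n=0$ initial conditions the same way the paper does. The only difference is how $L_n,\Theta_n$ are extracted. You invert $\mathcal{P}_n^T$, whose determinant $-\prod_{k=1}^{n}\gamma_k$ is a nonzero constant. The paper instead rewrites $\check\Theta_n$ using (\ref{eq:liou}) and then argues that $P_{n+1}$ and $P_n^{(1)}$ have no common zeros. This is the same linear algebra, but your version makes plain two things: the existence of polynomials $L_n,\Theta_n$ is automatic, and the Riccati equation is needed only for the degree bounds.

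One correction is needed in Step 1. Your bound $\deg\Omega_n\le n+\max\{\cdots\}$ is wrong, and taken literally it would not give the bound on $\deg(\Theta_n)$ that you claim in Step 3. In every $\epsilon$-term, the polynomial factor ($P_{n+1}$, $P_n^{(1)}$ or $\mathscr{D}_{q,\omega}P_{n+1}$, of degree $n+1$ or $n$) cancels the decay of $\epsilon_n=\mathcal{O}(x^{-n-2})$ or $\mathscr{D}_{q,\omega}\epsilon_n=\mathcal{O}(x^{-n-3})$. Concretely:
\begin{itemize}
\item $A(\overline{x})P_{n+1}\mathscr{D}_{q,\omega}\epsilon_n$ and $A(\overline{x})\epsilon_n\mathscr{D}_{q,\omega}P_{n+1}$ are $\mathcal{O}(x^{\deg A-2})$;
\item $B\,\epsilon_n(x)P_n^{(1)}(\overline{x})$ and $B\,P_n^{(1)}(x)\epsilon_n(\overline{x})$ are $\mathcal{O}(x^{\deg B-2})$;
\item $C_1\epsilon_n(x)P_{n+1}(\overline{x})=\mathcal{O}(x^{\deg C_1-1})$ and $C_2P_{n+1}(x)\epsilon_n(\overline{x})=\mathcal{O}(x^{\deg C_2-1})$;
\item the cross term $B\,\epsilon_n(x)\epsilon_n(\overline{x})$ is $\mathcal{O}(x^{\deg B-2n-4})$.
\end{itemize}
Hence $\deg\Omega_n\le\max\{\deg A-2,\deg B-2,\deg C_1-1,\deg C_2-1\}$, with no $n$, which is exactly what Step 3 requires.

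A minor point: in the $L_n$ bound, replacing $P_n^{(1)}(\overline{x})$ by $P_{n+1}(\overline{x})S(\overline{x})-\epsilon_n(\overline{x})$ is unnecessary. The direct degree count you list already gives both maxima.
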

\begin{pf}
Let us denote the reminder of (\ref{eq:Pade}) by $\varepsilon_n$, thus  we write
\begin{equation}P_{n+1}(x)S(x)-P_{n}^{(1)}(x)=\varepsilon_{n+1}(x)\,, \quad n \geq 0\,. \label{eq:HP}
\end{equation} Applying (\ref{eq:HP}) in  (\ref{eq:defiRic-S})
 we obtain
\begin{align*}
&A(\overline{x})\mathscr{D}_{q,\omega}\left(\frac{\varepsilon_{n+1}}{P_{n+1}}\right)(x)+A(\overline{x})\mathscr{D}_{q,\omega}\left(\frac{ P^{(1)}_{n}}{P_{n+1}}\right)(x)\\
&=B(x)\left(\frac{\varepsilon_{n+1}(x)}{P_{n+1}(x)}+\frac{ P^{(1)}_{n}(x)}{P_{n+1}(x)}\right)\left(\frac{\varepsilon_{n+1}(\overline{x})}{P_{n+1}(\overline{x})}+
\frac{P^{(1)}_{n}(\overline{x})}{P_{n+1}(\overline{x})}\right)\\
&+C_1(x)\left(\frac{\varepsilon_{n+1}(x)}{P_{n+1}(x)}+\frac{ P^{(1)}_{n}(x)}{P_{n+1}(x)}\right)+C_2(x)\left(\frac{\varepsilon_{n+1}(\overline{x})}{P_{n+1}(\overline{x})}+\frac{ P^{(1)}_{n}(\overline{x})}{P_{n+1}(\overline{x})}\right)+D(x)\,.
\end{align*}
Thus, we have
\begin{align}
&A(\overline{x})\mathscr{D}_{q,\omega}\left(\frac{P^{(1)}_{n}}{P_{n+1}}\right)(x)-B(x)\left(\frac{P^{(1)}_{n}(x)}{P_{n+1}(x)}
\frac{P^{(1)}_{n}(\overline{x})}{P_{n+1}(\overline{x})}\right)-C_1(x) \frac{P^{(1)}_{n}(x)}{P_{n+1}(x)}-C_2(x)\frac{P^{(1)}_{n}(\overline{x})}{P_{n+1}(\overline{x})}-D(x)\notag\\
&=-A(\overline{x})\mathscr{D}_{q,\omega}\left(\frac{\varepsilon_{n+1}}{P_{n+1}}\right)(x)
+B(x) \vartheta_n(x)
+C_1(x)\frac{\varepsilon_{n+1}(x)}{P_{n+1}(x)}+C_2(x)\frac{\varepsilon_{n+1}(\overline{x})}{P_{n+1}(\overline{x})}\,, \label{eq:aux-magnus}
\end{align}
where we use the notation
$$\vartheta_n(x):=\frac{\varepsilon_{n+1}(x)}{P_{n+1}(x)}
\left(\frac{\varepsilon_{n+1}(\overline{x})}{P_{n+1}(\overline{x})}+\frac{ P^{(1)}_{n}(\overline{x})}{P_{n+1}(\overline{x})}\right)+
\frac{P^{(1)}_{n}(x)}{P_{n+1}(x)}\frac{\varepsilon_{n+1}(\overline{x})}{P_{n+1}(\overline{x})} \,.$$

After multiplying (\ref{eq:aux-magnus}) by $P_{n+1}(x)P_{n+1}(\overline{x})$ we  deduce
\begin{align*}
A&(\overline{x})\left(\mathscr{D}_{q,\omega}P_{n}^{(1)}\right)(x) P_{n+1}(x)- A(\overline{x})\left(\mathscr{D}_{q,\omega}P_{n+1}\right)(x) P^{(1)}_{n}(x)-B(x) P^{(1)}_{n}(x)P^{(1)}_{n}(\overline{x})\notag\\
&-C_1(x)P^{(1)}_{n}(x)P_{n+1}(\overline{x})-C_2(x)P^{(1)}_{n}(\overline{x})P_{n+1}(x)-{D}(x)P_{n+1}(x)P_{n+1}(\overline{x})\notag\\
	 &= \left[-A(\overline{x})\mathscr{D}_{q,\omega}\left(\frac{\varepsilon_{n+1}}{P_{n+1}}\right)(x)+B(x) \vartheta_n(x)
+C_1(x)\frac{\varepsilon_{n+1}(x)}{P_{n+1}(x)}+C_2(x)\frac{\varepsilon_{n+1}(\overline{x})}{P_{n+1}(\overline{x})}\right]P_{n+1}(x)P_{n+1}(\overline{x})\,.
\end{align*}

As the  left--hand side of the previous equation is a polynomial,  that we denote by $\check{\Theta}_n$, then we write the previous equality as
\begin{align}
A(\overline{x})\left(\mathscr{D}_{q,\omega}P_{n}^{(1)}\right)(x) P_{n+1}(x)- A(\overline{x})\left(\mathscr{D}_{q,\omega}P_{n+1}\right)(x) P^{(1)}_{n}(x)-B(x) P^{(1)}_{n}(x)P^{(1)}_{n}(\overline{x})&\notag\\
-C_1(x)P^{(1)}_{n}(x)P_{n+1}(\overline{x})-C_2(x)P^{(1)}_{n}(\overline{x})P_{n+1}(x)-{D}(x)P_{n+1}(x)P_{n+1}(\overline{x})
	 &= \check{\Theta}_n(x).
\label{eq:aux2-theo-magnus}
\end{align}
Furthermore, as
\begin{align*}
\check{\Theta}_n(x)=&-A(\overline{x}) \left(\mathscr{D}_{q,\omega}\varepsilon_{n+1}\right)(x)P_{n+1}(x)+A(\overline{x})\left(\mathscr{D}_{q,\omega}P_{n+1}\right)(x)\varepsilon_{n+1}(x)\\
&+B(x)\left[\varepsilon_{n+1}(x)(\varepsilon_{n+1}(\overline{x})+P_n^{(1)}(\overline{x}))+P_n^{(1)}({x})\varepsilon_{n+1}(\overline{x})\right]\\
&+C_1(x)\varepsilon_{n+1}(x) P_{n+1}(\overline{x})+ C_2(x)\varepsilon_{n+1}(\overline{x}) P_{n+1}({x})\,,
\end{align*}
by (\ref{eq:Pade}), the degree  of $\check{\Theta}_n$ is bounded above by $\max\{\deg(A)-2, \deg(B)-2, \deg(C_1)-1, \deg(C_2)-1\}$. 

Returning to (\ref{eq:aux2-theo-magnus}), let us now use  (\ref{eq:liou}), and replace $\check{\Theta}_n$ in (\ref{eq:aux2-theo-magnus}) by
\begin{equation*}
\displaystyle \frac{\left(P_n^{(1)}(x)P_n(x)-P_{n+1}(x)P_{n-1}^{(1)}(x)\right)}{\prod_{k=1}^{n}\gamma_k}\check{\Theta}_n(x)\,.
\end{equation*}
After rearranging and using the notation
\begin{equation*}
\label{defThetan}
\Theta_n(x):=-\check{\Theta}_n(x)\left(\prod_{k=1}^{n}\gamma_k\right)^{-1},
\end{equation*}
we  obtain
\begin{equation}
\label{eq:aux4-theo-magnus}
\begin{aligned}
\left[A(\overline{x})(\mathscr{D}_{q,\omega}P_{n}^{(1)})(x) -C_2(x)P^{(1)}_{n}(\overline{x})-{D}(x)P_{n+1}(\overline{x})-{\Theta}_n(x)P_{n-1}^{(1)}(x)\right]P_{n+1}(x)\\
=\left[A(\overline{x})(\mathscr{D}_{q,\omega}P_{n+1})(x) +B(x) P^{(1)}_{n}(\overline{x})+ C_1(x)P_{n+1}(\overline{x})-{\Theta}_n(x)P_n(x)\right] P^{(1)}_{n}(x)\,.
\end{aligned}
\end{equation}
Notice that the degree of the polynomial $\Theta_n$ is the same as that of $\check{\Theta}_n$, thus, the bound in~(\ref{bound deg Thetan}) holds.

As $P_n^{(1)}$ and $P_{n+1}$ do not have common zeros (this property follows, for instance, from (\ref{eq:liou})),
then both hand sides of (\ref{eq:aux4-theo-magnus}) must have the form $L_nP_n^{(1)}P_{n+1}$, where $L_n$ is a new auxiliary polynomial of bounded degree. Thus, we have
\begin{equation*}
\begin{aligned}
A(\overline{x})(\mathscr{D}_{q,\omega}P_{n+1})(x) +B(x) P^{(1)}_{n}(\overline{x})+ C_1(x)P_{n+1}(\overline{x})-{\Theta}_n(x)P_n(x)=L_n(x)	P_{n+1}(x)\,,\\
 A(\overline{x})(\mathscr{D}_{q,\omega}P_{n}^{(1)})(x) -C_2(x)P^{(1)}_{n}(\overline{x})-{D}(x)P_{n+1}(\overline{x})-{\Theta}_n(x)P_{n-1}^{(1)}(x)=L_n(x)P^{(1)}_{n}(x)\,,
\end{aligned}
\end{equation*}
 which proves (\ref{eq:est-Pn})--(\ref{eq:est-Pn(1)}). Taking $n=0$ in the above two equations, the initial conditions given in~(\ref{eq:ic}) are deduced. Finally, comparing the degrees of the polynomials in~(\ref{eq:est-Pn}) and~(\ref{eq:est-Pn(1)}) the following two bounds for the degree of $L_n$ hold
\begin{align*}
\deg(L_n)&\leq \max\{\deg(A)-1, \deg(C_1), \deg(\Theta_n)-1, \deg(B)-1\}, \notag\\
\deg(L_n)&\leq \max\{\deg(A)-1, \deg(C_2), \deg(\Theta_n)-1, \deg(D)+1\},
\end{align*}
and as consequence the bound  given in~(\ref{bound deg Ln}) follows.
\end{pf}

Two further structure relations, which will be fundamental in the proof of Theorem~\ref{teo4}, are obtained in the next statement.

\begin{lema}
Under the conditions and notations of Theorem \ref{teo2}, the following difference--differential relations hold, for all $n \geq 1$:
\begin{align}
A(\overline{x})(\mathscr{D}_{q,\omega}P_{n})(x)&=\left(L_{n-1}(x)+(x-\beta_n)\frac{\Theta_{n-1}(x)}{\gamma_n}\right)P_{n}(x)\nonumber\\
&-\frac{\Theta_{n-1}(x)}{\gamma_n} P_{n+1}(x) -C_1(x)P_{n}(\overline{x})-B(x)P^{(1)}_{n-1}(\overline{x})\,,\label{eq:extra-est-Pn} \vspace{0.3cm}\\
A(\overline{x})(\mathscr{D}_{q,\omega}P^{(1)}_{n-1})(x)&=\left(L_{n-1}(x)+(x-\beta_n)\frac{\Theta_{n-1}(x)}{\gamma_n}\right)P^{(1)}_{n-1}(x) \nonumber\\
&-\frac{\Theta_{n-1}(x)}{\gamma_n}P^{(1)}_{n}(x)+C_2(x)P^{(1)}_{n-1}(\overline{x})+D(x)P_{n}(\overline{x})\,.\label{eq:extra-est-Pn(1)}
\end{align}
\end{lema}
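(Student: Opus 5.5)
The plan is to take the structure relations (\ref{eq:est-Pn})--(\ref{eq:est-Pn(1)}) of Theorem~\ref{teo2} at index $n-1$, and eliminate the "lower" polynomials $P_{n-1}$ and $P^{(1)}_{n-2}$ using the three--term recurrence relations (\ref{eq:ttrr-Pn}) and (\ref{eq:ttrr-Pn1}).

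For $n\geq 1$, relation (\ref{eq:est-Pn}) with $n$ replaced by $n-1$ reads
\begin{equation*}
A(\overline{x})(\mathscr{D}_{q,\omega}P_{n})(x)=L_{n-1}(x)P_{n}(x)-C_1(x)P_{n}(\overline{x})+\Theta_{n-1}(x)P_{n-1}(x)-B(x)P^{(1)}_{n-1}(\overline{x}).
\end{equation*}
Since $\gamma_n\neq 0$, the recurrence (\ref{eq:ttrr-Pn}) gives
\begin{equation*}
P_{n-1}(x)=\frac{(x-\beta_n)P_n(x)-P_{n+1}(x)}{\gamma_n}.
\end{equation*}
Substituting this into the term $\Theta_{n-1}P_{n-1}$ and collecting the coefficient of $P_n(x)$ yields (\ref{eq:extra-est-Pn}) directly.

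The second relation follows in the same way. Take (\ref{eq:est-Pn(1)}) at index $n-1$:
\begin{equation*}
A(\overline{x})(\mathscr{D}_{q,\omega}P^{(1)}_{n-1})(x)=L_{n-1}(x)P^{(1)}_{n-1}(x)+C_2(x)P^{(1)}_{n-1}(\overline{x})+\Theta_{n-1}(x)P^{(1)}_{n-2}(x)+D(x)P_{n}(\overline{x}).
\end{equation*}
Use (\ref{eq:ttrr-Pn1}) with index $n-1$, namely $P^{(1)}_{n}=(x-\beta_n)P^{(1)}_{n-1}-\gamma_nP^{(1)}_{n-2}$, to write
\begin{equation*}
P^{(1)}_{n-2}(x)=\frac{(x-\beta_n)P^{(1)}_{n-1}(x)-P^{(1)}_n(x)}{\gamma_n}.
\end{equation*}
Substituting this gives (\ref{eq:extra-est-Pn(1)}).

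The only point needing care is the boundary case $n=1$. There $P^{(1)}_{-1}=0$, and (\ref{eq:ttrr-Pn1}) at index $0$ reads $P^{(1)}_1=(x-\beta_1)P^{(1)}_0$. This is consistent with the formula above, since it gives $P^{(1)}_{-1}=0$. Apart from this, there is no real obstacle: the argument is a direct substitution, and the index bookkeeping is the main thing to verify.
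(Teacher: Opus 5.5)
Your proposal is correct and takes essentially the same approach as the paper: write (\ref{eq:est-Pn})--(\ref{eq:est-Pn(1)}) at index $n-1$ and eliminate $P_{n-1}$ and $P^{(1)}_{n-2}$ using the three--term recurrence relations (\ref{eq:ttrr-Pn}) and (\ref{eq:ttrr-Pn1}). Your explicit check of the case $n=1$, where $P^{(1)}_{-1}=0$, is a small detail that the paper leaves implicit.
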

\begin{pf}
To obtain (\ref{eq:extra-est-Pn}), we write (\ref{eq:est-Pn}) to $n-1$, i.e.,
$$
A(\overline{x})(\mathscr{D}_{q,\omega}P_{n})(x)=L_{n-1}(x)P_{n}(x)-C_1(x)P_{n}(\overline{x})+\Theta_{n-1}(x)P_{n-1}(x)-B(x)P^{(1)}_{n-1}(\overline{x})\,,$$
and then use the three--term recurrence relation $$P_{n-1}(x)=\frac{(x-\beta_n)}{\gamma_n}P_n(x)-\frac{1}{\gamma_n}P_{n+1}(x)\,.$$
The equation (\ref{eq:extra-est-Pn(1)}) follows by  analogous procedure, starting with equation (\ref{eq:est-Pn(1)}).
\end{pf}

\subsection{Further relations for the polynomials $\Theta_n$, $L_n$}\label{sec:4.1}

The main purpose of this section is to obtain equations for the polynomials $\Theta_n, L_n$ defining the structure relations that were previously deduced. Firstly, in the following statement, we derive an equation for the  matrices introduced in (\ref{eq:calP}).

\begin{lema}
Let  the Stieltjes function $S$ given by (\ref{eq:expan-S}) satisfy the difference equation (\ref{eq:defiRic-S}),
$$A(\overline{x}) (\mathscr{D}_{q,\omega} S)(x) = B(x)  S(x) S(\overline{x}) + C_1(x) S(x) + C_2(x) S(\overline{x}) + D(x) \,,$$
and let $\{\mathcal{P}_n\}_{n \geq 0}$ be the corresponding sequence defined by (\ref{eq:calP}).
Then, the following matrix Sylvester equation holds:
\begin{equation}
	A(\overline{x}) (\mathscr{D}_{q,\omega}\mathcal{P}_{n})(x)=\mathcal{A}_n(x) \mathcal{P}_{n}(x)-\mathcal{P}_{n}(\overline{x})\mathcal{C}(x)\,, \quad n \geq 0\,, \label{eq:sylv-calPn}
\end{equation}
where
\begin{equation}
	\mathcal{A}_n=
	\begin{bmatrix}
		 L_{n} & \Theta_{n} \\
		-\Theta_{n-1}/\gamma_n &\;\;
		L_{n-1}+(x-\beta_n)\Theta_{n-1}/\gamma_n
	\end{bmatrix}\,,
	\;\;\mathcal{C}=
	\begin{bmatrix}
		 C_1 & -D \\
		B & -C_2
	\end{bmatrix}\,,\label{eq:calAn-C}
\end{equation}
with the initial conditions
\begin{equation}
L_{-1}(x)=C_1(x),\quad \frac{\Theta_{-1}(x)}{\gamma_0}=D(x)\,. \label{eq:ic2}
\end{equation}
\end{lema}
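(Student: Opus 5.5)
The plan is to verify the Sylvester equation (\ref{eq:sylv-calPn}) entrywise, reading off each of its four scalar entries from results already proved. Write $\mathcal{P}_n$ as in (\ref{eq:calP}). The $(i,j)$ entry of $\mathcal{P}_n(\overline{x})\mathcal{C}(x)$ is computed first. The first column is $C_1P_{n+1}(\overline{x})+B P_n^{(1)}(\overline{x})$ in row one and $C_1P_n(\overline{x})+BP_{n-1}^{(1)}(\overline{x})$ in row two. The second column is $-DP_{n+1}(\overline{x})-C_2P_n^{(1)}(\overline{x})$ in row one and $-DP_n(\overline{x})-C_2P_{n-1}^{(1)}(\overline{x})$ in row two. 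These are exactly the terms with $\overline{x}$ appearing, with opposite sign, in the structure relations.

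\textbf{First row.} This will follow from Theorem~\ref{teo2}. The $(1,1)$ entry of $\mathcal{A}_n\mathcal{P}_n$ is $L_nP_{n+1}+\Theta_nP_n$, and the $(1,2)$ entry is $L_nP_n^{(1)}+\Theta_nP_{n-1}^{(1)}$. Subtracting the first row of $\mathcal{P}_n(\overline{x})\mathcal{C}$ reproduces the right-hand sides of (\ref{eq:est-Pn}) and (\ref{eq:est-Pn(1)}) verbatim.

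\textbf{Second row.} This will follow from (\ref{eq:extra-est-Pn}) and (\ref{eq:extra-est-Pn(1)}) for $n\ge1$. The $(2,1)$ entry of $\mathcal{A}_n\mathcal{P}_n$ is $-\frac{\Theta_{n-1}}{\gamma_n}P_{n+1}+\bigl(L_{n-1}+(x-\beta_n)\frac{\Theta_{n-1}}{\gamma_n}\bigr)P_n$, which matches (\ref{eq:extra-est-Pn}) after subtracting $C_1P_n(\overline{x})+BP^{(1)}_{n-1}(\overline{x})$. The $(2,2)$ entry matches (\ref{eq:extra-est-Pn(1)}) in the same way.

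\textbf{The case $n=0$.} This is the main point to check, since the lemma is stated only for $n\ge1$. Here $\mathcal{P}_0$ has second row $(P_0,P_{-1}^{(1)})=(1,0)$, so the left-hand side of the second row vanishes. The right-hand side of the $(2,1)$ entry becomes
\[
-\frac{\Theta_{-1}}{\gamma_0}P_1+\Bigl(L_{-1}+(x-\beta_0)\frac{\Theta_{-1}}{\gamma_0}\Bigr)-C_1 ,
\]
and the $(2,2)$ entry becomes $-\frac{\Theta_{-1}}{\gamma_0}P_0^{(1)}+D$. With the conventions (\ref{eq:ic2}), $L_{-1}=C_1$ and $\Theta_{-1}/\gamma_0=D$. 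Using $P_1=x-\beta_0$ and $P_0^{(1)}=1$, both expressions reduce to $0$, which is what is required. Conversely, these two vanishing requirements force (\ref{eq:ic2}), which justifies that choice.

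The first row at $n=0$ is just Theorem~\ref{teo2} with $n=0$. There $P_{-1}^{(1)}=0$, so no extra convention is needed.
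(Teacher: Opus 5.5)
Your proof is correct and takes the same approach as the paper. The Sylvester equation is obtained by writing the four structure relations (\ref{eq:est-Pn}), (\ref{eq:est-Pn(1)}), (\ref{eq:extra-est-Pn}) and (\ref{eq:extra-est-Pn(1)}) as the four entries of one matrix identity, and the conventions (\ref{eq:ic2}) make the second row consistent at $n=0$; your explicit check that the second row vanishes there, and that this forces (\ref{eq:ic2}), fills in a step the paper only states.
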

\begin{proof}
The result follows from definition (\ref{eq:calP}) and coupling the equations (\ref{eq:est-Pn}),  (\ref{eq:est-Pn(1)}), (\ref{eq:extra-est-Pn}) and (\ref{eq:extra-est-Pn(1)}) into a matrix compact form. Note that the result is also valid for $n=0$ taking the initial conditions given in~(\ref{eq:ic2}).
\end{proof}

\begin{cor}
The transfer matrices $\mathcal{M}_n$  defined in (\ref{eq:rrcalP}) satisfy the difference equation
\begin{equation}
	A(\overline{x})(\mathscr{D}_{q,\omega}\mathcal{M}_{n})(x)=\mathcal{A}_n(x) \mathcal{M}_{n}(x)-\mathcal{M}_{n}(\overline{x})\mathcal{A}_{n-1}(x)\,, \quad n \geq 0\,.\label{eq:sylv-transfer}
\end{equation}\end{cor}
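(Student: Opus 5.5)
The plan is to write $\mathcal{P}_n=\mathcal{M}_n\mathcal{P}_{n-1}$ from (\ref{eq:rrcalP}) and apply the product rule for $\mathscr{D}_{q,\omega}$ entrywise. For matrices, $(\mathscr{D}_{q,\omega}(FG))(x)=(\mathscr{D}_{q,\omega}F)(x)\,G(x)+F(\overline{x})\,(\mathscr{D}_{q,\omega}G)(x)$; the order of factors is kept because matrix multiplication is noncommutative, and each entry follows from the scalar product rule. We will choose the form that puts $\mathcal{M}_n(\overline{x})$ in front of the derivative of $\mathcal{P}_{n-1}$. This is the form that combines cleanly with the Sylvester equation (\ref{eq:sylv-calPn}), whose right factor $\mathcal{P}_{n-1}(\overline{x})\mathcal{C}(x)$ is evaluated at $\overline{x}$. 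Multiplying by $A(\overline{x})$ gives
\begin{equation*}
A(\overline{x})(\mathscr{D}_{q,\omega}\mathcal{P}_n)(x)=A(\overline{x})(\mathscr{D}_{q,\omega}\mathcal{M}_n)(x)\,\mathcal{P}_{n-1}(x)+\mathcal{M}_n(\overline{x})\,A(\overline{x})(\mathscr{D}_{q,\omega}\mathcal{P}_{n-1})(x).
\end{equation*}

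Next, we substitute (\ref{eq:sylv-calPn}) for both $n$ and $n-1$. The left-hand side becomes $\mathcal{A}_n\mathcal{M}_n\mathcal{P}_{n-1}(x)-\mathcal{M}_n(\overline{x})\mathcal{P}_{n-1}(\overline{x})\mathcal{C}(x)$, where we have used $\mathcal{P}_n(\overline{x})=\mathcal{M}_n(\overline{x})\mathcal{P}_{n-1}(\overline{x})$. The second term on the right becomes $\mathcal{M}_n(\overline{x})\mathcal{A}_{n-1}(x)\mathcal{P}_{n-1}(x)-\mathcal{M}_n(\overline{x})\mathcal{P}_{n-1}(\overline{x})\mathcal{C}(x)$. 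The terms involving $\mathcal{C}$ cancel, and we are left with
\begin{equation*}
\Bigl[A(\overline{x})(\mathscr{D}_{q,\omega}\mathcal{M}_n)(x)-\mathcal{A}_n(x)\mathcal{M}_n(x)+\mathcal{M}_n(\overline{x})\mathcal{A}_{n-1}(x)\Bigr]\mathcal{P}_{n-1}(x)=0 .
\end{equation*}

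To remove $\mathcal{P}_{n-1}$, note that by (\ref{eq:liou}) its determinant is $\pm\prod_{k=1}^{n-1}\gamma_k\neq0$, a nonzero constant. Hence $\mathcal{P}_{n-1}(x)$ is invertible as a matrix of polynomials (equivalently, invertible at every $x$), and the bracket vanishes identically, which is (\ref{eq:sylv-transfer}).

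The main obstacle is the boundary index. For $n=0$ the matrices $\mathcal{M}_0$, $\mathcal{P}_{-1}$ and $\mathcal{A}_{-1}$ must be interpreted through the conventions (\ref{eq:ic2}) together with $P_{-1}=0$, $P^{(1)}_{-2}=0$ and similar ones. The natural choice is $\mathcal{P}_{-1}=\begin{bmatrix}1&0\\0&-1\end{bmatrix}$, with $P^{(1)}_{-2}$ defined so that $\mathcal{P}_0=\mathcal{M}_0\mathcal{P}_{-1}$ holds. With this choice the $n=0$ case can be checked directly against (\ref{eq:ic}). Alternatively, since $\mathcal{M}_n$ is explicit, the identity can be verified entrywise for $n=0$ using $\mathscr{D}_{q,\omega}(x-\beta_0)=1$ and the initial conditions (\ref{eq:ic}) and (\ref{eq:ic2}). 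The case $n\ge1$ is covered by the argument above; the $n=0$ case is the step I expect to require the most care in fixing conventions.
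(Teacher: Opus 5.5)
Your argument for $n\ge 1$ is the paper's proof. You apply the product rule to $\mathcal{P}_n=\mathcal{M}_n\mathcal{P}_{n-1}$, substitute (\ref{eq:sylv-calPn}) at $n$ and $n-1$, cancel the $\mathcal{C}$--terms, and strip off the right factor $\mathcal{P}_{n-1}$. You make its invertibility explicit (constant nonzero determinant from (\ref{eq:liou}), or directly for $\mathcal{P}_0$), where the paper only says ``straightforward simplifications''.

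The paper's proof likewise only covers $n\ge1$, and only that range is used afterwards. For $n=0$, note that (\ref{eq:ic2}) fixes only the first row of $\mathcal{A}_{-1}$. The second row enters via the $-\gamma_0$ entry of $\mathcal{M}_0(\overline{x})$, so your entrywise alternative is underdetermined as stated. Your choice $\mathcal{P}_{-1}=\mathrm{diag}(1,-1)$ requires $P^{(1)}_{-2}=-1$, not $0$ as you first list. It then forces $\mathcal{A}_{-1}:=\mathcal{P}_{-1}\mathcal{C}\mathcal{P}_{-1}^{-1}=\begin{bmatrix}C_1&D\\-B&-C_2\end{bmatrix}$. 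With this definition the $n=0$ case follows by your same argument, or directly from (\ref{eq:ic}).
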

\begin{pf}
Using (\ref{eq:rrcalP}) in (\ref{eq:sylv-calPn}) we obtain
\begin{multline*}
A(\overline{x}) (\mathscr{D}_{q,\omega}\mathcal{M}_n)(x) \mathcal{P}_{n-1}(x)+\mathcal{M}_n(\overline{x}) A(\overline{x})(\mathscr{D}_{q,\omega}\mathcal{P}_{n-1})(x)\\=\mathcal{A}_n(x) \mathcal{M}_n(x)\mathcal{P}_{n-1}(x)
-\mathcal{M}_n(\overline{x})\mathcal{P}_{n-1}(\overline{x})\mathcal{C}(x)\,.\end{multline*}
  Taking $n-1$ in (\ref{eq:sylv-calPn}), i.e., $A(\overline{x})(\mathscr{D}_{q,\omega}\mathcal{P}_{n-1})(x)=\mathcal{A}_{n-1}(x) \mathcal{P}_{n-1}(x)-\mathcal{P}_{n-1}(\overline{x})\mathcal{C}(x), $  and applying it in the above equation we obtain equation (\ref{eq:sylv-transfer}) after some straightforward simplifications.
\end{pf}

Note that the  matrix equation (\ref{eq:sylv-transfer}) encloses two  nontrivial equations  from positions $(1,1)$ and $(1,2)$, respectively, yielding
\begin{eqnarray}
A(\overline{x})&=&(x-\beta_n)L_n(x)+\Theta_n(x)-(\overline{x}-\beta_n)L_{n-1}(x)-\gamma_n\frac{\Theta_{n-2}(x)}{\gamma_{n-1}}\,,\notag\\
0&=&- L_n(x)-(\overline{x}-\beta_n)\frac{\Theta_{n-1}(x)}{\gamma_n}+L_{n-2}(x)+(x-\beta_{n-1})\frac{\Theta_{n-2}(x)}{\gamma_{n-1}}\,. \label{eq:pos1-2}
\end{eqnarray}

\begin{cor} The polynomials $L_n, \Theta_n$ in 
(\ref{eq:calAn-C}) satisfy
\begin{eqnarray}
&&L_{n-1}(x)+L_n(x)+(x-\beta_n)\frac{\Theta_{n-1}(x)}{\gamma_n}=C_1(x)-C_2(x)-\varsigma(x)\sum_{k=0}^{n}\frac{\Theta_{k-1}(x)}{\gamma_k}\,, \label{eq:trace}\\
&&L_n(x)\left(L_{n-1}(x)+(x-\beta_n)\frac{\Theta_{n-1}(x)}{\gamma_n}\right)+\Theta_n(x)\frac{\Theta_{n-1}(x)}{\gamma_n}\nonumber\\
&& \hspace{4cm}= -C_1(x)C_2(x)+B(x)D(x)+A(\overline{x})\sum_{k=0}^{n}\frac{\Theta_{k-1}(x)}{\gamma_k}\,. \label{eq:det}
\end{eqnarray}
\end{cor}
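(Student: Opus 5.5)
The plan is to read both identities as the trace and the determinant of the matrix $\mathcal{A}_n$ in (\ref{eq:calAn-C}):
$$\operatorname{tr}\mathcal{A}_n=L_n+L_{n-1}+(x-\beta_n)\frac{\Theta_{n-1}}{\gamma_n},\qquad
\det\mathcal{A}_n=L_n\Bigl(L_{n-1}+(x-\beta_n)\frac{\Theta_{n-1}}{\gamma_n}\Bigr)+\Theta_n\frac{\Theta_{n-1}}{\gamma_n}.$$
Each of these satisfies a first-order recursion in $n$ that comes from the transfer-matrix equation (\ref{eq:sylv-transfer}). Summing each recursion from the initial conditions (\ref{eq:ic}) and (\ref{eq:ic2}) should give (\ref{eq:trace}) and (\ref{eq:det}).

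\textbf{Rewriting (\ref{eq:sylv-transfer}).} Since $\mathcal{M}_n$ is linear in $x$ with only the $(1,1)$ entry varying, we have $(\mathscr{D}_{q,\omega}\mathcal{M}_n)(x)=E$, where $E=\left[\begin{smallmatrix}1&0\\0&0\end{smallmatrix}\right]$. By (\ref{eq:inv}), $\mathcal{M}_n(\overline{x})=\mathcal{M}_n(x)+\varsigma(x)E$. Moreover $\det\mathcal{M}_n=\gamma_n\neq0$ is a constant, so $\mathcal{M}_n^{-1}=\gamma_n^{-1}\left[\begin{smallmatrix}0&\gamma_n\\-1&x-\beta_n\end{smallmatrix}\right]$ is a polynomial matrix. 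Solving (\ref{eq:sylv-transfer}) for $\mathcal{A}_n$ then gives
$$\mathcal{A}_n=\bigl(A(\overline{x})E+\mathcal{M}_n(\overline{x})\mathcal{A}_{n-1}\bigr)\mathcal{M}_n^{-1}
=\mathcal{M}_n\mathcal{A}_{n-1}\mathcal{M}_n^{-1}+E\bigl(A(\overline{x})I+\varsigma\,\mathcal{A}_{n-1}\bigr)\mathcal{M}_n^{-1}.$$

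\textbf{Trace recursion.} Conjugation preserves the trace. For any $X$, the trace of $EX\mathcal{M}_n^{-1}$ is the first row of $X$ times the first column $(0,-1/\gamma_n)^T$ of $\mathcal{M}_n^{-1}$, which equals $-X_{12}/\gamma_n$. With $X=A(\overline{x})I+\varsigma\mathcal{A}_{n-1}$ we have $X_{12}=\varsigma\Theta_{n-1}$, hence
$$\operatorname{tr}\mathcal{A}_n=\operatorname{tr}\mathcal{A}_{n-1}-\varsigma\,\Theta_{n-1}/\gamma_n.$$
For the base case, (\ref{eq:ic}) and (\ref{eq:ic2}) give $\operatorname{tr}\mathcal{A}_0=C_1-C_2-(\overline{x}-\beta_0)D+(x-\beta_0)D=C_1-C_2-\varsigma\,\Theta_{-1}/\gamma_0$. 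Telescoping then yields (\ref{eq:trace}).

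\textbf{Determinant recursion.} Write $\det\mathcal{A}_n=\det\bigl(A(\overline{x})E+\mathcal{M}_n(\overline{x})\mathcal{A}_{n-1}\bigr)/\gamma_n$. Expanding this $2\times2$ determinant should reduce it to
$$\det\mathcal{A}_n=\det\mathcal{A}_{n-1}+A(\overline{x})\,\Theta_{n-1}/\gamma_n.$$
The cross term is the determinant with first row replaced by $A(\overline{x})e_1^T$ and second row equal to the second row $(1,0)\mathcal{A}_{n-1}$ of $\mathcal{M}_n(\overline{x})\mathcal{A}_{n-1}$. That determinant is $A(\overline{x})\Theta_{n-1}$, using $(\mathcal{A}_{n-1})_{12}=\Theta_{n-1}$. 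The remaining term is $\det\mathcal{M}_n(\overline{x})\det\mathcal{A}_{n-1}=\gamma_n\det\mathcal{A}_{n-1}$, and the $E\cdot E$ contribution vanishes since $\det E=0$. For the base case we need $\det\mathcal{A}_0=-C_1C_2+BD+A(\overline{x})D$. This follows by direct substitution of $L_0,\Theta_0,L_{-1}=C_1,\Theta_{-1}/\gamma_0=D$: the $(x-\beta_0)$ and $(\overline{x}-\beta_0)$ terms cancel. Alternatively, one can check the $n=0$ instance of the recursion from a formal $\mathcal{A}_{-1}=\mathcal{C}$-type start.

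\textbf{Main obstacle.} The main obstacle is the bookkeeping at the start of the induction ($n=0$, where $\gamma_0=1$ and the conventions (\ref{eq:ic2}) must be used consistently). I would verify the two base identities explicitly rather than rely on the recursion at $n=0$.
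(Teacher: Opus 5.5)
Your proposal is correct and follows essentially the same route as the paper: both identities come from telescoping the one-step recursions $\operatorname{tr}\mathcal{A}_n=\operatorname{tr}\mathcal{A}_{n-1}-\varsigma\Theta_{n-1}/\gamma_n$ and $\det\mathcal{A}_n=\det\mathcal{A}_{n-1}+A(\overline{x})\Theta_{n-1}/\gamma_n$, both derived from (\ref{eq:sylv-transfer}) and started from the same base values, and your determinant step matches the paper's exactly. The only cosmetic difference is in the trace step: you obtain the recursion through conjugation invariance and $\mathcal{M}_n^{-1}$, whereas the paper reads the same recursion off the $(1,2)$ entry of (\ref{eq:sylv-transfer}) using $\overline{x}-\beta_n=x-\beta_n+\varsigma(x)$.
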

\begin{pf}
To deduce (\ref{eq:trace}), we begin by  using $$\overline{x}-\beta_n=x-\beta_n+\varsigma(x)\,,$$ in equation (\ref{eq:pos1-2}). Then, we obtain
$$L_n(x)+(x-\beta_n)\frac{\Theta_{n-1}(x)}{\gamma_n}=L_{n-2}(x)+(x-\beta_{n-1})\frac{\Theta_{n-2}(x)}{\gamma_{n-1}}-\varsigma(x)\frac{\Theta_{n-1}(x)}{\gamma_n}.$$
Thus, adding $L_{n-1}$, we have,
for all $n \geq 1,$
\begin{multline*}L_{n-1}(x)+L_n(x)+(x-\beta_n)\frac{\Theta_{n-1}(x)}{\gamma_n}\\
=L_{n-2}(x)+L_{n-1}(x)+(x-\beta_{n-1})\frac{\Theta_{n-2}(x)}{\gamma_{n-1}}-\varsigma(x)\frac{\Theta_{n-1}(x)}{\gamma_n}\,.\end{multline*}
Iterating the above relation, we deduce
\begin{multline*}
L_{n-1}(x)+L_n(x)+(x-\beta_n)\frac{\Theta_{n-1}(x)}{\gamma_n}\\
=L_{-1}(x)+L_0(x)+(x-\beta_0)\Theta_{-1}(x)-\varsigma(x)\sum_{k=1}^{n}\frac{\Theta_{k-1}(x)}{\gamma_k}.
\end{multline*}
Thus, using  the initial conditions (\ref{eq:ic}), we obtain
$$L_{n-1}(x)+L_n(x)+(x-\beta_n)\frac{\Theta_{n-1}(x)}{\gamma_n}=C_1(x)-C_2(x)-\varsigma(x)D-\varsigma(x)\sum_{k=1}^{n}\frac{\Theta_{k-1}{(x)}}{\gamma_k}\,.$$
Furthermore, as $D=\frac{\Theta_{-1}}{\gamma_0}$, then the formula above  can  written as (\ref{eq:trace}).

To obtain the formula (\ref{eq:det})  we take determinants in (\ref{eq:sylv-transfer}). Then, for all $n \geq 0,$
$$\det(\mathcal{A}_n(x) \mathcal{M}_{n}(x))=\det\left(\mathcal{M}_{n}(\overline{x})\mathcal{A}_{n-1}(x)+A(\overline{x}) (\mathscr{D}_{q,\omega}\mathcal{M}_{n})(x)\right)\,.$$
Noting that $A(\overline{x}) (\mathscr{D}_{q,\omega}\mathcal{M}_{n})(x)=\begin{bmatrix}A(\overline{x}) & 0\\
0 & 0
\end{bmatrix}$, there follows
\begin{equation}\det(\mathcal{A}_n(x))\det(\mathcal{M}_{n}(x))=\det(\mathcal{M}_{n}(\overline{x}))\det(\mathcal{A}_{n-1}(x))+A(\overline{x}) \left[\mathcal{M}_{n}(\overline{x})\mathcal{A}_{n-1}(x)\right]_{(2,2)}\,,\label{eq:aux1-det}\end{equation}
where $\left[\mathcal{M}_{n}(\overline{x})\mathcal{A}_{n-1}(x)\right]_{(2,2)}$ denotes the element in position $(2,2)$ of the matrix $\mathcal{M}_{n}(\overline{x})\mathcal{A}_{n-1}(x)$.

Taking into account that $\det(\mathcal{M}_{n})=\gamma_n$ and $\left[\mathcal{M}_{n}(\overline{x})\mathcal{A}_{n-1}(x)\right]_{(2,2)}=\Theta_{n-1}(x), $
 by (\ref{eq:aux1-det}) we obtain, for all $n \geq 1,$
\begin{equation*}
\det(\mathcal{A}_{n}(x))=\det(\mathcal{A}_{n-1}(x))+A(\overline{x}) \frac{\Theta_{n-1}(x)}{\gamma_n}=\cdots=\det(\mathcal{A}_0(x))+A(\overline{x}) \sum_{k=1}^{n}\frac{\Theta_{k-1}(x)}{\gamma_k}.\label{eq:aux2-det}
\end{equation*}
 Using the initial conditions (\ref{eq:ic}), we have
 $$\det (\mathcal{A}_n(x)) = -C_1(x)C_2(x)+(A(\overline{x})+B(x))D(x)+ A(\overline{x})\sum_{k=1}^{n}\frac{\Theta_{k-1}(x)}{\gamma_k}\,, $$
 which we can write as (\ref{eq:det}) upon the identification $D=\frac{\Theta_{-1}}{\gamma_0}$.
  \end{pf}

\begin{rem}
Equations (\ref{eq:trace}) and (\ref{eq:det}) will be used to simplify the derivation of the hypergeometric type difference equation for the $(q,\omega)$--classical orthogonal polynomials (cf. Theorem~\ref{teo4-hyper}).
\end{rem}

\section{Difference  equations for Laguerre--Hahn orthogonal polynomials}\label{sec:5}

In this section we deduce the fourth--order differential equation satisfied by the orthogonal polynomials $\{P_n\}_{n \geq 0}$ related to a Stieltjes function such that equation (\ref{eq:defi-Ric-S}) holds. Furthermore, the cases when these polynomials correspond to classical and semiclassical families of orthogonal polynomials are also addressed.

 First, we observe that equations (\ref{eq:est-Pn})--(\ref{eq:est-Pn(1)})
can be rewritten as
\begin{align}
\tilde{A}_1({x})(\mathscr{D}_{q,\omega}P_{n+1})(x)&=\left(L_{n}-C_1\right)(x)P_{n+1}(x)+\Theta_{n}(x)P_{n}(x)-B(x)P^{(1)}_{n}(\overline{x})\,,\label{eq:(11)1}
\\
\tilde{A}_2({x})(\mathscr{D}_{q,\omega}P^{(1)}_{n})(x)&=\left(L_{n}+C_2\right)(x)P^{(1)}_{n}(x)+\Theta_{n}(x)P^{(1)}_{n-1}(x)+D(x)P_{n+1}(\overline{x})\,,\label{eq:(12)1}
\end{align}
where
\begin{equation}
\label{At12}
\tilde{A}_1(x)=A(\overline{x})+\varsigma(x)C_1(x)\,,\;\; \tilde{A}_2(x)=A(\overline{x})-\varsigma(x)C_2(x)\,.
\end{equation}
 Similarly, equations (\ref{eq:extra-est-Pn}) and (\ref{eq:extra-est-Pn(1)}) can be expressed as
\begin{align}
\tilde{A}_1({x})(\mathscr{D}_{q,\omega}P_{n})(x)&=(\tilde{L}_{n-1}-C_1)(x)P_{n}(x)-\frac{\Theta_{n-1}(x)}{\gamma_n} P_{n+1}(x)-B(x)P^{(1)}_{n-1}(\overline{x})\,,\label{eq:(21)1} \\
\tilde{A}_2({x})(\mathscr{D}_{q,\omega}P^{(1)}_{n-1})(x)&=(\tilde{L}_{n-1}+C_2)(x)P^{(1)}_{n-1}(x) -\frac{\Theta_{n-1}(x)}{\gamma_n}P^{(1)}_{n}(x)+D(x)P_{n}(\overline{x})\,,\label{eq:(22)1}
\end{align}
where we use the notation
\begin{equation}
\label{Lt}
\tilde{L}_{n-1}(x):=L_{n-1}(x)+(x-\beta_n)\frac{\Theta_{n-1}(x)}{\gamma_n}\,.
\end{equation}

Taking into account the above equations, we now establish the following key preliminary result.
\begin{teo} \label{teo3}
Let $\{P_n\}_{n\geq 0}$ be a SMOP related to a Stieltjes function  $S$  satisfying
\begin{equation*}
A(\overline{x}) (\mathscr{D}_{q,\omega} S)(x) = B(x)  S(x) S(\overline{x}) + C_1(x) S(x) + C_2(x) S(\overline{x}) + D(x) \,,\label{eq:defi-Ric-S}
\end{equation*}
where  $A, B, C_1, C_2, D$ are polynomials,  $A\not \equiv 0$.
Then,  the following equation holds:
\begin{multline}
\Theta_n(x)\tilde{A}_1(x)\tilde{A}_1({\overline{x}})(\mathscr{D}^2_{q,\omega}P_{n+1})(x) +J_n(x)(\mathscr{D}_{q,\omega}P_{n+1})(x)+
K_n(x) P_{n+1}(x)\\
=-B(x)\Theta_{n}(x)\Theta_{n}(\overline{x})P^{(1)}_{n-1}(\overline{x})+\left[B(x)T_n(x)-\tilde{A}_1(x)\Theta_{n}(x)(\mathscr{D}_{q,\omega}B)(x)\right]P^{(1)}_{n}(\overline{x})\\
-q\tilde{A}_1(x)B(\overline{x})\Theta_{n}(x) \left(\mathscr{D}_{q,\omega}P^{(1)}_{n}\right)(\overline{x})\,, \label{eq:key3}
\end{multline}
 with $\tilde{A}_1(x)$ defined by~(\ref{At12}),
  \begin{align*}
 J_n(x)&=\Theta_n(x)\tilde{A}_1(x)\left[(\mathscr{D}_{q,\omega}\tilde{A}_1)({x})-(L_{n}-C_1)({\overline{x}})\right]-\tilde{A}_1(x)T_n(x),\\
K_n(x)&=\Theta_n(x)\left[-\tilde{A}_1(x)\mathscr{D}_{q,\omega}(L_{n}-C_1)(x)+  \Theta_{n}(\overline{x}) \frac{\Theta_{n-1}(x)}{\gamma_n}\right]+T_n(x)(L_n-C_1)(x)\,,
\end{align*}
and
\begin{equation*}
T_n(x)= \tilde{A}_1(x){(\mathscr{D}_{q,\omega}\Theta_n)}(x)+\Theta_{n}(\overline{x}) (\tilde{L}_{n-1}-C_1)(x)\,.
 \end{equation*}
\end{teo}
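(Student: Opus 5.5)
The plan is to eliminate $P_n$ between the structure relation (\ref{eq:(11)1}) and its $\mathscr{D}_{q,\omega}$--derivative, using (\ref{eq:(21)1}) to express $\mathscr{D}_{q,\omega}P_n$. The associated polynomials then appear only through $B$.

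\textbf{Step 1: differentiate.} Write (\ref{eq:(11)1}) as
\[
\Theta_n(x)P_n(x)=\tilde{A}_1(x)(\mathscr{D}_{q,\omega}P_{n+1})(x)-(L_n-C_1)(x)P_{n+1}(x)+B(x)P^{(1)}_n(\overline{x}).
\]
Apply $\mathscr{D}_{q,\omega}$ to both sides using the product rule. On the left this gives
\[
(\mathscr{D}_{q,\omega}\Theta_n)(x)P_n(x)+\Theta_n(\overline{x})(\mathscr{D}_{q,\omega}P_n)(x).
\]
On the right, $\tilde{A}_1(x)\mathscr{D}_{q,\omega}P_{n+1}(x)$ produces $(\mathscr{D}_{q,\omega}\tilde{A}_1)(x)\,\mathscr{D}_{q,\omega}P_{n+1}(x)+\tilde{A}_1(\overline{x})\,\mathscr{D}^2_{q,\omega}P_{n+1}(x)$. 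For $(L_n-C_1)P_{n+1}$ I order the factors so that the factor evaluated at $\overline{x}$ is $(L_n-C_1)(\overline{x})$; this is what appears in $J_n$. For the last term I use the chain rule:
\[
\mathscr{D}_{q,\omega}\bigl(BP^{(1)}_n(\overline{x})\bigr)=(\mathscr{D}_{q,\omega}B)(x)P^{(1)}_n(\overline{x})+qB(\overline{x})(\mathscr{D}_{q,\omega}P^{(1)}_n)(\overline{x}).
\]

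\textbf{Step 2: remove $\mathscr{D}_{q,\omega}P_n$ and $P_n$.} Multiply the differentiated identity by $\tilde{A}_1(x)$. Replace $\tilde{A}_1(x)\mathscr{D}_{q,\omega}P_n(x)$ by the right-hand side of (\ref{eq:(21)1}); this brings in $(\tilde{L}_{n-1}-C_1)P_n$, $-\frac{\Theta_{n-1}}{\gamma_n}P_{n+1}$ and $-BP^{(1)}_{n-1}(\overline{x})$. The coefficient of $P_n$ is then exactly
\[
\tilde{A}_1(\mathscr{D}_{q,\omega}\Theta_n)+\Theta_n(\overline{x})(\tilde{L}_{n-1}-C_1)=T_n.
\]
Multiply everything by $\Theta_n(x)$ and substitute $\Theta_nP_n$ from (\ref{eq:(11)1}) once more. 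After this no $P_n$ remains. The $P^{(1)}_{n-1}(\overline{x})$ term contributes $-B\Theta_n\Theta_n(\overline{x})P^{(1)}_{n-1}(\overline{x})$. The term $T_nBP^{(1)}_n(\overline{x})$ combines with $-\tilde{A}_1\Theta_n(\mathscr{D}_{q,\omega}B)P^{(1)}_n(\overline{x})$ to give the stated bracket, and the term $-q\tilde{A}_1B(\overline{x})\Theta_n(\mathscr{D}_{q,\omega}P^{(1)}_n)(\overline{x})$ appears as claimed.

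\textbf{Step 3: collect coefficients.} Gather the coefficients of $\mathscr{D}^2P_{n+1}$, $\mathscr{D}P_{n+1}$ and $P_{n+1}$ and compare with $\Theta_n\tilde{A}_1\tilde{A}_1(\overline{x})$, $J_n$ and $K_n$.

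The main obstacle is bookkeeping: evaluating each factor at the correct point ($x$ versus $\overline{x}$) in every application of the product rule, and tracking the overall sign. I would fix the sign by checking that the $\mathscr{D}^2$ coefficient comes out as $+\Theta_n\tilde{A}_1\tilde{A}_1(\overline{x})$, multiplying through by $-1$ if needed. I would then verify $K_n$ term by term, in particular the $\Theta_n(\overline{x})\Theta_{n-1}/\gamma_n$ contribution coming from (\ref{eq:(21)1}).
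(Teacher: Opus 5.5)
Your proposal is correct and follows the paper's proof step for step. You differentiate (\ref{eq:(11)1}) with the product and chain rules, multiply by $\tilde{A}_1(x)$, and eliminate $\mathscr{D}_{q,\omega}P_n$ via (\ref{eq:(21)1}); you then multiply by $\Theta_n(x)$ and eliminate $P_n$ via (\ref{eq:(11)1}), which yields $J_n$, $K_n$ and $T_n$ exactly as stated, with no sign adjustment needed.
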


\begin{pf}
Taking derivatives in  (\ref{eq:(11)1}) we  have
\begin{multline}
(\mathscr{D}_{q,\omega}\tilde{A}_1)({x})(\mathscr{D}_{q,\omega}P_{n+1})(x)+\tilde{A}_1({\overline{x}})(\mathscr{D}^2_{q,\omega}P_{n+1})(x)\\
=\mathscr{D}_{q,\omega}\left(L_{n}-C_1\right)(x) P_{n+1}(x)+\left(L_{n}-C_1\right)({\overline{x}})(\mathscr{D}_{q,\omega}P_{n+1})(x)+
(\mathscr{D}_{q,\omega}\Theta_{n})(x)P_{n}(x)\\+\Theta_{n}(\overline{x})(\mathscr{D}_{q,\omega}P_n)(x)
-(\mathscr{D}_{q,\omega}B)(x) P^{(1)}_{n}(\overline{x})-B(\overline{x})(\mathscr{D}_{q,\omega}P^{(1)}_{n}(\overline{x}))(x)\,.\label{eq:1}
\end{multline}
Multiplying (\ref{eq:1}) by $\tilde{A}_1(x)$ and using (\ref{eq:(21)1}) in the resulting equation then we  eliminate $(\mathscr{D}_{q,\omega}P_n)(x),$  and obtain
\begin{align*}
&\tilde{A}_1(x)\tilde{A}_1({\overline{x}})(\mathscr{D}^2_{q,\omega}P_{n+1})(x)
+\tilde{A}_1(x)\left[(\mathscr{D}_{q,\omega}\tilde{A}_1)(x)-(L_{n}-C_1)({\overline{x}})\right](\mathscr{D}_{q,\omega}P_{n+1})(x)\\
&+\left[-\tilde{A}_1(x)\mathscr{D}_{q,\omega}(L_{n}-C_1)(x)+  \Theta_{n}(\overline{x}) \frac{\Theta_{n-1}(x)}{\gamma_n}\right]P_{n+1}(x)\\
&+\left[ -(\mathscr{D}_{q,\omega}\Theta_{n})(x)\tilde{A}_1(x)-\Theta_{n}(\overline{x})(\tilde{L}_{n-1}-C_1)(x) \right]P_{n}(x)\\
&=-B(x)\Theta_{n}(\overline{x})P^{(1)}_{n-1}(\overline{x})-\tilde{A}_1(x)(\mathscr{D}_{q,\omega}B)(x)P^{(1)}_{n}(\overline{x})
-\tilde{A}_1(x)B(\overline{x})q \left(\mathscr{D}_{q,\omega}P^{(1)}_{n}\right)(\overline{x})\,.
\end{align*}
 Finally, we multiply the above equation by $\Theta_n(x)$ and use (\ref{eq:(11)1}), i.e.,
\begin{equation*}
\Theta_{n}(x)P_{n}(x)=\tilde{A}_1({x})(\mathscr{D}_{q,\omega}P_{n+1})(x)-\left(L_{n}-C_1\right)(x)P_{n+1}(x)+B(x)P^{(1)}_{n}(\overline{x})\,,
\end{equation*}
to eliminate the term $P_n$, which leads to equation (\ref{eq:key3}).
\end{pf}

\subsection{The semiclassical setting and the  hypergeometric type difference equations}

In what follows, we establish the second--order difference equation for semiclassical and classical orthogonal polynomials. This result is obtained applying Theorem \ref{teo3} by setting $B \equiv 0$ and $C_2\equiv 0$ therein.
For ease of exposition, we slightly modify the notation throughout this subsection, that is, we take $C_1:=C$ and $\tilde{A}(x):=\tilde{A}_1(x).$ Therefore,
\begin{equation}
\label{At}
\tilde{A}(x):=A(\overline{x})+\varsigma(x)C(x)\,.
\end{equation}
\begin{cor}  \label{cor-2ndorder}
Let $\{P_n\}_{n\geq 0}$ be a sequence of monic semiclassical orthogonal polynomials related to a Stieltjes function  $S$  satisfying
\begin{equation}
A(\overline{x})( \mathscr{D}_{q,\omega} S)(x) =  C(x) S(x)  + D(x) \,,\label{eq:sc-Ric-S}
\end{equation}
where $A, C, D$ are polynomials.
 Then,  $\{P_n\}_{n\geq 0}$  satisfies the second--order difference equation
\begin{equation}
\widehat{A}_n(x)(\mathscr{D}^{2}_{q,\omega}P_{n+1})(x)+\widehat{B}_n(x)(\mathscr{D}_{q,\omega}P_{n+1})(x)+\widehat{C}_n(x)P_{n+1}(x)=0\,, \label{eq:2ndorder-sc}
\end{equation}
where
  \begin{eqnarray}
  \widehat{A}_n(x)&=&\Theta_n(x)\tilde{A}(x)\tilde{A}(\overline{x})\,,\label{eq:hatAn}\\
 \widehat{B}_n(x)&=&\Theta_n(x)\tilde{A}(x)\left[(\mathscr{D}_{q,\omega}\tilde{A})({x})-(L_{n}-C)({\overline{x}})\right]\nonumber\\
&& \hspace{1cm}- \tilde{A}(x)\left[\tilde{A}(x){(\mathscr{D}_{q,\omega}\Theta_n)}(x) + \Theta_{n}(\overline{x})(\tilde{L}_{n-1}-C)(x)\right]\,,\label{eq:hatBn}\\
\widehat{C}_n(x)&=&\Theta_n(x)\left[-\tilde{A}(x)\mathscr{D}_{q,\omega}(L_{n}-C)(x)+  \Theta_{n}(\overline{x}) \frac{\Theta_{n-1}(x)}{\gamma_n}\right]\nonumber\\
&&+\left[\tilde{A}(x)(\mathscr{D}_{q,\omega}\Theta_n)(x)+\Theta_{n}(\overline{x}) (\tilde{L}_{n-1}-C)(x)\right](L_n-C)(x)\,, \label{eq:hatCn}
\end{eqnarray}
and $\tilde{A}(x)$ is given by~(\ref{At}).
\end{cor}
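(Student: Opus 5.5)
The plan is to specialise Theorem~\ref{teo3} directly. Equation (\ref{eq:sc-Ric-S}) is the particular case of (\ref{eq:defiRic-S}) obtained with $B\equiv 0$, $C_1:=C$ and $C_2\equiv 0$. So the first step is simply to check that all hypotheses of Theorem~\ref{teo3} hold with this choice of coefficients. The Stieltjes function $S$ satisfies (\ref{eq:defiRic-S}) with $A\not\equiv 0$. The structure relations of Theorem~\ref{teo2} and of the subsequent lemma are therefore available, with polynomials $L_n$, $\Theta_n$ and $\tilde L_{n-1}$ defined as in (\ref{Lt}). With $C_1=C$, the polynomial $\tilde A_1$ of (\ref{At12}) becomes $\tilde A_1(x)=A(\overline{x})+\varsigma(x)C(x)$, which is exactly $\tilde A(x)$ in (\ref{At}).

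Next I substitute $B\equiv 0$ into (\ref{eq:key3}). Every term on its right--hand side carries a factor $B(x)$, $B(\overline{x})$ or $(\mathscr{D}_{q,\omega}B)(x)$, and each of these vanishes identically. Hence the right--hand side is zero, and the associated polynomials $P^{(1)}_{n-1}$ and $P^{(1)}_n$ drop out of the equation entirely. What remains is
$$\Theta_n\tilde A(x)\tilde A(\overline{x})\mathscr{D}^2_{q,\omega}P_{n+1}+J_n\mathscr{D}_{q,\omega}P_{n+1}+K_nP_{n+1}=0.$$
The remaining work is to identify coefficients: $\widehat A_n=\Theta_n\tilde A(x)\tilde A(\overline{x})$, $\widehat B_n=J_n$ and $\widehat C_n=K_n$. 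Expanding $J_n$ and $K_n$ from Theorem~\ref{teo3} and inserting the definition of $T_n$ with $C_1=C$ gives precisely (\ref{eq:hatBn}) and (\ref{eq:hatCn}). Note that $T_n$ itself contains no $B$, so nothing further changes in these coefficients.

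I do not expect a serious obstacle here; the argument is a direct specialisation. The one point deserving care is whether the structure relations (\ref{eq:(11)1}) and (\ref{eq:(21)1}), which are used in the proof of Theorem~\ref{teo3}, still make sense in the degenerate case. With $B\equiv 0$ they no longer involve $P^{(1)}$ at all, so the elimination of $P_n$ goes through unchanged. If $\Theta_n\equiv 0$ for some $n$, equation (\ref{eq:2ndorder-sc}) would be trivial. In that situation I would appeal instead to (\ref{eq:(11)1}) with $B=0$ and $\Theta_n=0$, which gives a first--order relation for $P_{n+1}$. I would remark on this case rather than treat it as part of the claim.
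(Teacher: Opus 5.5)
Your proposal is correct and follows the paper's own argument: specialise Theorem~\ref{teo3} to $B\equiv 0$, $C_1=C$, $C_2\equiv 0$, so that $\tilde A_1=\tilde A$, the right-hand side of (\ref{eq:key3}) vanishes, and you read off $\widehat{B}_n=J_n$, $\widehat{C}_n=K_n$. Your side remark on $\Theta_n\equiv 0$ is harmless: in that case all three coefficients vanish, so (\ref{eq:2ndorder-sc}) holds trivially (the paper does not comment on this case).
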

\begin{pf}
 It is enough to take $B \equiv 0$ in Theorem \ref{teo3}.
 \end{pf}

Furthermore, for classical families of orthogonal polynomials, the  restrictions on the degrees of the coefficients of the difference equation (\ref{eq:sc-Ric-S})  allow to simplify equation (\ref{eq:2ndorder-sc}), as shown in the following corollary.

\begin{cor}  \label{cor-2ndorder-classical}
Let $\{P_n\}_{n\geq 0}$ be a a SMOP related to a Stieltjes function  $S$  satisfying equation (\ref{eq:sc-Ric-S}) with the following restrictions on the coefficients:
$$\deg(A) \leq 2, \quad\deg(C)=1\,.$$
 Then, the classical polynomials $\{P_n\}_{n\geq 0}$  satisfy the second--order difference equation
\begin{equation}
\tilde{A}(\overline{x})(\mathscr{D}^{2}_{q,\omega}P_{n+1})(x)+\left[(\mathscr{D}_{q,\omega}\tilde{A})({x})+C(x)+
\varsigma(x)b_n\right]
(\mathscr{D}_{q,\omega}P_{n+1})(x)+{b}_n P_{n+1}(x)=0\,,\label{eq:raw-hyper}
\end{equation}
where the  constants ${b}_n$ are given by
  \begin{equation}
{b}_{n}=\sum_{k=0}^{n}\frac{\Theta_{k-1}}{\gamma_k}-\mathscr{D}_{q,\omega}(L_n-C)(x)\,.\label{eq:tildebn}
  \end{equation}
\end{cor}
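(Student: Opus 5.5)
The plan is to specialize Corollary~\ref{cor-2ndorder} (so $B\equiv 0$, $C_2\equiv 0$, $C_1=C$) and use the degree restrictions to show that every $n$--dependent polynomial coefficient there collapses. Then equation (\ref{eq:2ndorder-sc}) can be divided by the common factor $\Theta_n(x)\tilde{A}(x)$.

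\textbf{Step 1 (degrees).} By (\ref{bound deg Thetan}) with $\deg A\le 2$ and $\deg C=1$, we get $\deg\Theta_n\le \max\{\deg A-2,\deg C-1\}=0$, so $\Theta_n$ is a constant. Consequently $\mathscr{D}_{q,\omega}\Theta_n=0$ and $\Theta_n(\overline{x})=\Theta_n$. By (\ref{bound deg Ln}), $\deg L_n\le 1$, so $\mathscr{D}_{q,\omega}(L_n-C)$ is a constant. In particular $b_n$ in (\ref{eq:tildebn}) is a genuine constant.

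\textbf{Step 2 (simplify $\widehat B_n$).} With $\Theta_n$ constant, (\ref{eq:hatAn})--(\ref{eq:hatCn}) become
\[
\widehat{B}_n=\Theta_n\tilde{A}(x)\Bigl[(\mathscr{D}_{q,\omega}\tilde{A})(x)-(L_n-C)(\overline{x})-(\tilde{L}_{n-1}-C)(x)\Bigr].
\]
I would use the invariance (\ref{eq:inv}) to write $(L_n-C)(\overline{x})=(L_n-C)(x)+\varsigma(x)\mathscr{D}_{q,\omega}(L_n-C)$. The trace identity (\ref{eq:trace}) with $C_2\equiv 0$ reads $\tilde{L}_{n-1}+L_n=C-\varsigma\sum_{k=0}^n\Theta_{k-1}/\gamma_k$. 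Together these give
\[
(L_n-C)(\overline{x})+(\tilde{L}_{n-1}-C)(x)=-C(x)-\varsigma(x)b_n ,
\]
so $\widehat B_n=\Theta_n\tilde{A}(x)\bigl[(\mathscr{D}_{q,\omega}\tilde{A})(x)+C(x)+\varsigma(x)b_n\bigr]$.

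\textbf{Step 3 (simplify $\widehat C_n$).} Here
\[
\widehat{C}_n=\Theta_n\Bigl[-\tilde{A}\,\mathscr{D}_{q,\omega}(L_n-C)+\Theta_n\frac{\Theta_{n-1}}{\gamma_n}+(\tilde{L}_{n-1}-C)(L_n-C)\Bigr].
\]
Expand the product as $L_n\tilde{L}_{n-1}-C(L_n+\tilde{L}_{n-1})+C^2$. Replace $L_n\tilde{L}_{n-1}+\Theta_n\Theta_{n-1}/\gamma_n$ by $A(\overline{x})\sum_{k=0}^n\Theta_{k-1}/\gamma_k$ using the determinant identity (\ref{eq:det}) with $B\equiv C_2\equiv0$, and replace $L_n+\tilde{L}_{n-1}$ using (\ref{eq:trace}). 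The $C^2$ terms cancel and what remains is $(A(\overline{x})+\varsigma C)\sum_{k=0}^n\Theta_{k-1}/\gamma_k=\tilde{A}\sum_{k=0}^n\Theta_{k-1}/\gamma_k$. Hence $\widehat C_n=\Theta_n\tilde{A}(x)\,b_n$, and $\widehat A_n=\Theta_n\tilde A(x)\tilde A(\overline x)$ already has this factor.

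\textbf{Step 4 (division).} Dividing (\ref{eq:2ndorder-sc}) by $\Theta_n\tilde{A}(x)$ yields (\ref{eq:raw-hyper}). Division by the polynomial $\tilde A\not\equiv0$ is harmless. The real obstacle is justifying $\Theta_n\neq 0$. I would first try to prove it directly. If $\Theta_n=0$, then (\ref{eq:(11)1}) gives $\tilde{A}\,\mathscr{D}_{q,\omega}P_{n+1}=(L_n-C)P_{n+1}$. Comparing leading coefficients, and using that $P_{n+1}$ must divide a polynomial of degree at most $n+1$ proportional to $\mathscr{D}_{q,\omega}P_{n+1}$ times a quadratic, this should contradict regularity, e.g.\ via (\ref{eq:det}) and the nonvanishing of $\gamma_n$. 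If that argument becomes messy, the fallback is to redo the elimination in the proof of Theorem~\ref{teo3} without the final multiplication by $\Theta_n$: with $\Theta_n$ constant, the coefficient of $P_n$ there is $-\Theta_n(\tilde L_{n-1}-C)$. One then eliminates $P_n$ through (\ref{eq:(11)1}) after dividing by $\Theta_n$, which only requires $\Theta_n\ne0$ in the same place. So I expect this nondegeneracy check to be the delicate point, while Steps 1--3 are routine algebra.
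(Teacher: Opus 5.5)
Your Steps 1--3 are the paper's proof. It also observes that $\Theta_n$ is constant and $\deg L_n\le 1$. It rewrites $\widehat B_n$ via (\ref{eq:inv}) and (\ref{sumLn}), reduces $\widehat C_n$ to $\tilde A(x)\,b_n$ via (\ref{eq:det}) and (\ref{sumLn}), and then cancels the common factor. The paper cancels $\Theta_n$ without comment; its ``$\deg(\Theta_n)=0$'' tacitly means a nonzero constant. So the point you flag in Step~4 is assumed there too, but your Step~4 does not settle it either.

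The direct argument you sketch cannot work as stated. In (\ref{eq:(11)1}) the term $\Theta_nP_n$ only affects coefficients of degree $\le n$. Setting $\Theta_n=0$ therefore leaves the $x^{n+2}$ and $x^{n+1}$ coefficient identities untouched. Moreover, $P_{n+1}\mid\tilde A\,\mathscr{D}_{q,\omega}P_{n+1}$ is not contradictory by itself (e.g.\ $\mathscr{D}_{q,\omega}(x-\omega_0)^{n+1}=[n+1]_q(x-\omega_0)^n$). As you note, the fallback divides by $\Theta_n$ at the same place.

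What works is to evaluate the constant $\check\Theta_n$ from its $\varepsilon_{n+1}$-expression in the proof of Theorem~\ref{teo2}, letting $x\to\infty$ with $\varepsilon_{n+1}(x)=\gamma_1\cdots\gamma_{n+1}\,x^{-n-2}+O(x^{-n-3})$. With $B\equiv C_2\equiv 0$ this gives
\[
\Theta_n=-\gamma_{n+1}q^{-n}\bigl(a_2[2n+3]_q+c_1q^{2n+1}\bigr)=-\gamma_{n+1}q^{-n}\bigl(a_2[2n+1]_q+\psi_1q^{2n+1}\bigr).
\]
Here $a_2$ is the $x^2$-coefficient of $A$, $c_1$ is the leading coefficient of $C$, and $\psi_1=c_1+(1+q)a_2$ is the leading coefficient of $\Psi=\mathscr{D}_{q,\omega}A+C$. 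In the limit $q\to1^-$, $\omega\to0^+$ it gives $\Theta_n=n+1$ for monic Hermite, as it should.

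Hence $\Theta_n\neq0$ is equivalent to $a_2[2n+1]_q+\psi_1q^{2n+1}\neq0$. This is the $(q,\omega)$-analogue of the classical regularity condition $(2n+1)a_2+\psi_1\neq 0$. It has to be invoked from the regularity theory of classical functionals, or assumed, since neither your argument nor the paper derives it from $H_n\neq0$. Once $\Theta_n\neq0$ is known, $\tilde A\not\equiv 0$ also follows from (\ref{eq:(11)1}); otherwise $P_{n+1}$ would divide $\Theta_nP_n$, which has lower degree.
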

\begin{pf}The degree restrictions $\deg(A) \leq 2, \;\deg(C)=1$, imply that $\deg(\Theta_n)=0$.  Hence, $\Theta_n$ is a constant and we can omit its dependence on $x$ throughout this proof. Consequently, $\Theta_n$ can be removed from the coefficients $\widehat{A}_n$, $\widehat{B}_n$, $\widehat{C}_n$ given by (\ref{eq:hatAn})--(\ref{eq:hatCn}), allowing these coefficients to be further simplified.

First, we simplify $\widehat{B}_n$. From (\ref{eq:hatBn}), it holds
$$\widehat{B}_n(x)=\tilde{A}(x)\left[(\mathscr{D}_{q,\omega}\tilde{A})({x})-(L_{n}-C)({\overline{x}})-\tilde{L}_{n-1}(x)+C(x)\right]\,,$$
and using the invariance   property (\ref{eq:inv}),  we  have that
\begin{equation*}
\widehat{B}_n(x)=\tilde{A}(x)\left[(\mathscr{D}_{q,\omega}\tilde{A})({x})-(L_{n}(x)+\tilde{L}_{n-1}(x))+2C(x)
-\varsigma(x)\mathscr{D}_{q,\omega}(L_n-C)({x}) \right].
\end{equation*}
 On the other hand, by (\ref{eq:trace}) and~(\ref{Lt}) we deduce that
\begin{equation}
L_{n}(x)+\tilde{L}_{n-1}(x)=C(x)-\varsigma(x)\sum_{k=0}^{n}\frac{\Theta_{k-1}}{\gamma_k},\label{sumLn}
\end{equation}
 so $\widehat{B}_n$ can be simplified as
\begin{align*}
\widehat{B}_n(x)&=\tilde{A}(x)\left[(\mathscr{D}_{q,\omega}\tilde{A})({x})+C(x)+\varsigma(x)b_n \right],
\end{align*}
 where $b_n$ is defined by~(\ref{eq:tildebn}). We note that $b_n$  are constants, independent of $x,$ since \mbox{$\deg(L_n)\leq1$}, as follows from~(\ref{bound deg Ln}).

Let us now simplify $\widehat{C}_n$. Using (\ref{eq:hatCn}), we obtain
$$\widehat{C}_n(x)=-\tilde{A}(x)\mathscr{D}_{q,\omega}(L_{n}-C)(x)+  \Theta_{n} \frac{\Theta_{n-1}}{\gamma_n}+(L_n-C)(x)(\tilde{L}_{n-1}-C)(x)\,.
$$
Next, from~(\ref{eq:ic2}),~(\ref{eq:det}),~(\ref{Lt}), we  derive
$$  \Theta_{n} \frac{\Theta_{n-1}}{\gamma_n}+L_n(x)\tilde{L}_{n-1}(x)=A(\overline{x})D(x)+A(\overline{x})\sum_{k=1}^{n}\frac{\Theta_{k-1}}{\gamma_k}\,,$$
and taking into account~(\ref{sumLn}),  $\widehat{C}_n$ simplifies as
\begin{align*}
 \widehat{C}_n(x)= &-\tilde{A}(x)\mathscr{D}_{q,\omega}(L_{n}-C)(x)+A(\overline{x})D(x)+A(\overline{x})\sum_{k=1}^{n}\frac{\Theta_{k-1}}{\gamma_k}\\
 &-C(x)
 \left(C(x)-\varsigma(x)\sum_{k=0}^{n}\frac{\Theta_{k-1}}{\gamma_k}\right)+C^2(x).
  \end{align*}
 After some computations,
we obtain
\begin{align*}
\widehat{C}_n(x)= \tilde{A}(x)b_n.
\end{align*}
Thus, we obtain the following second--order difference equation
\begin{equation*}
\tilde{A}({x})\left[\tilde{A}(\overline{x})(\mathscr{D}^{2}_{q,\omega}P_{n+1})(x)+\left[(\mathscr{D}_{q,\omega}\tilde{A})({x})
+C(x)+\varsigma(x)b_n\right]
(\mathscr{D}_{q,\omega}P_{n+1})(x)+{b}_n P_{n+1}(x)\right]=0.
\end{equation*}
Finally, since $\deg\tilde{A}({x})$ is fixed,
 we obtain  equation~(\ref{eq:raw-hyper}).
\end{pf}

The following  lemma will be used to show further  simplifications, namely, the hypergeometric form of equation (\ref{eq:raw-hyper}) to be deduced in Theorem \ref{teo4-hyper}.
\begin{lema}
Let $A,\,C $ be the polynomials given in equation~(\ref{eq:sc-Ric-S}), $\tilde{A}$ the polynomial defined in~(\ref{At}), and $\Psi(x)=(\mathscr{D}_{q,\omega}{A})(x)+C(x)$. Then, the following identities hold:
\begin{gather}
(\mathscr{D}_{q,\omega}\tilde{A})({x})+C(x)=q\Psi(\overline{x})\,, \label{eq:simp-aux-pearson}\\
\tilde{A}(\overline{x})=A(\overline{x})+q\varsigma(x)\Psi(\overline{x})\,. \label{eq:simp-aux-tildeA}
\end{gather}
\end{lema}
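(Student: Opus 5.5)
Both identities follow by direct computation from three earlier tools: the chain rule $\mathscr{D}_{q,\omega}[f(\overline{x})](x)=q(\mathscr{D}_{q,\omega}f)(\overline{x})$, the product rule of the Proposition from \cite{anaby}, and the invariance property (\ref{eq:inv}) together with $\varsigma(\overline{x})=q\varsigma(x)$. No orthogonality or structure relations are needed, since the statement is purely about the operator $\mathscr{D}_{q,\omega}$ acting on polynomials.

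For (\ref{eq:simp-aux-pearson}), I will differentiate $\tilde{A}(x)=A(\overline{x})+\varsigma(x)C(x)$ term by term.
\begin{itemize}
\item The chain rule gives $\mathscr{D}_{q,\omega}[A(\overline{x})](x)=q(\mathscr{D}_{q,\omega}A)(\overline{x})$.
\item The product rule with $f=\varsigma$, $g=C$ gives $\mathscr{D}_{q,\omega}(\varsigma C)(x)=C(x)\,\mathscr{D}_{q,\omega}\varsigma(x)+\varsigma(\overline{x})\,(\mathscr{D}_{q,\omega}C)(x)$.
\item Since $\varsigma$ is linear with leading coefficient $q-1$, we have $\mathscr{D}_{q,\omega}\varsigma=q-1$. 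With $\varsigma(\overline{x})=q\varsigma(x)$, the previous expression becomes $(q-1)C(x)+q\varsigma(x)(\mathscr{D}_{q,\omega}C)(x)$.
\end{itemize}
Adding $C(x)$ then yields
\[
(\mathscr{D}_{q,\omega}\tilde{A})(x)+C(x)=q(\mathscr{D}_{q,\omega}A)(\overline{x})+q\bigl[C(x)+\varsigma(x)(\mathscr{D}_{q,\omega}C)(x)\bigr].
\]
By (\ref{eq:inv}) the bracket equals $C(\overline{x})$, so the right-hand side is $q\Psi(\overline{x})$.

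For (\ref{eq:simp-aux-tildeA}), I will evaluate $\tilde{A}$ at $\overline{x}$, which gives $\tilde{A}(\overline{x})=A(\overline{\overline{x}})+\varsigma(\overline{x})C(\overline{x})$.
\begin{itemize}
\item Apply (\ref{eq:inv}) at the point $\overline{x}$: $A(\overline{\overline{x}})=A(\overline{x})+\varsigma(\overline{x})(\mathscr{D}_{q,\omega}A)(\overline{x})$.
\item Replace $\varsigma(\overline{x})$ by $q\varsigma(x)$ in both terms.
\end{itemize}
This produces $A(\overline{x})+q\varsigma(x)\bigl[(\mathscr{D}_{q,\omega}A)(\overline{x})+C(\overline{x})\bigr]=A(\overline{x})+q\varsigma(x)\Psi(\overline{x})$.

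The only point requiring care, and the step I expect to be the main obstacle, is applying the product rule in the correct order. The factor that gets shifted to $\overline{x}$ must be chosen so that $\varsigma(\overline{x})=q\varsigma(x)$ appears, producing the common factor $q$. Only then can (\ref{eq:inv}) be used to recognize $C(x)+\varsigma(x)(\mathscr{D}_{q,\omega}C)(x)$ as $C(\overline{x})$. The polynomial identities hold pointwise for $x\neq\omega_0$ and extend to $x=\omega_0$ by continuity.
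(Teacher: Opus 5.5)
Your proof is correct and essentially the paper's: the first identity is obtained exactly as there (chain rule, product rule with $(\mathscr{D}_{q,\omega}\varsigma)(x)=q-1$ and $\varsigma(\overline{x})=q\varsigma(x)$, then (\ref{eq:inv}) to recognize $C(\overline{x})$). For the second identity you expand $\tilde{A}(\overline{x})=A(\overline{\overline{x}})+\varsigma(\overline{x})C(\overline{x})$ directly, whereas the paper writes $\tilde{A}(\overline{x})=\tilde{A}(x)+\varsigma(x)(\mathscr{D}_{q,\omega}\tilde{A})(x)$ and substitutes (\ref{eq:simp-aux-pearson}); this is an inessential variation that only makes your second identity independent of the first.

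The step you flag as the main obstacle is not one. With the other ordering, $\mathscr{D}_{q,\omega}(C\varsigma)(x)=\varsigma(x)(\mathscr{D}_{q,\omega}C)(x)+(q-1)C(\overline{x})$, and (\ref{eq:inv}) applied to $C(x)+\varsigma(x)(\mathscr{D}_{q,\omega}C)(x)$ again gives $q(\mathscr{D}_{q,\omega}A)(\overline{x})+qC(\overline{x})$.
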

\begin{pf}
Using~(\ref{eq:inv}), we obtain
\begin{eqnarray*}
(\mathscr{D}_{q,\omega}\tilde{A})({x})+C(x)=q\left(\mathscr{D}_{q,\omega}A\right)(\overline{x})+\left(\mathscr{D}_{q,\omega}\varsigma\right)(x)C(x)
+\varsigma(\overline{x})(\mathscr{D}_{q,\omega}C)(x)  \,+\, C(x)\,.
\end{eqnarray*}
Taking into account that $(\mathscr{D}_{q,\omega}\varsigma)(x)=q-1,$ $\varsigma(\overline{x})=q\varsigma(x)$ and using~(\ref{eq:inv}) once more, the following equation holds,
$$(\mathscr{D}_{q,\omega}\tilde{A})({x})+C(x)=q \left(\mathscr{D}_{q,\omega}{A}+C\right)(\overline{x})\,,$$
which gives us the identity (\ref{eq:simp-aux-pearson}).

To deduce (\ref{eq:simp-aux-tildeA}) we firstly note that, by virtue of the invariance  property (\ref{eq:inv}), we have $\tilde{A}(\overline{x})=\tilde{A}(x)+\varsigma(x)(\mathscr{D}_{q,\omega}\tilde{A})(x)$.
Thus, using (\ref{eq:simp-aux-pearson}), we  obtain (\ref{eq:simp-aux-tildeA}). 
\end{pf}

\begin{teo}\label{teo4-hyper}
Let $\varrho$ be a semiclassical weight satisfying the Pearson equation (\ref{eq:Pearson}), \begin{equation*}
\mathscr{D}_{q,\omega}(A \varrho)(x)=\Psi(x)\varrho(x)\,,
\end{equation*}  with
$\lim_{x \to a, b}A(x) \varrho(x)=0$ at the endpoints of the support of $\varrho$. Let the degrees of $A, \Psi$ be such that
 $\deg(A) \leq 2, \;\deg(\Psi)=1\,.$ Then, the following  hypergeometric type difference equation holds, for all $n \geq 0$:
  \begin{equation*}
 A({x})\left(\mathscr{D}_{q,\omega}\mathscr{D}_{\frac{1}{q},-\frac{\omega}{q}} P_{n+1}\right)(x)
+\Psi({x})(\mathscr{D}_{q,\omega}P_{n+1})({x})+\lambda_{n+1}  P_{n+1}({x})=0\,,\label{eq:hyper}
\end{equation*}
 with  $\lambda_{n+1}=\displaystyle \frac{b_n}{q},$ where the constants $b_n$ are given by~(\ref{eq:tildebn}).
  \end{teo}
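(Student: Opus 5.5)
The plan is to start from the second--order equation (\ref{eq:raw-hyper}) of Corollary~\ref{cor-2ndorder-classical}, and then use the invariance property (\ref{eq:inv}) and a shift of the variable $x\mapsto \overline{x}^{\;-1}$ to reach the hypergeometric form.

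\textbf{Step 1: reduction to Corollary~\ref{cor-2ndorder-classical}.} By Lemma~\ref{lemma:ric-sc}, the Stieltjes transform of $\varrho$ satisfies (\ref{eq:sc-Ric-S}) with $C=\Psi-\mathscr{D}_{q,\omega}A$ and $D=\mathscr{D}_{q,\omega}\pi(A;x)-\pi(\Psi;x)$. Since $\deg(A)\le 2$, we have $\deg(\mathscr{D}_{q,\omega}A)\le 1$, so $\deg(C)\le 1$.

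There is one point to check: $\deg(C)=1$ exactly. The leading coefficient of $C$ is that of $\Psi$ minus that of $\mathscr{D}_{q,\omega}A$. I would argue this difference is nonzero by the usual regularity (non-degeneracy) condition for classical weights, since otherwise the moment functional would be degenerate. If this turns out to be delicate, I would simply note that the proof of Corollary~\ref{cor-2ndorder-classical} only uses $\deg(\Theta_n)=0$ and $\deg(L_n)\le 1$, and that these still follow from (\ref{bound deg Thetan})--(\ref{bound deg Ln}).

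Hence (\ref{eq:raw-hyper}) holds, and by (\ref{eq:simp-aux-pearson}) its middle coefficient equals $q\Psi(\overline{x})+\varsigma(x)b_n$.

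\textbf{Step 2: regrouping via invariance.} Insert (\ref{eq:simp-aux-tildeA}), $\tilde A(\overline{x})=A(\overline{x})+q\varsigma(x)\Psi(\overline{x})$, into (\ref{eq:raw-hyper}) and regroup:
\begin{equation*}
A(\overline{x})(\mathscr{D}^2_{q,\omega}P_{n+1})(x)+q\Psi(\overline{x})\big[(\mathscr{D}_{q,\omega}P_{n+1})(x)+\varsigma(x)(\mathscr{D}^2_{q,\omega}P_{n+1})(x)\big]+b_n\big[P_{n+1}(x)+\varsigma(x)(\mathscr{D}_{q,\omega}P_{n+1})(x)\big]=0.
\end{equation*}
Applying (\ref{eq:inv}) to $f=\mathscr{D}_{q,\omega}P_{n+1}$ and to $f=P_{n+1}$, the two brackets become $(\mathscr{D}_{q,\omega}P_{n+1})(\overline{x})$ and $P_{n+1}(\overline{x})$ respectively. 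This gives
\begin{equation*}
A(\overline{x})(\mathscr{D}^2_{q,\omega}P_{n+1})(x)+q\Psi(\overline{x})(\mathscr{D}_{q,\omega}P_{n+1})(\overline{x})+b_nP_{n+1}(\overline{x})=0 .
\end{equation*}

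\textbf{Step 3: shifting back.} Evaluate this identity at $\overline{x}^{\;-1}$ in place of $x$. By (\ref{eq:D2-xbarinv}), the second--order term becomes $A(x)\big(\mathscr{D}_{\frac1q,-\frac{\omega}{q}}\mathscr{D}_{q,\omega}P_{n+1}\big)(x)$. By (\ref{eq:propcomuta}), this equals $qA(x)\big(\mathscr{D}_{q,\omega}\mathscr{D}_{\frac1q,-\frac{\omega}{q}}P_{n+1}\big)(x)$. The other two terms become $q\Psi(x)(\mathscr{D}_{q,\omega}P_{n+1})(x)$ and $b_nP_{n+1}(x)$. Dividing by $q$ yields the stated equation with $\lambda_{n+1}=b_n/q$.

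The main obstacle is the justification in Step~1 that the hypotheses of Corollary~\ref{cor-2ndorder-classical} are met. Steps~2 and~3 are purely algebraic and rest on identities already established in the paper.
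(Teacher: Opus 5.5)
Your argument follows the paper's proof step for step: Lemma~\ref{lemma:ric-sc}, then (\ref{eq:raw-hyper}), then (\ref{eq:simp-aux-pearson})--(\ref{eq:simp-aux-tildeA}) with (\ref{eq:inv}) to reach (\ref{eq:quase-hyper}), then evaluation at $\overline{x}^{\;-1}$ via (\ref{eq:D2-xbarinv}) and (\ref{eq:propcomuta}). Steps~2--3 are correct as written.

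One correction to Step~1: regularity does \emph{not} force $\deg(C)=1$. If $\varrho$ is constant on its lattice, then $\Psi=\mathscr{D}_{q,\omega}A$ and hence $C\equiv 0$. For example, take $\varrho\equiv 1$ on $\{\omega_0+q^k(1-\omega_0)\}_{k\geq 0}$ with $\omega_0<1$ and $A(x)=(x-\omega_0)(x-1)$. This gives a positive definite functional that satisfies every hypothesis of the theorem, with $\deg(\Psi)=1$ but $C\equiv0$. The continuous analogue is Legendre: $A=1-x^2$, $\Psi=-2x$, $C\equiv 0$.

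So your fallback is the argument to keep, and it is sound. With $B\equiv C_2\equiv 0$ and $\deg(C)\le 1$, the bounds (\ref{bound deg Thetan})--(\ref{bound deg Ln}) give $\deg(\Theta_n)\le 0$ and $\deg(L_n)\le 1$. That is all the proof of Corollary~\ref{cor-2ndorder-classical} uses, apart from $\Theta_n\neq 0$, which is needed to cancel $\Theta_n$. No upper bound provides this, but the paper also takes it for granted.

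The paper invokes Corollary~\ref{cor-2ndorder-classical} here without checking $\deg(C)=1$ at all, so on this point your version is the more careful one.
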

\begin{pf}
Under the stated conditions, the  Stieltjes transform of the weight $\varrho$ satisfies (\ref{eq:ric-S-sc}) with $C(x)=\Psi(x)-(\mathscr{D}_{q,\omega}A)(x)$ and $D$, now constant,  given by $D=(\mathscr{D}_{q,\omega}\pi(A;x))(x)-\pi(\Psi;x).$
Thus, equation (\ref{eq:raw-hyper}) holds, from where applying (\ref{eq:simp-aux-pearson}) and (\ref{eq:simp-aux-tildeA}) we have
\begin{equation*}
\left(A(\overline{x})+q\varsigma(x)\Psi(\overline{x})\right)(\mathscr{D}^{2}_{q,\omega}P_{n+1})(x)\\
+\left(q\Psi(\overline{x})+\varsigma(x)b_n\right)(\mathscr{D}_{q,\omega}P_{n+1})(x)
+b_nP_{n+1}(x)=0\,.
\end{equation*}
Using the invariance property (\ref{eq:inv}) for the first and second derivatives  of $P_{n+1}$, we obtain, after  some simplifications,
\begin{equation}
A(\overline{x})(\mathscr{D}^{2}_{q,\omega}P_{n+1})(x)
+q\Psi(\overline{x})(\mathscr{D}_{q,\omega}P_{n+1})(\overline{x})+b_n P_{n+1}(\overline{x})=0\,.\label{eq:quase-hyper}
\end{equation}
 Applying properties (\ref{eq:D2-xbarinv}) and (\ref{eq:propcomuta})  to equation (\ref{eq:quase-hyper}) evaluated at $\overline{x}^{\;-1}$,  we obtain the desired result.
\end{pf}
\begin{rem} The previous theorem for the operator $\mathscr{D}_{q}$ yields the well-known hypergeometric equation for the $q$--classical orthogonal polynomials related to weights satisfying the Pearson equation  \begin{equation*}
\mathscr{D}_{q}(A \varrho)(x)=\Psi(x)\varrho(x)\,,\label{eq:pearson-q}
\end{equation*}
given by
  \begin{equation*}
 A({x})\left(\mathscr{D}_{q}\mathscr{D}_{\frac{1}{q}} P_{n+1}\right)(x)
+\Psi({x})(\mathscr{D}_{q}P_{n+1})({x})+\lambda_{n+1}  P_{n+1}({x})=0\,.\label{eq:hyper-Dq}
\end{equation*}
\end{rem}

\subsection{Fourth--order difference equation for Laguerre--Hahn Ortho\-go\-nal Polynomials}\label{sec:5.2}

We now consider the non--semiclassical case in  equation (\ref{eq:defiRic-S}) with  $B \not \equiv 0$, i.e.,  the corresponding sequence of polynomials $\{P_n\}_{n\geq 0}$ is not semiclassical. Our starting point to deduce the fourth--order difference equation satisfied by these polynomials will be equation (\ref{eq:key3}). The following statement will  play a relevant role.

\begin{lema}
Let $\{P_n\}_{n\geq 0}$ be a SMOP related to a Stieltjes function  $S$  satisfying (\ref{eq:defiRic-S}),
\begin{equation*}
A(\overline{x}) (\mathscr{D}_{q,\omega} S)(x) = B(x)  S(x) S(\overline{x}) + C_1(x) S(x) + C_2(x) S(\overline{x}) + D(x) \,,\
\end{equation*}
where  $A, B, C_1, C_2, D$ are polynomials such that $A\not\equiv 0$ and $B\not\equiv 0$, and let $\{P_n^{(1)}\}_{n\geq 0}$ be the monic sequence of associated polynomials. Then, the following equation takes place,
\begin{equation}
\check{A}_n(\overline{x})(\mathscr{D}_{q,\omega}P^{(1)}_{n-1})(\overline{x})=
e_{n,1}(\overline{x})P^{(1)}_{n-1}(\overline{x})+e_{n,2}(\overline{x})P^{(1)}_{n}(\overline{x})
+g_n(\overline{x};P_{n+1}), \label{eq:auxD1}
\end{equation}
where
 \begin{eqnarray}
&& \check{A}_n({x})=\Theta_n(x)\tilde{A}_2({x})\left[\tilde{A}_1({x})\tilde{A}_2({x})+ \varsigma^2 ({x})B(x)D(x)\right]\,,\label{Acn}\\
&&e_{n,1}({x})=\Theta_n(x)\tilde{A}_2(x)\mu_n(x)+\eta_n(x)\varsigma(x)B(x)\Theta_n(x)\,,\label{e1n}\\
&&e_{n,2}(x)=\varsigma(x)\eta_n(x)B(x)(L_n+C_2)(x)+\tilde{A}_2(x)\left[{\eta_n(x)B(x)}-\Theta_n(x)\tilde{A}_1(x)\frac{\Theta_{n-1}(x)}{\gamma_n} \right],\label{e2n}\\
&&g_n(x;P_{n+1})=\tilde{A}_2(x)g_{n,1}(x;P_{n+1})+\eta_n(x)\varsigma(x)B(x)D(x)P_{n+1}({\overline{x}})\label{gn},
 \end{eqnarray}
with
 \begin{align}
&\mu_n(x)=\tilde{A}_1(x)(\tilde{L}_{n-1}+C_2)(x)- \varsigma (x) B(x)D(x)\,,\label{mun}\\
&\eta_n(x)= \tilde{A}_1(x)D(x)+ \varsigma(x)D(x)(\tilde{L}_{n-1}-C_1)(x)\,,\label{etan}\\
&g_{n,1}(x;P_{n+1})=\eta_n(x)\left[\tilde{A}_1(x)(\mathscr{D}_{q,\omega}P_{n+1})(x)-(L_n-C_1)(x)P_{n+1}(x)\right]\notag\\
&\hspace{6cm}-\Theta_n(x)\varsigma(x)D(x)\frac{\Theta_{n-1}(x)}{\gamma_n}P_{n+1}(x)\notag\,.
 \end{align}
 \end{lema}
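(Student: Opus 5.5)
The identity is a linear elimination. I would first prove it at the point $x$, i.e.
\[
\check{A}_n(x)(\mathscr{D}_{q,\omega}P^{(1)}_{n-1})(x)=e_{n,1}(x)P^{(1)}_{n-1}(x)+e_{n,2}(x)P^{(1)}_{n}(x)+g_n(x;P_{n+1}),
\]
and then replace $x$ by $\overline{x}$, which gives (\ref{eq:auxD1}).

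The idea is to view the four structure relations (\ref{eq:(11)1}), (\ref{eq:(12)1}), (\ref{eq:(21)1}), (\ref{eq:(22)1}) as a linear system in the unknowns
\[
P_n,\quad (\mathscr{D}_{q,\omega}P_n),\quad P^{(1)}_n(\overline{x}),\quad P^{(1)}_{n-1}(\overline{x}),\quad P_n(\overline{x}),\quad (\mathscr{D}_{q,\omega}P^{(1)}_{n-1}),\quad (\mathscr{D}_{q,\omega}P^{(1)}_{n}).
\]
Everything is then expressed through $P^{(1)}_{n-1}(x)$, $P^{(1)}_n(x)$, $P_{n+1}$ and $\mathscr{D}_{q,\omega}P_{n+1}$. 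Throughout, the invariance property (\ref{eq:inv}) is used to trade shifted values for derivatives: $f(\overline{x})=f(x)+\varsigma(x)\mathscr{D}_{q,\omega}f(x)$.

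The steps are as follows.

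(i) Solve (\ref{eq:(11)1}) for $\Theta_nP_n$:
\[
\Theta_nP_n=\tilde{A}_1\mathscr{D}_{q,\omega}P_{n+1}-(L_n-C_1)P_{n+1}+BP^{(1)}_n(\overline{x}).
\]

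(ii) Rewrite (\ref{eq:(21)1}) using $P_n(\overline{x})=P_n+\varsigma\,\mathscr{D}_{q,\omega}P_n$ and $P^{(1)}_{n-1}(\overline{x})=P^{(1)}_{n-1}+\varsigma\,\mathscr{D}_{q,\omega}P^{(1)}_{n-1}$. Then use (\ref{eq:(21)1}) to express $\tilde{A}_1\,\mathscr{D}_{q,\omega}P_n$, and hence $\tilde{A}_1P_n(\overline{x})$, in terms of $P_n$, $P_{n+1}$ and $P^{(1)}_{n-1}(\overline{x})$. Substituting this into (\ref{eq:(22)1}) multiplied by $\tilde{A}_1$ produces the combination
\[
\tilde{A}_1(\tilde{L}_{n-1}-C_1)\,D+\tilde{A}_1D\cdots,
\]
which is where $\eta_n=D\bigl(\tilde{A}_1+\varsigma(\tilde{L}_{n-1}-C_1)\bigr)$ appears. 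Concretely, $\eta_n$ is $D\tilde{A}_1P_n(\overline{x})/P_n$ reorganized.

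(iii) In the resulting identity, replace $P^{(1)}_{n-1}(\overline{x})$ by $P^{(1)}_{n-1}+\varsigma\,\mathscr{D}_{q,\omega}P^{(1)}_{n-1}$. This moves the term $-\varsigma^2BD\,\mathscr{D}_{q,\omega}P^{(1)}_{n-1}$ to the left, which together with $\tilde{A}_1\tilde{A}_2$ explains the bracket $\tilde{A}_1\tilde{A}_2+\varsigma^2BD$ in $\check{A}_n$. The same substitution yields the term $\mu_n=\tilde{A}_1(\tilde{L}_{n-1}+C_2)-\varsigma BD$.

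(iv) Multiply by $\Theta_n$ and substitute step (i) to remove $P_n$. This gives $g_{n,1}$ and the term $\Theta_n\varsigma D\,\tfrac{\Theta_{n-1}}{\gamma_n}P_{n+1}$. The remaining $P^{(1)}_n(\overline{x})$ terms are removed with (\ref{eq:(12)1}) multiplied by $\tilde{A}_2$, after writing
\[
\tilde{A}_2P^{(1)}_n(\overline{x})=\tilde{A}_2P^{(1)}_n+\varsigma\bigl[(L_n+C_2)P^{(1)}_n+\Theta_nP^{(1)}_{n-1}+DP_{n+1}(\overline{x})\bigr].
\]
This is the source of the extra factor $\tilde{A}_2$ in $\check{A}_n$, of the terms $\varsigma\eta_nB(L_n+C_2)$ in $e_{n,2}$ and $\varsigma\eta_nB\Theta_n$ in $e_{n,1}$, and of $\eta_n\varsigma BD\,P_{n+1}(\overline{x})$ in $g_n$.

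The main obstacle is bookkeeping rather than any idea. One must choose the elimination order so that the factors come out exactly as $\Theta_n\tilde{A}_2\bigl(\tilde{A}_1\tilde{A}_2+\varsigma^2BD\bigr)$, and check that the coefficient of $P^{(1)}_n$ collapses to $e_{n,2}$. I would first try the order above: eliminate $P_n$ shifts, then $P_n$, then $P^{(1)}_n(\overline{x})$. If stray factors appear, I would reorganize by grouping terms by $\eta_n$.
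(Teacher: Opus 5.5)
Your proposal is correct and is essentially the paper's proof. The elimination order is the same: invariance for $P_n(\overline{x})$ in (\ref{eq:(22)1}), multiply by $\tilde{A}_1$ and use (\ref{eq:(21)1}), invariance for $P^{(1)}_{n-1}$, multiply by $\Theta_n$ and use (\ref{eq:(11)1}), then multiply by $\tilde{A}_2$ and use (\ref{eq:(12)1}); the only cosmetic difference is that the paper evaluates the structure relations at $\overline{x}$ from the start, whereas you prove the identity at $x$ and shift at the end. One small slip in your step (ii): the substitution $P_n(\overline{x})=P_n+\varsigma\,\mathscr{D}_{q,\omega}P_n$ acts on the term $D\,P_n(\overline{x})$ of (\ref{eq:(22)1}), not on (\ref{eq:(21)1}), which contains no $P_n(\overline{x})$.
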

\begin{pf}
We begin by evaluating equation~(\ref{eq:(22)1}) at $\overline{x}$. Then, using the invariancy property~(\ref{eq:inv}) for $P_{n}(\overline{\overline{x}})$, we obtain
\begin{equation*}
\tilde{A}_2(\overline{x})(\mathscr{D}_{q,\omega}P^{(1)}_{n-1})(\overline{x})=(\tilde{L}_{n-1}+C_2)(\overline{x})P^{(1)}_{n-1}(\overline{x}) -\frac{\Theta_{n-1}(\overline{x})}{\gamma_n}P^{(1)}_{n}(\overline{x})+D(\overline{x})\left(P_{n}(\overline{x})
+\varsigma(\overline{x})(\mathscr{D}_{q,\omega}P_n)(\overline{x})\right).
\end{equation*}
Next, multiplying by $\tilde{A}_1(\overline{x})$, applying formula~(\ref{eq:(21)1}) evaluated at $\overline{x}$, and using  again the invariancy property~(\ref{eq:inv}) for $P_{n-1}^{(1)}(\overline{\overline{x}})$, we have that
\begin{align*}
&\left(\tilde{A}_{1}(\overline{x})\tilde{A}_{2}(\overline{x})+\varsigma^2(\overline{x})B(\overline{x})D(\overline{x})\right)(\mathscr{D}_{q,\omega}P^{(1)}_{n-1})(\overline{x})\\
&=\mu_n(\overline{x})P_{n-1}^{(1)}(\overline{x})-\tilde{A}_{1}(\overline{x})\frac{\Theta_{n-1}(\overline{x})}{\gamma_n}P_{n}^{(1)}(\overline{x})+\eta_n(\overline{x})P_n(\overline{x})-D(\overline{x})\varsigma(\overline{x})\frac{\Theta_{n-1}(\overline{x})}{\gamma_n}P_{n+1}(\overline{x}),
\end{align*}
where $\mu_n(x)$ and $\eta_n(x)$ are defined by~(\ref{mun}) and~(\ref{etan}), respectively. Now, multiplying by $\Theta_n(\overline{x})$ and substituting $\Theta_n(\overline{x})P_n(\overline{x})$ from equation~(\ref{eq:(11)1}), we eliminate $P_n(\overline{x}).$  Then, using (\ref{eq:inv}) for $P_{n}^{(1)}(\overline{\overline{x}})$,  we obtain the following expression
\begin{align*}
&\left(\tilde{A}_{1}(\overline{x})\tilde{A}_{2}(\overline{x})+\varsigma^2(\overline{x})B(\overline{x})D(\overline{x})\right)\Theta_n(\overline{x})(\mathscr{D}_{q,\omega}P^{(1)}_{n-1})(\overline{x})\\
&=\mu_n(\overline{x})\Theta_n(\overline{x})P_{n-1}^{(1)}(\overline{x})+\left(\eta_n(\overline{x})B(\overline{x})-\tilde{A}_1(\overline{x})\frac{\Theta_{n-1}(\overline{x})}{\gamma_n}\Theta_n(\overline{x})\right)P_n^{(1)}(\overline{x})\\
&\,\,+\eta_n(\overline{x})B(\overline{x})\varsigma(\overline{x})(\mathscr{D}_{q,\omega}P^{(1)}_{n})(\overline{x})-\left(\eta_n(\overline{x})(L_n-C_1)(\overline{x})+\varsigma(\overline{x})D(\overline{x})\frac{\Theta_{n-1}(\overline{x})}{\gamma_n}\Theta_n(\overline{x})\right)P_{n+1}(\overline{x})\\
&\,\,+\eta_n(\overline{x})\tilde{A}_{1}(\overline{x})(\mathscr{D}_{q,\omega}P_{n+1})(\overline{x}).
\end{align*}

Finally, multiplying by $\tilde{A}_{2}(\overline{x})$ and applying again~(\ref{eq:(12)1}), the result follows.
\end{pf}

We now have all the necessary results to derive the fourth--order difference equation satisfied by the sequence $\{P_n\}_{n \geq 0}$.
\begin{teo} \label{teo4}
Let $\{P_n\}_{n\geq 0}$ be a SMOP related to a Stieltjes function  $S$  satisfying (\ref{eq:defiRic-S}),
\begin{equation*}
A(\overline{x}) (\mathscr{D}_{q,\omega} S)(x) = B(x)  S(x) S(\overline{x}) + C_1(x) S(x) + C_2(x) S(\overline{x}) + D(x) \,,\
\end{equation*}
where  $A, B, C_1, C_2, D$ are polynomials such that $A\not\equiv 0$ and $B\not\equiv 0$. Then, $\{P_n\}_{n\geq 0}$  satisfies the fourth--order  difference equation
\begin{multline}
\hat{A}_n(x)(\mathscr{D}^{4}_{q,\omega}P_{n+1})(x)+\hat{B}_n(x)(\mathscr{D}^{3}_{q,\omega}P_{n+1})(x)\\
+\hat{C}_n(x)(\mathscr{D}^{2}_{q,\omega}P_{n+1})(x)+\hat{D}_n(x)(\mathscr{D}_{q,\omega}P_{n+1})
(x)+\hat{E}_n(x)P_{n+1}(x)=0,\,~\label{foequation}
\end{multline}
where $\hat{A}_n,\hat{B}_n,\hat{C}_n,\hat{D}_n,\hat{E}_n$  are given explicitly by formulas~(\ref{Ahn})--(\ref{Ehn}) in Appendix~\ref{Appendix}.
\end{teo}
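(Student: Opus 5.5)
The plan is to eliminate the associated polynomials from equation (\ref{eq:key3}) of Theorem \ref{teo3}, using the auxiliary relation (\ref{eq:auxD1}) together with the structure relations for $P^{(1)}_n$. Equation (\ref{eq:key3}) expresses a second-order operator in $P_{n+1}$, namely
\[
\mathcal{L}_n[P_{n+1}]:=\Theta_n\tilde A_1(x)\tilde A_1(\overline{x})\mathscr{D}^2_{q,\omega}P_{n+1}+J_n\mathscr{D}_{q,\omega}P_{n+1}+K_nP_{n+1},
\]
as a combination of three quantities: $P^{(1)}_{n-1}(\overline{x})$, $P^{(1)}_{n}(\overline{x})$ and $(\mathscr{D}_{q,\omega}P^{(1)}_n)(\overline{x})$.

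First, I will remove $(\mathscr{D}_{q,\omega}P^{(1)}_n)(\overline{x})$. I evaluate (\ref{eq:(12)1}) at $\overline{x}$ and use (\ref{eq:inv}) to write $P_{n+1}(\overline{\overline{x}})$ in terms of $P_{n+1}(\overline{x})$ and $(\mathscr{D}_{q,\omega}P_{n+1})(\overline{x})$. After multiplying by $\tilde A_2(\overline{x})$, the right-hand side of (\ref{eq:key3}) becomes a linear combination
\[
R_n = a_n(x)P^{(1)}_{n-1}(\overline{x})+b_n(x)P^{(1)}_n(\overline{x})+g(x;P_{n+1}),
\]
where $g$ collects terms in $P_{n+1}$ and its shifted derivative. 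Those terms in $P_{n+1}(\overline{x})$ and $(\mathscr{D}_{q,\omega}P_{n+1})(\overline{x})$ are converted back to the point $x$ via (\ref{eq:inv}) and its second-order analogue, so they become $\mathscr{D}^j_{q,\omega}P_{n+1}(x)$ with $j\le 2$ (or $j\le3$ at later stages). The result is
\[
\mathcal{E}_0:\quad \mathcal{L}^{(0)}_n[P_{n+1}](x)=a_n(x)P^{(1)}_{n-1}(\overline{x})+b_n(x)P^{(1)}_n(\overline{x}).
\]

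Next, I apply $\mathscr{D}_{q,\omega}$ to $\mathcal{E}_0$, using the product rule and the chain rule $(\mathscr{D}_{q,\omega}f(\overline{x}))(x)=q(\mathscr{D}_{q,\omega}f)(\overline{x})$. This produces $(\mathscr{D}_{q,\omega}P^{(1)}_{n-1})(\overline{x})$ and $(\mathscr{D}_{q,\omega}P^{(1)}_n)(\overline{x})$. I eliminate them using Lemma (\ref{eq:auxD1}), after multiplying by $\check A_n(\overline{x})$, and the $\overline{x}$-evaluated version of (\ref{eq:(12)1}) used before. Every new $P_{n+1}$-term is again rewritten at $x$ through (\ref{eq:inv}). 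This gives a second relation
\[
\mathcal{E}_1:\quad \mathcal{L}^{(1)}_n[P_{n+1}]=a^{(1)}_nP^{(1)}_{n-1}(\overline{x})+b^{(1)}_nP^{(1)}_n(\overline{x}),
\]
where $\mathcal{L}^{(1)}_n$ has order $3$. Differentiating $\mathcal{E}_1$ and repeating the same substitutions gives $\mathcal{E}_2$ with an order-$4$ operator on the left and the same two unknowns on the right.

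The three relations $\mathcal{E}_0,\mathcal{E}_1,\mathcal{E}_2$ are linear in the two unknowns $P^{(1)}_{n-1}(\overline{x})$ and $P^{(1)}_n(\overline{x})$. The augmented $3\times3$ system is compatible, so its determinant vanishes:
\[
\det\begin{bmatrix}\mathcal{L}^{(0)}_n[P_{n+1}]&a_n&b_n\\ \mathcal{L}^{(1)}_n[P_{n+1}]&a^{(1)}_n&b^{(1)}_n\\ \mathcal{L}^{(2)}_n[P_{n+1}]&a^{(2)}_n&b^{(2)}_n\end{bmatrix}=0.
\]
This is the third-order determinant mentioned in the introduction. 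Expanding along the first column gives (\ref{foequation}), and the coefficients $\hat A_n,\dots,\hat E_n$ are read off as the coefficients of $\mathscr{D}^j_{q,\omega}P_{n+1}$.

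The main obstacle is bookkeeping the shifted evaluations: every term at $\overline{x}$ or $\overline{\overline{x}}$ must be correctly converted to the point $x$ using (\ref{eq:inv}), $\varsigma(\overline{x})=q\varsigma(x)$ and the chain rule, so that only $\mathscr{D}^j_{q,\omega}P_{n+1}(x)$ remains. A second, more substantive issue is nontriviality: I must check that the leading coefficient $\hat A_n$ does not vanish identically. I would try to verify this by tracking its top factor, which is essentially $\Theta_n\tilde A_1\tilde A_1(\overline{x})$ times the minor $a^{(1)}_nb_n-a_nb^{(1)}_n$, and arguing that this minor is nonzero because $B\not\equiv0$.
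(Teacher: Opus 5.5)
Your proposal is essentially the paper's proof: your $\mathcal{E}_0,\mathcal{E}_1,\mathcal{E}_2$ are the paper's relations $F_i=G_{i,1}P^{(1)}_{n-1}(\overline{x})+G_{i,2}P^{(1)}_{n}(\overline{x})$, $i=1,2,3$. You obtain them the same way, by first eliminating $(\mathscr{D}_{q,\omega}P^{(1)}_n)(\overline{x})$ with (\ref{eq:(12)1}) at $\overline{x}$, then differentiating and substituting (\ref{eq:auxD1}) and (\ref{eq:(12)1}) after multiplying by $\check{A}_n(\overline{x})\tilde{A}_2(\overline{x})$, and finally expanding the vanishing $3\times 3$ determinant along the $F$-column. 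The non-degeneracy question you raise ($I_3\not\equiv 0$, hence $\hat{A}_n\not\equiv 0$) is not treated in the paper at all, so nothing further is needed to match it; your claim that the minor is nonzero ``because $B\not\equiv 0$'' is unproven and would need a real argument if you want that stronger conclusion.
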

\begin{pf}
Let us write equation (\ref{eq:key3}) as
\begin{equation}
F_0(x;P_{n+1})=U_n(x)P^{(1)}_{n-1}(\overline{x})+V_n(x) P^{(1)}_{n}(\overline{x}) + W_n(x)\left(\mathscr{D}_{q,\omega}P^{(1)}_{n}\right)(\overline{x})\,, \label{eq:4.0}
\end{equation}
where
\begin{align}
F_0(x;P_{n+1})= \Theta_n(x)\tilde{A}_1(x)\tilde{A}_1({\overline{x}})(\mathscr{D}^2_{q,\omega}P_{n+1})(x) +J_n(x)(\mathscr{D}_{q,\omega}P_{n+1})(x)+
K_n(x) P_{n+1}(x)\,,\label{F0}
\end{align}
and
\begin{align*}
U_n(x)&=-B(x)\Theta_{n}(x)\Theta_{n}(\overline{x})\,,\\
V_n(x)&=B(x)[\tilde{A}_1(x)(\mathscr{D}_{q,\omega}\Theta_n)(x)+\Theta_{n}(\overline{x}) (\tilde{L}_{n-1}-C_1)(x)]- \tilde{A}_1(x)\Theta_{n}(x)(\mathscr{D}_{q,\omega}B)(x),\\
W_n(x)&=-q \tilde{A}_1(x)B(\overline{x})\Theta_{n}(x)\,.
\end{align*}

By multiplying equation (\ref{eq:4.0}) by $\tilde{A}_2(\overline{x})$, and using (\ref{eq:(12)1}) evaluated at $\overline{x}$ we eliminate the term $\mathscr{D}_{q,\omega}P^{(1)}_{n}$, and obtain
\begin{multline*}
\tilde{A}_2(\overline{x})F_0(x;P_{n+1})-W_n(x)D(\overline{x})P_{n+1}(\overline{\overline{x}})
=\left\{\tilde{A}_2(\overline{x})U_n(x)+W_n(x)\Theta_n(\overline{x})\right\}P^{(1)}_{n-1}(\overline{x})\\ +\left\{\tilde{A}_2(\overline{x})V_n(x)+W_n(x)(L_n+C_2)(\overline{x})\right\}P^{(1)}_{n}(\overline{x})\,. \label{eq:5}
\end{multline*}

Hence, we have
 \begin{equation}
F_1(x;P_{n+1})=G_{1,1}(x;n)P_{n-1}^{(1)}(\overline{x})+G_{1,2}(x;n)P_{n}^{(1)}(\overline{x})\,, \label{eq:(61)}
\end{equation}
with
\begin{eqnarray}
&&F_1(x;P_{n+1})=\tilde{A}_2(\overline{x})F_0(x;P_{n+1})-W_n(x)D(\overline{x})P_{n+1}(\overline{\overline{x}})\,,\label{F1}\\
&&G_{1,1}(x;n)=\tilde{A}_2(\overline{x})U_n(x)+W_n(x)\Theta_n(\overline{x})\,,\notag\\
&&G_{1,2}(x;n)=\tilde{A}_2(\overline{x})V_n(x)+W_n(x)(L_n+C_2)(\overline{x})\,.\notag
\end{eqnarray}

We now proceed to obtain two more equations in the unknowns  $P_{n-1}^{(1)}(\overline{x})$ and $P_{n}^{(1)}(\overline{x})$.

Taking derivatives in (\ref{eq:(61)}) and multiplying the resulting equation by $\check{A}_n(\overline{x})\tilde{A}_2(\overline{x})$ we obtain, after using  equation (\ref{eq:auxD1}), as well as (\ref{eq:(12)1}) evaluated at $\overline{x}$,
 \begin{equation}
F_2(x;P_{n+1})=G_{2,1}(x;n)P_{n-1}^{(1)}(\overline{x})+G_{2,2}(x;n)P_{n}^{(1)}(\overline{x})\,, \label{eq:(71)}
\end{equation}
with
\begin{align}
F_2(x;P_{n+1})&=\tilde{A}_2(\overline{x})\check{A}_n(\overline{x})\mathscr{D}_{q,\omega}F_1(x;P_{n+1})-q G_{1,1}(\overline{x};n)\tilde{A}_2(\overline{x})g_n(\overline{x};P_{n+1})\notag\\
& -q  G_{1,2}(\overline{x};n)\check{A}_n(\overline{x})D(\overline{x})P_{n+1}(\overline{\overline{x}})\,,\label{F2}\\
G_{2,1}(x;n)&=\tilde{A}_2(\overline{x})\check{A}_n(\overline{x})\mathscr{D}_{q,\omega}G_{1,1}(x;n)
+q G_{1,1}(\overline{x};n)\tilde{A}_2(\overline{x})e_{n,1}(\overline{x})+q G_{1,2}(\overline{x};n)\check{A}_n(\overline{x})\Theta_n(\overline{x})\,,\notag\\
G_{2,2}(x;n)&=\tilde{A}_2(\overline{x})\check{A}_n(\overline{x})\mathscr{D}_{q,\omega}G_{1,2}(x;n)+
q G_{1,1}(\overline{x};n)\tilde{A}_2(\overline{x})e_{n,2}(\overline{x})+q G_{1,2}(\overline{x};n)\check{A}_n(\overline{x})(L_n+C_2)(\overline{x})\,.\notag
\end{align}

Analogously, taking derivatives in (\ref{eq:(71)}) and multiplying the resulting equation by $\check{A}_n(\overline{x})\tilde{A}_2(\overline{x})$ we obtain
\begin{equation}
F_3(x;P_{n+1})=G_{3,1}(x;n)P_{n-1}^{(1)}(\overline{x})+G_{3,2}(x;n)P_{n}^{(1)}(\overline{x})\,, \label{eq:(81)}
\end{equation}
with
\begin{align}
F_3(x;P_{n+1})=&\tilde{A}_2(\overline{x})\check{A}_n(\overline{x})\mathscr{D}_{q,\omega}F_2(x;P_{n+1})-q G_{2,1}(\overline{x};n)\tilde{A}_2(\overline{x})g_n(\overline{x};P_{n+1})\notag\\
&-qG_{2,2}(\overline{x};n)\check{A}_n(\overline{x})D(\overline{x})P_{n+1}(\overline{\overline{x}})\,,\label{F3}\\
G_{3,1}(x;n)=&\tilde{A}_2(\overline{x})\check{A}_n(\overline{x})\mathscr{D}_{q,\omega}G_{2,1}(x;n)
+q G_{2,1}(\overline{x};n)\tilde{A}_2(\overline{x})e_{n,1}(\overline{x})+q G_{2,2}(\overline{x};n)\check{A}_n(\overline{x})\Theta_n(\overline{x})\,,\notag\\
G_{3,2}(x;n)=&\tilde{A}_2(\overline{x})\check{A}_n(\overline{x})\mathscr{D}_{q,\omega}G_{2,2}(x;n)+
q G_{2,1}(\overline{x};n)\tilde{A}_2(\overline{x})e_{n,2}(\overline{x})+q G_{2,2}(\overline{x};n)\check{A}_n(\overline{x})(L_n+C_2)(\overline{x})\,.\notag
\end{align}

Henceforth, we have obtained a system of three equations in two unknowns $P_{n-1}^{(1)}(\overline{x})$ and $P_{n}^{(1)}(\overline{x})$, composed by equations (\ref{eq:(61)}), (\ref{eq:(71)}), (\ref{eq:(81)}), that is,
\begin{equation*}
\begin{cases}
G_{1,1}(x;n)P_{n-1}^{(1)}(\overline{x})+G_{1,2}(x;n)P_{n}^{(1)}(\overline{x})=F_1(x;P_{n+1}),\\
G_{2,1}(x;n)P_{n-1}^{(1)}(\overline{x})+G_{2,2}(x;n)P_{n}^{(1)}(\overline{x})=F_2(x;P_{n+1}),\\
G_{3,1}(x;n)P_{n-1}^{(1)}(\overline{x})+G_{3,2}(x;n)P_{n}^{(1)}(\overline{x})=F_3(x;P_{n+1})\,.
\end{cases}
\end{equation*}

Thus, the fourth--order difference equation is given by the determinant
\begin{equation*}
\det \begin{bmatrix} G_{1,1} & G_{1,2} &F_1(x;P_{n+1})\\
G_{2,1} & G_{2,2} & F_2(x;P_{n+1})\\
G_{3,1} & G_{3,2} & F_3(x;P_{n+1})
\end{bmatrix}=0\,.
\end{equation*}
By expanding the determinant along the third column we get
\begin{equation*}
I_{3}(x;n)F_3(x;P_{n+1})-I_{2}(x;n) F_2(x;P_{n+1})+ I_{1}(x;n) F_1(x;P_{n+1})=0\,,\label{eq:3col-expand}
\end{equation*}
with
$$I_{3}=\det \begin{bmatrix}
G_{1,1} & G_{1,2} \\
G_{2,1} & G_{2,2}
\end{bmatrix}\,, \; I_{2}=\det \begin{bmatrix}
G_{1,1} & G_{1,2} \\
G_{3,1} & G_{3,2}
\end{bmatrix}\,, \; I_{1}=\det \begin{bmatrix}
G_{2,1} & G_{2,2} \\
G_{3,1} & G_{3,2}
\end{bmatrix}\,.
$$
 Finally, to obtain the explicit expressions for the coefficients $\hat{A}_n,\hat{B}_n,\hat{C}_n,\hat{D}_n,\hat{E}_n$ of the fourth--order difference equation, we expand the above equation taking into account that from definitions (\ref{F1}),~(\ref{F2}),~(\ref{F3}), and using~(\ref{eq:inv}),~(\ref{gn}),~(\ref{F0}), then, $F_1$, $F_2$, $F_3$ can be expressed as
\begin{align*}
F_1(x;P_{n+1})=&f_{1,2}(x;n)(\mathscr{D}^2_{q,\omega}P_{n+1})(x)+f_{1,1}(x;n)(\mathscr{D}_{q,\omega}P_{n+1})(x)+f_{1,0}(x;n)P_{n+1}(x),\\
F_2(x;P_{n+1})=&f_{2,3}(x;n)(\mathscr{D}^3_{q,\omega}P_{n+1})(x)+f_{2,2}(x;n)(\mathscr{D}^2_{q,\omega}P_{n+1})(x)+f_{2,1}(x;n)(\mathscr{D}_{q,\omega}P_{n+1})(x)\\
&+f_{2,0}(x;n)P_{n+1}(x),\\
F_3(x;P_{n+1})=&f_{3,4}(x;n)(\mathscr{D}^4_{q,\omega}P_{n+1})(x)+f_{3,3}(x;n)(\mathscr{D}^3_{q,\omega}P_{n+1})(x)+f_{3,2}(x;n)(\mathscr{D}^2_{q,\omega}P_{n+1})(x)\\
&+f_{3,1}(x;n)(\mathscr{D}_{q,\omega}P_{n+1})(x)+f_{3,0}(x;n)P_{n+1}(x),
\end{align*}
where $f_{i,j}$, $i=1,2,3$, $j=0,\dots,i+1$, are given by formulas~(\ref{f10})--(\ref{f34}) in the Appendix.

\end{pf}

\section{Examples} \label{s-examples}

The explicit expressions for the coefficients of the fourth--order difference equation~(\ref{foequation}), can be computed through expressions~(\ref{Ahn})--(\ref{Ehn}) from the Appendix, using any programming language.

In this section, we deduce the coefficients of this equation for three families of orthogonal polynomials corresponding to different cases of the Hahn difference operator. In what follows, the \emph{associated polynomials} will play the role of $P_n$ throughout this paper, rather than that of  $P_n^{(1)}$. The purpose of this is to compute the fourth--order difference equation for these polynomials using Theorem~\ref{teo4}.

We have chosen these families because the computation of the polynomials $\Theta_n$ and $L_n$ is laborious, whereas the aim of this section is to illustrate that the analytical results obtained in previous sections hold for well--known families of orthogonal polynomials. All computations were performed using \ma\, 14.0.

\subsection{Associated Laguerre orthogonal polynomials}
\label{assla}

We consider the monic associated Laguerre orthogonal polynomials which are related to the derivative operator.

First, we recall that for the Laguerre weight $\varrho(x)=\frac{x^\alpha e^{-x}}{\Gamma(\alpha+1)},\, \alpha>-1\,$, the following Pearson equation holds, see~\cite[p. 23]{nik-MIR}
$$(x\varrho(x))'=(-x+\alpha+1)\varrho(x).$$
Notice that $\varrho_0=1$. From Lemma~\ref{lemma:ric-sc}, the Stieltjes function $S$ for the monic Laguerre polynomials satisfies equation~(\ref{eq:ric-S-sc}) with coefficients
\begin{equation*}
A(x)=x,\quad C(x)=-x+\alpha,\quad D(x)=1.
\end{equation*}
Thus, from relations~(\ref{A1rel})--(\ref{D1rel}), taking into account that $B=0$, $C_1=C$, and that the recurrence coefficients in~(\ref{eq:ttrr-Pn}) for the monic Laguerre orthogonal polynomials, see~\cite[p. 38]{nik-MIR}, are given by
\begin{equation*}
\beta_n=2n+\alpha+1,\quad \gamma_n=n(n+\alpha),
\end{equation*}
we deduce that the Stieljes function $S_1$ for the monic associated Laguerre orthogonal polynomials satisfies equation (\ref{eq:defiRic-S}) with coefficients
\begin{equation*}
A^{(1)}(x)=x,\quad B^{(1)}(x)=\alpha+1,\quad C_1^{(1)}(x)=-x+\alpha+2,\quad C_2^{(1)}(x)=0,\quad D^{(1)}(x)=1.
\end{equation*}

As explained in the introduction of this section, to compute the fourth--order differential equation satisfied by the monic associated Laguerre orthogonal polynomials, we will apply Theorem~\ref{teo4} with $P_n$ replaced by $P_n^{(1)}$. Thus, we also need to deduce the polynomials $\Theta_n$ and $L_n$ in formulas (\ref{eq:est-Pn})--(\ref{eq:est-Pn(1)}) with $P_n$ replaced by $P_n^{(1)}$, and consequently $P_n^{(1)}$ replaced by the associated polynomials of $P_n^{(1)}$. To avoid confusion we denote these polynomials by $\breve{\Theta}_n$ and $\breve{L}_n$. Comparing terms in equation (\ref{eq:est-Pn}) with these considerations, we obtain that
\begin{equation*}
\breve{L}_n(x)=-x+n+\alpha+3,\quad \breve{\Theta}_n(x)=(n+2)(n+\alpha+2).
\end{equation*}

Therefore, taking into account formulas~(\ref{foequation}),~(\ref{Ahn})--(\ref{Ehn}), with the role of $P_n$, $A$, $B$, $C_1$, $C_2$, $D$, $\Theta_n$, $L_n$ played by $P_n^{(1)},\,A^{(1)},\, B^{(1)},\,C_1^{(1)},\,C_2^{(1)},\,D^{(1)},\,\breve{\Theta}_n, \breve{L}_n$, respectively, we obtain that  the monic associated Laguerre orthogonal polynomial $P_{n+1}^{(1)}$ satisfies the fourth--order differential equation
\begin{align*}
&-x^2y^{(4)}(x)-5xy^{(3)}(x)+(x^2-2x(n+\alpha+3)+\alpha^2-4)y''(x)\\
&\quad+3(x-n-\alpha-3)y'(x)-(n+3)(n+1)y(x)=0.
\end{align*}

\subsection{Associated Charlier orthogonal polynomials}

We consider the monic associated Charlier orthogonal polynomials, which are related to the forward difference operator.

We begin recalling that for the Charlier weight $\varrho(x)=\frac{e^{-a}a^x}{\Gamma(x+1)},\,a>0$, the following Pearson equation holds, see~\cite[p. 44]{niki-sus-uv}
$$\Delta(x\varrho(x))=(a-x)\varrho(x).$$
Notice that $\varrho_0=1$. Consequently, from Lemma~\ref{lemma:ric-sc}, the Stieltjes function $S$ for the monic Charlier polynomials satisfies equation~(\ref{eq:ric-S-sc}) with coefficients
$$A(x)=x,\quad C(x)=-x+a-1,\quad D(x)=1.$$
Furthermore, the recurrence coefficients in~(\ref{eq:ttrr-Pn}) for the monic Charlier polynomials, see~\cite[p. 44]{niki-sus-uv} are given by
\begin{equation*}
\beta_n=n+a,\quad \gamma_n=n a.
\end{equation*}

Proceeding as in the previous example, with the same considerations and notation, see Section~\ref{assla}, we deduce that the Stieltjes function $S_1$ for the monic associated Charlier orthogonal polynomials  satisfies equation (\ref{eq:defiRic-S}) with coefficients
\begin{equation*}
A^{(1)}(x)=x,\quad B^{(1)}(x)=a,\quad C_1^{(1)}(x)=-x+a-1,\quad C_2^{(1)}(x)=1,\quad D^{(1)}(x)=1,
\end{equation*}
and that the polynomials $\breve{L}_n$, $\breve{\Theta}_n$ introduced in Section~\ref{assla} are given by
\begin{equation*}
\breve{L}_n(x)=-x+a-1,\quad \breve{\Theta}_n(x)=(n+2)a.
\end{equation*}
Therefore, taking into account formulas~(\ref{foequation}),~(\ref{Ahn})--(\ref{Ehn}) with the same considerations as in Section~\ref{assla}, we obtain that the monic associated Charlier orthogonal polynomials $P_{n+1}^{(1)}$ satisfies the fourth--order difference equation
\begin{equation*}
\alpha_{4,n}(x)\Delta^4y(x)+\alpha_{3,n}(x)\Delta^3y(x)+\alpha_{2,n}(x)\Delta^2y(x)+\alpha_{1,n}(x)\Delta y(x)+\alpha_{0,n}y(x)=0,
\end{equation*}
with coefficients
\begin{align*}
{\alpha}_{4,n}(x)&=-a(x+4)(2x-n+2a+1),\\
{\alpha}_{3,n}(x)&= 2 x^3-(3 n+2 a-9)x^2+ \left(n^2-11 n-2 a n-2 a^2-20 a+10\right)x\\
&+3n^2+a n^2-3 a^2 n+5 a n-6 n+2 a^3-23 a^2-12 a+3,\\
{\alpha}_{2,n}(x)&=2 x^3-(3n+2a-15)x^2-\left(n^2+25n+2 a n+2 a^2+16 a-16\right)x\\
&+n^2 (n+7)-15n-3 a^2 n-a n^2-6 a n+2 a^3-23 a^2-17 a+3,\\
{\alpha}_{1,n}(x)&=6x^2-2n(2n+11)x+2n(n^2+3n-7)-4(n+1)(n+3)a-6a^2-6,\\
{\alpha}_{0,n}(x)&=-2x(n+1)(n+3)+(n+3)(n+1)(n-2a-2).\\
\end{align*}

\subsection{Associated little $q$--Laguerre orthogonal polynomials}

We consider the monic associated little $q$--Laguerre orthogonal polynomials, which are related to the Jackson $q-$difference operator $\mathscr{D}_q$.

The monic little $q$--Laguerre polynomials are the polynomials orthogonal with respect to the following inner product, see~\cite{AGMMB-00} and \cite[Sec. 14.20]{koek}
\begin{equation}
\label{INPLQL}
( p,r)=\sum_{k=0}^\infty\frac{(aq)^k(aq;q)_\infty}{(q;q)_k}p(q^k)r(q^k),\quad 0<aq<1,\quad \text{ for every } p,r\in\mathbb{P},
\end{equation}
where the $q-$shifted factorials are defined by~\cite[p. 11]{koek}
\begin{equation*}
(b;q)_0=1,\quad (b;q)_k=\prod_{j=1}^k(1-bq^{j-1}),\quad k\geq1,\quad \text{and}\quad (b;q)_\infty=\prod_{j=1}^\infty(1-bq^{j-1}).
\end{equation*}
Using the $q-$integral, see~\cite[f. (3)]{anaby} and $(q;q)_k=(q;q)_\infty/(q^{k+1};q)_\infty$, the previous inner product~(\ref{INPLQL}) can be written for $a=q^\alpha$, $\alpha>-1$, as
\begin{equation*}
(p,r)=\int_{0}^1p(x)r(x)\varrho(x)d_q(x),
\end{equation*}
with $\displaystyle\varrho(x)=\frac{(q^{\alpha+1};q)_\infty x^\alpha(qx;q)_\infty}{(1-q)(q;q)_\infty}.$ Notice that $\varrho_0=1$, see~\cite[f. (14.20.2)]{koek}. For this weight, the following Pearson equation holds, see~\cite{medem} $$\mathscr{D}_q\left(x(1-x)\varrho(x)\right)=\left(\frac{-x-q^{\alpha+1}+1}{1-q}\right)\varrho(x).$$
Thus, according to Lemma~\ref{lemma:ric-sc}, the Stieltjes function for the little $q-$Laguerre orthogonal polynomials satisfies equation (\ref{eq:defiRic-S}) with coefficients
\begin{equation*}
A(x)=x(1-x),\quad C(x)=\frac{q(-qx-q^\alpha+1)}{1-q},\quad D(x)=\frac{q}{1-q}.
\end{equation*}

On the other hand, the recurrence coefficients in~(\ref{eq:ttrr-Pn}) for the little $q-$Laguerre orthogonal polynomials, see \cite[f. (14.20.3)]{koek} are given by
\begin{align*}
\beta_n=q^{n}(q^\alpha-(q+1)q^{n+\alpha}+1),\quad\gamma_n=q^{2n+\alpha-1}(q^{n+1}-1)(q^{n+\alpha}-1).
\end{align*}

Proceeding as in the example in Section~\ref{assla}, with the same considerations and notation, we deduce that the Stieltjes function $S_1$ for the monic associated little $q$--Laguerre orthogonal polynomials satisfies an equation of type~(\ref{eq:defiRic-S}) with coefficients
\begin{align*}
A^{(1)}(x)&=x(1-x),\quad B^{(1)}(x)=q^{\alpha+2}(-q^{\alpha+1}+1),\quad C_1^{(1)}(x)=-\frac{q}{1-q}(qx+q^{\alpha+1}-1),\\
C_2^{(1)}(x)&=q(-x+q^\alpha),\quad D^{(1)}(x)=\frac{1}{1-q},
\end{align*}
and that the polynomials $\breve{L}_n$ and $\breve{\Theta}_n$ introduced in Section~\ref{assla} are given by
\begin{equation*}
\breve{L}_n(x)=-\frac{q}{1-q}(qx+q^{n+\alpha+2}-1),\quad \breve{\Theta}_n(x)=\frac{q^{n+\alpha+2}(q^{n+2}-1)(q^{n+\alpha+2}-1)}{1-q}.
\end{equation*}

Therefore, from formulas~(\ref{foequation}), (\ref{Ahn})--(\ref{Ehn}), with the same considerations as in Section~\ref{assla}, we obtain that the monic associated little $q-$Laguerre orthogonal polynomial $P_{n+1}^{(1)}$ satisfies the fourth--order difference equation
\begin{equation*}
\alpha_{4,n}(x)\mathscr{D}_q^4y(x)+\alpha_{3,n}(x)\mathscr{D}_q^3y(x)+\alpha_{2,n}(x)\mathscr{D}_q^2y(x)+\alpha_{1,n}(x)\mathscr{D}_q y(x)+\alpha_{0,n}(x)y(x)=0,
\end{equation*}
with coefficients
\begin{align*}
\alpha_{4,n}(x)&=(q-1)^4 x^4 \left(q^{n+2}+1\right) q^{\alpha +2 n+10}-(q-1)^4 x^3 \left((q+1) q^{\alpha
   +n+4}+q^{n+2}+q^{n+4}\right.\\
   &\left.+q^{n+5}+1\right) q^{\alpha +2 n+6}+(q-1)^4 (q+1) x^2 \left(q^{\alpha }+1\right)
   q^{\alpha +3 n+6},\\
\alpha_{3,n}(x)&=(q-1)^3 x^4 \left(q^{n+2}+1\right) q^{n+7}+(q-1)^3 (q+1) x q^{3
   n+3} \left(q^{\alpha }+1\right) \left(-q^{2 \alpha }+q^{\alpha
   +2}+q^{\alpha +3}-1\right)\\
   &+(q-1)^3 x^2 q^{2 n+3} \left(q^{\alpha
   +n+2}-2 q^{\alpha +n+5}-(q (q+2)+2) q^{\alpha +n+6}+q^{2 \alpha
   +n+2}+q^{n+2}+q^{n+3}+q^{n+4}\right.\\
   &\left.-(q+1) \left(q^2+q+1\right) q^{2
   \alpha +n+5}+q^{2 \alpha }+q^{\alpha +1}+q^{\alpha +2}-q^{\alpha
   +3}+(q+1) q^{2 \alpha +1}+q^{\alpha }+q^2+q+1\right)\\
   &+(q-1)^3 x^3
   q^{n+4} \left(-2 q^{n+2}-q^{n+3}-q^{n+4}-q^{2
   n+4}+\left(q^4+q^3-2\right) q^{\alpha
   +n+2}\right.\\
   &\left.+\left(q^4+q^3+q^2+q-1\right) q^{\alpha +2 n+4}-q^{\alpha
   }-1\right),\\
\alpha_{2,n}(x)&=(q-1)^2 q^2 x^3 \left(-q^{n+2}+q^{n+3}+q^{n+4}+q^{n+5}-q^{2
   n+4}+q^{2 n+5}+q^{2 n+6}+q^{2 n+7}-q^{3 n+6}-1\right)\\
   &+(q-1)^2
   (q+1) q^{3 n+1} \left(q^{\alpha }+1\right) \left(q^{\alpha
   +4}-q^{2 \alpha +2}+q^{\alpha }-q^2\right)+(q-1)^2 x^2 q^{n+1}
   \left(q^{\alpha +n+2}+q^{n+2}\right.\\
   &-2 q^{n+4}-4 q^{n+5}-2
   q^{n+6}-q^{n+7}+q^{2 n+4}+q^{2 n+5}-q^{2 n+7}+\left(q
   \left(q^4-3 q-2\right)-1\right) q^{\alpha
   +n+3}\\
   &\left.+\left(\left(q^2+q+2\right) q^4+1\right) q^{\alpha +2
   n+4}-q^3+\left(-q^3+q+1\right) q^{\alpha }+q+1\right)\\
   &+(q-1)^2 x
   q^{2 n+1} \left(-q^{n+2}+q^{n+4}+2 q^{n+5}+q^{n+6}-\left(q^2
   \left(q^3+q-1\right) (q+1)^2+2\right) q^{\alpha +n+2}\right.\\
   &\left.-\left(q
   (q+1) \left(q^2+1\right) \left(q^3+q^2-1\right)+1\right) q^{2
   \alpha +n+2}-q^{2 \alpha }-q^{\alpha +1}+q^{\alpha +2}+3
   q^{\alpha +3}+q^{\alpha +4}\right.\\
   &\left.+(q+1)^2 q^{2 \alpha +2}-2 q^{\alpha
   }+q^4+2 q^3+q^2-1\right),\\
   \alpha_{1,n}(x)&=(q-1) q x^2 \left(q^{n+1}-q^{n+2}+q^{n+4}+q^{n+5}+q^{2 n+3}-q^{2
   n+4}+q^{2 n+6}+q^{2 n+7}-q^{3 n+6}\right.\\
   &\left.-q^{3 n+7}-q-1\right)-(q-1)^2
   \left(q^2+q+1\right) q^{2 n+1} \left(q^{\alpha }+1\right)
   \left(q^{\alpha +n+1}+q^{\alpha +n+3}+q^{\alpha
   +n+4}-q^{n+2}\right.\\
   &\left.-q^{\alpha }-1\right)+(q-1) x q^n \left(-q^{\alpha
   +n+1}-2 q^{\alpha +n+3}-q^{\alpha +n+4}-3 q^{\alpha
   +n+5}-q^{\alpha +n+6}+q^{\alpha +2 n+4}\right.\\
   &\left.+q^{\alpha +2
   n+6}+q^{\alpha +2 n+8}+q^{\alpha +2 n+9}-q^{n+1}-q^{n+3}-3
   q^{n+5}-2 q^{n+6}-q^{n+7}+q^{2 n+4}+q^{2 n+5}\right.\\
   &\left.+2 q^{2
   n+6}+q^{\alpha +1}+2 q^{\alpha +2}+q^{\alpha }+2 q^2+q+1\right),\\
   \alpha_{0,n}(x)&=\left(q^2+1\right) q^n \left(q^{n+1}-1\right)
   \left(q^{n+3}-1\right) \left(q^{\alpha }+1\right)-q x
   \left(q^{n+1}-1\right) \left(q^{n+2}+1\right)
   \left(q^{n+3}-1\right).
\end{align*}

\section*{Acknowledgements}
CRP would like to thank MN for her invitation to do a research stay in University of Beira Interior, Covilhã (Portugal) where this work was initiated.\\
The work of the author CRP is mainly financed by Plan Propio de la Universidad de Almería. Furthermore, the authors JFMM, JJMB and CRP are partially supported by MICIU/AEI/10.13039/501100011033 grant PID2025-170285NB-I00. In addition, the work of JFMM, JJMB and CRP is partially supported by the Research Group FQM-0229 of Universidad de Almer\'{\i}a. The work of MNR is partially supported by CMA-UBI (grants UID-B-MAT/00212/2020 and UID-P-MAT/00212/2020), funded by FCT - Fundação para a Ciência e a Tecnologia.

\newpage
\appendix
\section*{Appendix}
\section{Explicit expressions of the coefficients $\hat{A}_n,\hat{B}_n,\hat{C}_n,\hat{D}_n,\hat{E}_n$}
\label{Appendix}
The coefficients $\hat{A}_n,\hat{B}_n,\hat{C}_n,\hat{D}_n,\hat{E}_n$ of the fourth--order difference equation~(\ref{foequation}) in Theorem~\ref{teo4} can be computed recursively through the following expressions
\begin{align}
\hat{A}_n(x)=&f_{3,4}(x;n)I_3(x;n),\label{Ahn}\\
\hat{B}_n(x)=&-f_{2,3}(x;n)I_2(x;n)+f_{3,3}(x;n)I_3(x;n),\label{Bhn}\\
\hat{C}_n(x)=&f_{1,2}(x;n)I_1(x;n)-f_{2,2}(x;n)I_2(x;n)+f_{3,2}(x;n)I_3(x;n),\label{Chn}\\
\hat{D}_n(x)=&f_{1,1}(x;n)I_1(x;n)-f_{2,1}(x;n)I_2(x;n)+f_{3,1}(x;n)I_3(x;n),\label{Dhn}\\
\hat{E}_n(x)=&f_{1,0}(x;n)I_1(x;n)-f_{2,0}(x;n)I_2(x;n)+f_{3,0}(x;n)I_3(x;n)\label{Ehn},
\end{align}
with $f_{i,j},$ $i=1,2,3$, $j=0,\dots,i+1$, given by
\begin{align}
f_{1,0}(x;n)=&\tilde{A}_2(\overline{x})K_n(x)-W_n(x)D(\overline{x}),\label{f10}\\
f_{1,1}(x;n)=&\tilde{A}_2(\overline{x})J_n(x)-(1+q)\varsigma(x)W_n(x)D(\overline{x}),\label{f11}\\
f_{1,2}(x;n)=&\tilde{A}_2(\overline{x})\Theta_n(x)\tilde{A}_1(x)\tilde{A}_1(\overline{x})-q\varsigma^2(x)W_n(x)D(\overline{x}),\label{f11}\\\notag\\
f_{2,0}(x;n)=&\tilde{A}_2(\overline{x})\check{A}_n(\overline{x})\mathscr{D}_{q,\omega}f_{1,0}(x;n)\notag\\
&+qG_{1,1}(\overline{x};n)(\tilde{A}_2(\overline{x}))^2\left(\eta_n(\overline{x})(L_n-C_1)(\overline{x})+\Theta_n(\overline{x})\varsigma(\overline{x})D(\overline{x})\frac{\Theta_{n-1}(\overline{x})}{\gamma_n}\right)\notag\\
&-qD(\overline{x})\left(G_{1,1}(\overline{x};n)\tilde{A}_2(\overline{x})\eta_n(\overline{x})\varsigma(\overline{x})B(\overline{x})+G_{1,2}(\overline{x};n)\check{A}_n(\overline{x})\right)\label{f20},\\
f_{2,1}(x;n)=&\tilde{A}_2(\overline{x})\check{A}_n(\overline{x})\left(\mathscr{D}_{q,\omega}f_{1,1}(x;n)+f_{1,0}(\overline{x};n)\right)\notag\\
&+q\varsigma(x)(\tilde{A}_2(\overline{x}))^2G_{1,1}(\overline{x};n)\left(\eta_n(\overline{x})(L_n-C_1)(\overline{x})+\Theta_n(\overline{x})\varsigma(\overline{x})D(\overline{x})\frac{\Theta_{n-1}(\overline{x})}{\gamma_n}\right)\notag\\
&-q(1+q)\varsigma(x)D(\overline{x})\left(G_{1,1}(\overline{x};n)\tilde{A}_2(\overline{x})\eta_n(\overline{x})\varsigma(\overline{x})B(\overline{x})+G_{1,2}(\overline{x};n)\check{A}_n(\overline{x})\right)\notag\\
&-qG_{1,1}(\overline{x};n)\tilde{A}_1(\overline{x})(\tilde{A}_2(\overline{x}))^2\eta_n(\overline{x}),\label{f21}\\
f_{2,2}(x;n)=&\tilde{A}_2(\overline{x})\check{A}_n(\overline{x})\left(\mathscr{D}_{q,\omega}f_{1,2}(x;n)+f_{1,1}(\overline{x};n)\right)\notag\\
&-q^2\varsigma^2(x)D(\overline{x})\left(G_{1,1}(\overline{x};n)\tilde{A}_2(\overline{x})\eta_n(\overline{x})\varsigma(\overline{x})B(\overline{x})+G_{1,2}(\overline{x};n)\check{A}_n(\overline{x})\right)\notag\\
&-q\varsigma(x)G_{1,1}(\overline{x};n)\tilde{A}_1(\overline{x})(\tilde{A}_2(\overline{x}))^2\eta_n(\overline{x}),\label{f21}\\
f_{2,3}(x;n)=&\tilde{A}_2(\overline{x})\check{A}_n(\overline{x})f_{1,2}(\overline{x};n),\label{f23}\\\notag\\
f_{3,0}(x;n)=&\tilde{A}_2(\overline{x})\check{A}_n(\overline{x})\mathscr{D}_{q,\omega}f_{2,0}(x;n)\notag\\
&+qG_{2,1}(\overline{x};n)(\tilde{A}_2(\overline{x}))^2\left(\eta_n(\overline{x})(L_n-C_1)(\overline{x})+\Theta_n(\overline{x})\varsigma(\overline{x})D(\overline{x})\frac{\Theta_{n-1}(\overline{x})}{\gamma_n}\right)\notag\\
&-qD(\overline{x})\left(G_{2,1}(\overline{x};n)\tilde{A}_2(\overline{x})\eta_n(\overline{x})\varsigma(\overline{x})B(\overline{x})+G_{2,2}(\overline{x};n)\check{A}_n(\overline{x})\right),\label{f30}\\
f_{3,1}(x;n)=&\tilde{A}_2(\overline{x})\check{A}_n(\overline{x})\left(\mathscr{D}_{q,\omega}f_{2,1}(x;n)+f_{2,0}(\overline{x};n)\right)\notag\\
&+q\varsigma(x)(\tilde{A}_2(\overline{x}))^2G_{2,1}(\overline{x};n)\left(\eta_n(\overline{x})(L_n-C_1)(\overline{x})+\Theta_n(\overline{x})\varsigma(\overline{x})D(\overline{x})\frac{\Theta_{n-1}(\overline{x})}{\gamma_n}\right)\notag\\
&-q(1+q)\varsigma(x)D(\overline{x})\left(G_{2,1}(\overline{x};n)\tilde{A}_2(\overline{x})\eta_n(\overline{x})\varsigma(\overline{x})B(\overline{x})+G_{2,2}(\overline{x};n)\check{A}_n(\overline{x})\right)\notag\\
&-qG_{2,1}(\overline{x};n)\tilde{A}_1(\overline{x})(\tilde{A}_2(\overline{x}))^2\eta_n(\overline{x}),\label{f31}\\
f_{3,2}(x;n)=&\tilde{A}_2(\overline{x})\check{A}_n(\overline{x})\left(\mathscr{D}_{q,\omega}f_{2,2}(x;n)+f_{2,1}(\overline{x};n)\right)\notag\\
&-q^2\varsigma^2(x)D(\overline{x})\left(G_{2,1}(\overline{x};n)\tilde{A}_2(\overline{x})\eta_n(\overline{x})\varsigma(\overline{x})B(\overline{x})+G_{2,2}(\overline{x};n)\check{A}_n(\overline{x})\right)\notag\\
&-q\varsigma(x)G_{2,1}(\overline{x};n)\tilde{A}_1(\overline{x})(\tilde{A}_2(\overline{x}))^2\eta_n(\overline{x}),\label{f32}\\
f_{3,3}(x;n)=&\tilde{A}_2(\overline{x})\check{A}_n(\overline{x})\left(\mathscr{D}_{q,\omega}f_{2,3}(x;n)+f_{2,2}(\overline{x};n)\right),\label{f33}\\
f_{3,4}(x;n)=&\tilde{A}_2(\overline{x})\check{A}_n(\overline{x})f_{2,3}(\overline{x};n),\label{f34}
\end{align}
where
\begin{align*}
U_n(x)&=-B(x)\Theta_{n}(x)\Theta_{n}(\overline{x})\,,\\
V_n(x)&=B(x)[\tilde{A}_1(x)(\mathscr{D}_{q,\omega}\Theta_n)(x)+\Theta_{n}(\overline{x}) (\tilde{L}_{n-1}-C_1)(x)]- \tilde{A}_1(x)\Theta_{n}(x)(\mathscr{D}_{q,\omega}B)(x),\\
W_n(x)&=-q \tilde{A}_1(x)B(\overline{x})\Theta_{n}(x)\,,
\end{align*}
and $I_i$, $i=1,2,3$ are given by
\begin{align*}
I_1(x;n)&=G_{2,1}(x;n)G_{3,2}(x;n)-G_{2,2}(x;n)G_{3,1}(x;n),\\
I_2(x;n)&=G_{1,1}(x;n)G_{3,2}(x;n)-G_{1,2}(x;n)G_{3,1}(x;n),\\
I_3(x;n)&=G_{1,1}(x;n)G_{2,2}(x;n)-G_{1,2}(x;n)G_{2,1}(x;n),\\
\end{align*}
with
\begin{align*}
G_{1,1}(x;n)&=\tilde{A}_2(\overline{x})U_n(x)+W_n(x)\Theta_n(\overline{x})\,,\\
G_{1,2}(x;n)&=\tilde{A}_2(\overline{x})V_n(x)+W_n(x)(L_n+C_2)(\overline{x})\,,\\
G_{2,1}(x;n)&=\tilde{A}_2(\overline{x})\check{A}_n(\overline{x})\mathscr{D}_{q,\omega}G_{1,1}(x;n)
+q G_{1,1}(\overline{x};n)\tilde{A}_2(\overline{x})e_{n,1}(\overline{x})+q G_{1,2}(\overline{x};n)\check{A}_n(\overline{x})\Theta_n(\overline{x})\,,\\
G_{2,2}(x;n)&=\tilde{A}_2(\overline{x})\check{A}_n(\overline{x})\mathscr{D}_{q,\omega}G_{1,2}(x;n)+
q G_{1,1}(\overline{x};n)\tilde{A}_2(\overline{x})e_{n,2}(\overline{x})+q G_{1,2}(\overline{x};n)\check{A}_n(\overline{x})(L_n+C_2)(\overline{x}),\\
G_{3,1}(x;n)&=\tilde{A}_2(\overline{x})\check{A}_n(\overline{x})\mathscr{D}_{q,\omega}G_{2,1}(x;n)
+q G_{2,1}(\overline{x};n)\tilde{A}_2(\overline{x})e_{n,1}(\overline{x})+q G_{2,2}(\overline{x};n)\check{A}_n(\overline{x})\Theta_n(\overline{x})\,,\\
G_{3,2}(x;n)&=\tilde{A}_2(\overline{x})\check{A}_n(\overline{x})\mathscr{D}_{q,\omega}G_{2,2}(x;n)+
q G_{2,1}(\overline{x};n)\tilde{A}_2(\overline{x})e_{n,2}(\overline{x})+q G_{2,2}(\overline{x};n)\check{A}_n(\overline{x})(L_n+C_2)(\overline{x})\,,
\end{align*}
and $C_1,\,C_2,\Theta_n,\,L_n$ are the polynomials given in formulas~(\ref{eq:defiRic-S}), (\ref{eq:est-Pn})--(\ref{eq:est-Pn(1)}), and $\varsigma,\tilde{A}_2,\check{A}_n,e_{n,1},e_{n,2},$ are defined in~(\ref{eq:inv}), (\ref{At12}), (\ref{Acn}), (\ref{e1n}), (\ref{e2n}), respectively.


\begin{thebibliography}{99}

\bibitem{anaby} M.H. Annaby, A.E. Hamza, K.A. Aldwoah, Hahn difference operator and associated Jackson–N\"{o}rlund integrals, J. Optim. Theory Appl. \textbf{154} (2012), 133--153.

\bibitem{AGMMB-00}I. Area, E. Godoy, F. Marcellán, J.J. Moreno–Balcázar, Inner products involving $q$–differences: the little $q$–Laguerre--Sobolev polynomials, J. Comput. Appl. Math. \textbf{118} (2000), 1--22.

\bibitem{buendia} E. Buendia, J.S. Dehesa, F.J. Galvez, The distribution of the zeros of the polynomial eigenfunctions
of ordinary differential operators of arbitrary order, in
\textit{Orthogonal polynomials and their applications}, M. Alfaro et al.
(eds.), Lecture Notes in Mathematics, vol. \textbf{1329}, Springer--Verlag,
Berlin, 1988, 222--235.


\bibitem{chihara} T.S.~Chihara, {\em An Introduction to Orthogonal Polynomials},
Gordon and  Breach, New York, 1978.


\bibitem{dini} J. Dini, {\em Sur les formes lin\'{e}aires et les polyn\^{o}mes
orthogonaux de Laguerre--Hahn}, Th\`ese de doctorat, Univ. Pierre et Marie Curie, Paris, 1988.

\bibitem{everit} W.N. Everitt,  L.L. Littlejohn,  Orthogonal polynomials and
spectral theory: a survey, in \textit{Orthogonal
Polynomials and their applications}, C. Brezinski, L. Gori and A.
Ronveaux (eds.). IMACS Annals on Computing and
Applied Mathematics, J.C. Baltzer A.G. Basel, 1991, 21--55.

\bibitem{Filipuketal} G. Filipuk, J.F. Mañas–Mañas, J.J.  Moreno–Balcázar, C. Rodríguez–Perales, Second–order difference equation for quasi–orthogonal polynomials related to Hahn difference operator, J. Math. Anal. Appl. \textbf{557} (2026), art. 130268.


\bibitem{foupou-2004} M. Foupouagnigni, W. Koepf, A. Ronveaux,  Factorization of fourth--order differential equations for perturbed classical orthogonal polynomials,
J. Comput. Appl. Math. \textbf{162}(2) (2004),  299--326.

\bibitem{fou-paco} M. Foupouagnigni, F. Marcell\'{a}n,  Characterization of the $D_w$--Laguerre--Hahn functionals, J. Difference Equ. Appl. \textbf{8}(8) (2002), 689--717.

\bibitem{foupou-2001} M. Foupouagnigni, A. Ronveaux, M.N. Hounkonnou,  The fourth--order difference equation satisfied by the associated orthogonal polynomials of the $D_{q}$--Laguerre-Hahn class, J. Differ. Equations Appl. \textbf{7}(3) (2001), 445--472.


\bibitem{ghressi-2009} A. Ghressi, L. Kh\'eriji, The symmetrical $H_{q}$--semiclassical orthogonal polynomials of class one, SIGMA
\textbf{5} (2009), art. 76.


\bibitem{ghressi-LH} A. Ghressi, L. Kh\'eriji, M. Tounsi, An Introduction to the $q$--Laguerre--Hahn Orthogonal $q$--Polynomials,
SIGMA Symmetry Integrability Geom. Methods Appl. \textbf{7} (2011), art. 92.


\bibitem{hahn-q} W. Hahn,  \"{U}ber Orthogonalpolynome, die $q$--Differenzengleichungen gen\"{u}gen, Math. Nachr. \textbf{2} (1949), 4--34.


\bibitem{hahn-ing} W. Hahn, On Differential Equations for Orthogonal Polynomials, Funkcial. Ekvac. \textbf{21}(1) (1978), 1--9.


\bibitem{ismail-book} M.E.H. Ismail, {\em Classical and Quantum Orthogonal Polynomials in One Variable, Encyclopedia of Mathematics and Its Applications}, vol. \textbf{98}, Cambridge University Press, Cambridge, 2005.

\bibitem{khal-zelia} M. Khalfallah, P. Maroni, Z. Rocha, On a general method for deriving a fourth--order differential
equation satisfied by Laguerre--Hahn orthogonal polynomials with new results for the class 0 analogous to Hermite, Numer. Algorithms (2026). \url{https://doi.org/10.1007/s11075-026-02447-z}.


\bibitem{kheriji} L. Kh\'eriji, An introduction to the $H_{q}$--semiclassical orthogonal polynomials, Methods Appl. Anal. \textbf{10}(3) (2003), 387--411.

\bibitem{kheriji-mar} L. Kh\'eriji, P. Maroni,  The $H_{q}$--classical orthogonal polynomials, Acta Appl. Math. \textbf{71}(1) (2002), 49--115.

\bibitem{koek} R. Koekoek, P.A. Lesky, R.F. Swarttouw, {\em Hypergeometric Orthogonal Polynomials and Their $q$--Analogues}, Springer Monographs in Mathematics, Springer, Berlin, 2010.

\bibitem{lag} E. Laguerre, Sur la r\'eduction en fractions continues d'une fonction qui satisfait \`a
une \'equation lin\'eaire du premier ordre \`a coefficients rationnels,   Bull. Soc. Math. France \textbf{8} (1880), 21--27.


\bibitem{magnus-LHsnul} A.P. Magnus, Associated Askey--Wilson polynomials as Laguerre--Hahn orthogonal polynomials,  in
\textit{Orthogonal polynomials and their applications}, M. Alfaro et al.
(eds.), Lecture Notes in Mathematics, vol. \textbf{1329}, Springer--Verlag,
Berlin, 1988, 261--278.

\bibitem{magnus-snul-sc} A.P. Magnus,  Special nonuniform lattice (snul) orthogonal
polynomials on discrete dense sets of points, J. Comput. Appl.
Math. \textbf{65}(1--3) (1995), 253--265.

\bibitem{magnus-jcam} A.P. Magnus, Painlev\'e--type differential equations for the recurrence coefficients of
semi--classical orthogonal polynomials,  J. Comput. Appl. Math. \textbf{57}(1--2) (1995), 215--237.

 \bibitem{maroni} {P. Maroni},  Une th\'eorie alg\'ebrique des  polyn\^omes orthogonaux. Application aux polyn\^omes orthogonaux
 semi-classiques, in \textit{Orthogonal Polynomials and their applications}, C. Brezinski, L. Gori and A. Ronveaux (eds.). IMACS Annals on Computing and
Applied Mathematics, J.C. Baltzer A.G. Basel, 1991, 95--130.


\bibitem{medem} J.C. Medem, R. Álvarez--Nodarse, F. Marcellán, On the $q$–polynomials: a distributional study, J. Comput. Appl. Math. \textbf{135} (2001) 157--196.

\bibitem{mar-mejri} P. Maroni, M. Mejri, The symmetric $D_{\omega}$--semi--classical orthogonal polynomials of class one,
Numer. Algorithms \textbf{49}(1-4) (2008), 251--282.

\bibitem{mejri} M. Mejri,  $q$--extension of some symmetrical and semi--classical orthogonal polynomials of class one, Appl.
Anal. Discrete Math. \textbf{3}(1) (2009), 78--87.


\bibitem{niki-sus-uv} A.F. Nikiforov, S.K. Suslov, V.B. Uvarov, {\em{Classical Orthogonal
Polynomials of a Discrete Variable}}, Springer, Berlin, 1991.

\bibitem{nik-MIR} A.F. Nikiforov, V.B. Uvarov, {\em Special Functions of Mathematical Physics: A unified Introduction with Applications},
Birkh\"{a}user, Basel, Boston, 1988.


\bibitem{ormerod-etal} C.M. Ormerod, N.S. Witte, P. J. Forrester, Connection preserving deformations and $q$--semi--classical orthogonal polynomials, Nonlinearity \textbf{24}(9) (2011), 2405–2434.

 \bibitem{2026-Rebocho} M. N. Rebocho, A note on Laguerre-Hahn orthogonal polynomials: from 1984 to 2024, Numer. Algorithms \textbf{102} (2026), 1661–1675.

\bibitem{smaili} N.E. Smaili, {\em{Les polyn\^omes E-semi-classiques de classe z\'ero}}, Th\`ese de doctorat, Univ. Pierre et Marie Curie, Paris, 1987.

\bibitem{stieltjes} T.J. Stieltjes, Recherches sur les fractions continues, Ann. Fat. Sci. Toulouse \textbf{8} (1894),
51--122; Ann Fat. Sci. Toulouse \textbf{9} (1895) Al--47.

\bibitem{szego} {G. Szeg{\H o}}, {\em Orthogonal Polynomials}, Amer. Math. Soc. Colloq. Publ. \textbf{23},  Amer. Math. Soc. Providence, RI, 1975 (fourth edition).


\bibitem{VA-lecnotes} W. Van Assche, {\em Asymptotics for Orthogonal Polynomials}, Lecture Notes in Mathematics,
vol. \textbf{1265}, Springer--Verlag, Berlin, 1987.


\end{thebibliography}
 \end{document}